\newtheorem{theorem}{Theorem}
\newtheorem{lemma}{Lemma}
\newtheorem{definition}{Definition}
\newtheorem{corollary}{Corollary}
\newtheorem{remark}{Remark}
\newcommand{\topa}{\circledcirc_1}
\newcommand{\topb}{\circledcirc_2}
\newcommand{\Hom}{{\mathrm{Hom}}}
\newcommand{\mR}{\mathbb{R}}
\newcommand{\mC}{\mathbb{C}}
\newcommand{\mN}{\mathbb{N}}
\newcommand{\mE}{\mathbb{E}}
\newcommand{\mZ}{\mathbb{Z}}
\newcommand{\mS}{\mathbb{S}}
\newcommand{\mD}{\mathbb{D}}
\newcommand{\mg}{\mathfrak{g}}
\newcommand{\mn}{\mathfrak{n}}
\newcommand{\cD}{\mathcal{D}}
\newcommand{\cH}{\mathcal{H}}
\newcommand{\cF}{\mathcal{F}}
\newcommand{\cP}{\mathcal{P}}
\newcommand{\cK}{\mathcal{K}}
\newcommand{\cV}{\mathcal{V}}
\newcommand{\cU}{\mathcal{U}}
\newcommand{\cA}{\mathcal{A}}
\newcommand{\cJ}{\mathcal{J}}
\newcommand{\osp}{\mathfrak{osp}(m|2n)}
\begin{document}
\title{Joseph-like ideals and harmonic analysis for $\osp$}

\author{K.\ Coulembier\thanks{Ph.D. Fellow of the Research Foundation - Flanders (FWO), E-mail: {\tt Coulembier@cage.ugent.be}}, P. Somberg\thanks{ E-mail: {\tt soucek@karlin.mff.cuni.cz}}  and V. Soucek\thanks{ E-mail: {\tt somberg@karlin.mff.cuni.cz}}}

\date{\small{Department of Mathematical Analysis}\\
\small{Department of Mathematics -- Ghent University\\ Krijgslaan 281, 9000 Gent,
Belgium}\\
}

\maketitle

\begin{abstract}
The Joseph ideal in the universal enveloping algebra ${\fam2 U}({\mathfrak so}(m))$ is the annihilator ideal of the $\mathfrak{so}(m)$-representation on the harmonic functions on $\mR^{m-2}$. 
The Joseph ideal for $\mathfrak{sp}(2n)$ is the annihilator ideal of the Segal-Shale-Weil (metaplectic) representation. Both ideals can be constructed in a unified way from a quadratic relation in the 
tensor algebra $\otimes\mg$ for $\mg$ equal to $\mathfrak{so}(m)$ or $\mathfrak{sp}(2n)$. In this paper we construct 
two analogous ideals in $\otimes\mg$ and $\cU(\mg)$ for $\mg$ the orthosymplectic Lie superalgebra $\osp =\mathfrak{spo}(2n|m)$ and prove that they have unique characterizations that naturally extend the classical case. 
Then we show that these two ideals are the annihilator ideals of respectively the 
$\mathfrak{osp}(m|2n)$-representation on the spherical harmonics on $\mR^{m-2|2n}$ and a generalization of the 
metaplectic representation to $\mathfrak{spo}(2n|m)$. This proves that these ideals are 
reasonable candidates to establish the theory of Joseph-like ideals for Lie superalgebras. We also discuss the relation 
between the Joseph ideal of $\osp$ and the algebra of symmetries of the super conformal Laplace operator, regarded as an intertwining operator between principal series representations for $\osp$.
\end{abstract}

\textbf{MSC 2010 :}   17B35, 16S32, 58C50
\noindent
\textbf{Keywords :} Joseph ideal, orthosymplectic superalgebra, superharmonic functions, 
adjoint representation, tensor product, Cartan product
\tableofcontents

\section{Introduction}

Harmonic analysis on manifolds allows to connect and relate various topics in 
geometrical ana-lysis, algebra and representation theory. The present article attempts
to apply this principle in describing and characterizing various algebraic structures
emerging in harmonic analysis of an intertwinning operator for orthosymplectic Lie 
superalgebras, called the super Laplace operator. 
In particular, we study the annihilator ideal of the representation on its kernel on a big cell in the supersphere $\mS^{m-2|2n}$, regarded as a super homogeneous space for $\osp$. We also 
consider the question of an algebraic structure on the space of differential operators 
preserving the representation in question. This is done by constructing a 
Joseph-like ideal for $\mathfrak{osp}(m|2n)$, generalizing the Joseph ideal of $\mathfrak{so}(m)$,
appearing in conformal geometry. In doing so, we also obtain a second 
Joseph-like ideal in this Lie superalgebra, which is a generalization of the Joseph ideal
for $\mathfrak{sp}(2n)$.

The Joseph ideal for a simple complex {\em Lie algebra} $\mg$, not of type A, is a completely prime primitive two-sided ideal in the universal enveloping algebra $\cU(\mg)$. It can be characterized in several ways, see \cite{MR1631302, MR2106228, Garfinkle, MR0342049, MR0953165}. In particular it is the unique completely prime two-sided ideal such that the associated variety is the closure of the minimal nilpotent coadjoint orbit. It is also the annihilator ideal of the unitarizable representation of $\mg$ which has minimal Gelfand-Kirillov dimension, known as the minimal representation. 

The two prominent examples are the Joseph ideals for $\mathfrak{so}(m)$ and $\mathfrak{sp}(2n)$. For the orthogonal case the corresponding minimal representation is the one realized by the conformal algebra acting on harmonic functions on $\mS^{m-2}$, or its flat submanifold $\mR^{m-2}$, see e.g. \cite{MR1108044, MR0342049, MR2020550} for explicit descriptions in several signatures. This ideal subsequently plays an essential role in the description of the symmetries of the Laplace operator, for which its kernel is exactly this minimal representation, see \cite{MR2180410}. The minimal representation for $\mathfrak{sp}(2n)$ is known as the metaplectic representation, Segal-Shale-Weil representation or the symplectic spinors, see \cite{Hilgert2, MR0400304, MR0137504}. This is a representation of $\mathfrak{sp}(2n)$ on functions on $\mR^n$.

In \cite{MR2369839}, based on abstract considerations in \cite{MR1631302}, a $1$-parameter family of nonhomogeneous quadratic ideals in ${\fam2 U}({\mathfrak g})$ for classical Lie algebras ${\mathfrak g}$ equal to $\mathfrak{so}(m)$, $\mathfrak{sp}(2n)$ and $\mathfrak{sl}(n)$ was studied in 
the framework of deformation theory. A certain special tensor 
emerging in the process established finite codimension 
of the ideal except for one special value of the parameter. 
For this special value, the ideal is identical to the Joseph ideal for $\mathfrak{so}(m)$ and $\mathfrak{sp}(2n)$, which gave another unique characterization of the Joseph ideal. 
One of the motivations for the work in \cite{MR2369839} was the Poincar\'e-Birkhoff-Witt theorem in \cite{MR1631302, som} for quadratic Koszul 
algebras. There, the special tensor responsible for deformation theory of this class 
of algebras, was identified as an obstruction class in a Hochschild cohomology.

For a classical simple {\em Lie superalgebra} ${\mathfrak g}$, the universal enveloping algebra 
${\fam2 U}({\mathfrak g})$ never contains a completely prime primitive ideal, different from the augmentation
ideal, unless ${\mathfrak g}$ is isomorphic to the orthosymplectic Lie superalgebra 
$\mathfrak{osp}(1|2n)$, see \cite{MR1616633}. In spite of the geometric origin of $\mathfrak{osp}(m|2n)$
as a conformal Lie superalgebra on $\mR^{m-2|2n}$, the previous argument implies that there is no direct counterpart of the Joseph ideal of classical Lie algebras.
So one has to relax the demanded properties. In particular it follows that not every characterization of the classical case can be extended to the category of superalgebras. As we will show, the approach of \cite{MR2369839} does extend very naturally to the Lie superalgebra $\mathfrak{osp}(m|2n)$. As a side result we also prove that the obtained ideals are also uniquely characterized by an analogue of the elegant characterization of the Joseph ideal in \cite{Garfinkle}.

The Lie superalgebra $\osp$ has two preferred positive root systems, one is logically identified with the $\osp$-interpretation and the other with the $\mathfrak{spo}(2n|m)$-interpretation of this Lie superalgebra, from now on denoted by $\mg$. Corresponding to these two choices a different application of the procedure in \cite{MR2369839} (or the characterization in \cite{Garfinkle}) emerges, which leads to two different ideals in the universal enveloping algebra $\cU(\mg)$. These two ideals can be identified with generalizations of the Joseph ideals of $\mathfrak{so}(m)$ and $\mathfrak{sp}(2n)$. 

In \cite{OSpHarm, MR2344451} the super Laplace operator on $\mR^{p|2n}$ and its kernel were studied. In the current paper we prove that for $m=p+2$, this kernel constitutes an $\osp$-representation, for which the annihilator ideal in the universal enveloping algebra is exactly the first Joseph-like ideal we constructed, if $m-2n>2$. In \cite{Tensor, MR1132090} a spinor-representation for $\mathfrak{spo}(2n|m)$ was studied which generalizes the metaplectic representation of $\mathfrak{sp}(2n)$ and the spinor representation of $\mathfrak{so}(m)$. In particular it was proven in \cite{Tensor} that these spinor spaces are the unique completely pointed $\mathfrak{spo}(2n|m)$-modules. We will also prove that the corresponding annihilator ideal is exactly the second Joseph-like ideal.

Because these two ideals and their corresponding representations possess these remarkable similarities with the Joseph ideals and minimal representations of $\mathfrak{so}(m)$ and $\mathfrak{sp}(2n)$, we believe this
is a good starting point to study the general concept of Joseph ideals and minimal representations for Lie superalgebras. 
In particular it would also be of interest to construct general theories as in 
\cite{MR1108044, MR1631302, Hilgert2, MR0953165} for Lie superalgebras. For instance there seems to exist an interesting link with (co)adjoint orbits, see \cite{MR2264065}. As will become apparent in the current paper, the representations corresponding to the Joseph ideal for Lie superalgebras can not always be expected to be unitarizable. This implies that also there, other requirements compared to the classical case will need to be considered. We hope that our results help to formulate the appropriate properties of Joseph ideals.
   
The structure of the article looks as follows. In Section 2 we recall all 
conventions used in the subsequent sections. Being geometrically motivated, our 
exposition follows the tensor or abstract index notation.

In Section 3 we focus on the description of the second tensor power of the adjoint representation of the Lie superalgebra $\mathfrak{osp}(m|2n)$. Except for $M=m-2n$ equal to $0$, $1$ or $2$, this tensor product is completely reducible.
We identify the irreducible components as highest weight representations for both positive root systems, and construct projectors on these irreducibles. The two root systems we use, are the ones that only contain one isotropic root in their Dynkin diagram. There is a corresponding choice of which submodule in tensor products has to be identified as the Cartan product. If the ordering is introduced in which the roots of $\mathfrak{so}(m)$ are greater 
than those of $\mathfrak{sp}(2n)$, as in \cite{OSpHarm, Tensor}, the Cartan product resembles the one inside $\mathfrak{so}(m)\otimes\mathfrak{so}(m)$. If the standard choice of positive roots is made, as in \cite{MR1773773, MR051963}, the Cartan product resembles the one in $\mathfrak{sp}(2n)\otimes\mathfrak{sp}(2n)$. Therefore, when the standard root system is considered we will use the notation $\mathfrak{spo}(2n|m)$ rather than $\mathfrak{osp}(m|2n)$.

In Section \ref{secCartan} we investigate the Cartan product of the tensor powers of irreducible representations of a semisimple Lie (super)algebra. We propose a general statement which we prove for the natural representation of $\osp$, the adjoint representation of $\osp$, all star-representations of semisimple Lie superalgebras (see \cite{MR0424886}) and all representations of complex semisimple Lie algebras. As a side result we obtain the proof of a conjecture made in \cite{MR2130630}.

In Sections \ref{ospsec} and \ref{sposec} we define and study the two Joseph-like ideals. Though in the current article (contrary to \cite{MR1631302, som}) we shall 
not study Poincar\'e-Birkhoff-Witt theorems for quadratic superalgebras 
of Koszul type, we construct two elements in the tensor algebra of the orthosymplectic
Lie superalgebra which resemble the ones in \cite{MR2369839}. They are given in Lemma \ref{dimhom1} and Lemma \ref{dimhom2}. The reason and necessity for
the appearance of two, rather than one, such elements comes from the fact that the notion of the Cartan product of $\mg\otimes\mg$ depends on the used positive root system. Based on these tensors, we compute the special values in two $1$-parameter families of 
nonhomogeneous quadratic ideals in $\otimes\osp$ or $\cU(\osp)$, for which the ideals are of infinite codimension, in Theorem \ref{1cases} and Theorem \ref{cases2}. These two obtained ideals will be referred to as the Joseph ideal of $\osp$ and $\mathfrak{spo}(2n|m)$ respectively. Furthermore we extend the unique characterization of the classical Joseph ideal, obtained in \cite{Garfinkle}, to these two ideals in Theorem \ref{Garresult} and Theorem \ref{Garresult2}.

The first ideal is consequently identified with the annihilator ideal for the representation on the space of superharmonic functions, which is the content of Subsection \ref{secreposp}. This generalizes the appearance of the Joseph ideal of $\mathfrak{so}(m)$
as the annihilator ideal of the conformal representation on harmonic functions, see e.g. \cite{MR1108044, MR2180410, MR0342049, MR2020550}. In Subsection \ref{secrepspo}, the super metaplectic representation of \cite{Tensor, MR1132090} is considered. We show that the annihilator ideal of this representation is given by the second type of Joseph-like ideal we constructed. The main results are stated in Theorem \ref{idealrep} and Theorem \ref{JosephKostant}.

 Inspired by classical results on the symmetries of the Laplace operator, see \cite{MR2180410, Higherpowers}, we touch 
analogous questions in the last Section \ref{SymmLapl}. It follows that the quotient of the universal enveloping algebra of $\osp$ with respect to the Joseph-like ideal forms an algebra of symmetries. This also leads to a connection between symmetries and superconformal Killing tensor fields. However, orthosymplectic Lie superalgebras suffer from the lack of existence 
of generalized Bernstein-Gelfand-Gelfand resolutions for finite dimensional modules. This implies that completeness of the constructed symmetries does not follow so we can not draw sharper conclusions at this stage. 
 
It is perhaps worth to remark that the present article should be regarded not only as 
a result in Lie theory and representation theory of Lie superalgebras, but also as a germ 
of general strategy to extend the framework of parabolic geometry (\cite{cs}) or ambient metric 
construction (\cite{fg}) to the realm of curved differentiable supermanifolds with a geometrical
(e.g., a conformal) structure. We hope to develop the present results further in this direction.


\section{Preliminaries and Conventions}
\label{preli}
In this section we introduce a basic collection of conventions with emphasis on the tensor notation approach.

All throughout the paper, the natural numbers are assumed to include $0$, $\mN=\{0,1,2,\cdots\}$. 

The standard basis of the graded complex vector space $V=\mC^{m|2n}$ consists of the vectors $e_a$ for $1\le a\le m+2n$, where $e_a=(0,\cdots,0,1,0,\cdots,0)$ with $1$ at the $a$-th position. The elements $e_a$ with $1\le a\le m$ span $V_0$, and $e_a$ 
with $m<a\le m+2n$ span $V_1$. 
For any $\mZ_2$-graded vector space $V=V_0\oplus V_1$, a vector $u\in V_0\cup V_1$ is called homogeneous and in this case we define 
$|u|=\alpha$ for $u\in V_\alpha$, $\alpha\in\mZ_2$. 
The function
\begin{eqnarray*}
[\cdot]:\{1,2,\cdots,m+2n\}\to\mZ_2, \quad
\text{$[a]=\overline{0}$ if $a\le m$ and $[a]=\overline{1}$ otherwise,}
\end{eqnarray*}
allows to write $|e_a|=[a]$ for all $a$ if $V=\mC^{m|2n}$. In this paper, the summation $\sum_a$ will always stand for $\sum_{a=1}^{m+2n}$.

The endomorphism ring of $\mC^{m|2n}$ is $End(\mC^{m|2n})$, as an associative algebra this is isomorphic to $End(\mC^{m+2n})$, it is a superalgebra with grading induced from $\mC^{m|2n}$. The graded Lie bracket on $End(\mC^{m+2n})$ is given by $[A,B]=A\circ B-(-1)^{|A||B|}B\circ A$ for homogeneous elements $A,B$ and extended by linearity, this makes the vector space $End(\mC^{m|2n})$ into a Lie superalgebra and then it is denoted by $\mathfrak{gl}(m|2n)=\mathfrak{gl}(m|2n;\mC)$. To an element $A\in\mathfrak{gl}(m|2n;\mC)$ we associate a tensor in $\mC^{m|2n}\otimes(\mC^{m|2n})^\ast$ by 
\begin{eqnarray}
\label{tensoraction}
& & Ae_b=\sum_{a}A^{a}{}_be_a,\qquad\mbox{or}
\nonumber \\
& & v^a\mapsto \sum_b A^{a}{}_bv^b,\, \mbox{ for }v=\sum_av^ae_a\in\mC^{m|2n}. 
\end{eqnarray}
To define the subalgebra $\osp =\mathfrak{osp}(m|2n;\mC)\subset\mathfrak{gl}(m|2n;\mC)$, we introduce an orthosymplectic metric $g=g_{ab}\in\mC^{(m+2n)\times(m+2n)}$. This metric is even, i.e. $g_{ab}=0$ if $[a]+[b]=1$, and super symmetric, $g_{ba}=(-1)^{[a][b]}g_{ab}=(-1)^{[a]}g_{ab}$. The metric allows to raise and lower tensor indices by $V_a=\sum_bg_{ab}V^b$ and $V^c=\sum_{a}g^{ac}V_a$ with $g^{ab}=g_{ab}$. 
When considering the orthosymplectic Lie superalgebra $\osp$ we will always assume $m>4$ and $n>1$, as in the classical case for $\mathfrak{so}(m)$ and $\mathfrak{sp}(2n)$, see \cite{MR1108044, MR2369839, MR0342049}.

The orthosymplectic Lie superalgebra can be defined in several ways as the algebra preserving an inner product or element in the tensor space, we restrict to the subsequent definition.
\begin{definition}
The Lie superalgebra $\mathfrak{osp}(m|2n)$ is given by endomorphisms $A\in End(\mC^{m|2n})$ satisfying
\begin{eqnarray*}
\sum_{c}\left(g_{ac}A^{c}{}_b+(-1)^{[b]([a]+[c])}A^{c}{}_ag_{cb}\right)=0.
\end{eqnarray*}
This is equivalent to $A_{ab}=-(-1)^{[a][b]}A_{ba}$ or $A^{ab}=-(-1)^{[a][b]}A^{ba}$.
\end{definition}
This means that, as a graded vector space, $\osp$ is equivalent to the super anti-symmetric tensors in $\mC^{m|2n}\otimes \mC^{m|2n}$ (or equivalently in $\left(\mC^{m|2n}\right)^\ast\otimes \left(\mC^{m|2n}\right)^\ast$).
The trace of a tensor is defined as $\sum_aT_a{}^a=\sum_{a,b}g_{ab}T^{ba}$ and one can calculate
\begin{eqnarray}
\label{traces}
\sum_ag_a{}^a=m-2n=M &\quad\mbox{and}\quad &\sum_{a} A_a{}^a=0 \mbox{ for }A\in\osp .
\end{eqnarray}
The integer $M=m-2n$ will play an important role in the main results of the article.
The Lie superbracket of $A,B\in\mathfrak{osp}(m|2n)$, $C=[A,B]$, can be written in 
tensorial notation as
\begin{eqnarray}
\label{Liebracket}
C^{ab}&=&\sum_{c}\left(A^{a}{}_cB^{cb}-(-1)^{[a][b]}A^b{}_cB^{ca}\right)=\sum_{c}\left(A^{a}{}_{c}B^{cb}+(-1)^{[a]([b]+[c])}A^{b}{}_cB^{ac}\right)
\end{eqnarray}

We use a renormalization of the Killing form, which is not zero if $M=2$, (the case $D(n+1|n)=\mathfrak{osp}(2n+2|2n)$), contrary to the actual Killing form.
\begin{lemma}
\label{Killing}
The Killing bilinear form on $\osp$ in the tensorial notation is proportional to the form
\begin{eqnarray*}
\langle U,V\rangle&=&\sum_{a,b}U_{ab}V^{ba},
\end{eqnarray*}
for $U,V$ $\in\osp$.
\end{lemma}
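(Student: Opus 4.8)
The plan is to avoid computing the supertrace $\mathrm{str}(\mathrm{ad}_U\circ\mathrm{ad}_V)$ directly and instead exploit uniqueness of invariant forms. The first observation is that the candidate form is nothing but the supertrace pairing of the natural representation: after lowering and raising indices with $g$ one has
\[
\langle U,V\rangle=\sum_{a,b}U_{ab}V^{ba}=c_0\sum_{a}(-1)^{[a]}(UV)^{a}{}_{a}=c_0\,\mathrm{str}(UV)
\]
for a nonzero constant $c_0$ coming from the normalization $g^{ab}=g_{ab}$, where $UV$ denotes the composition in $\mathfrak{gl}(m|2n)$. From this identification all the required structural properties are immediate: the form is even because $g$ is even; it is supersymmetric because $\mathrm{str}(UV)=(-1)^{|U||V|}\mathrm{str}(VU)$; it is $\osp$-invariant by cyclicity of the supertrace, i.e. $\mathrm{str}([W,U]V)+(-1)^{|W||U|}\mathrm{str}(U[W,V])=0$; and it is nonzero, indeed nondegenerate, because $g$ identifies $\osp$ with the super-antisymmetric tensors, on which the contraction $\langle\cdot,\cdot\rangle$ is visibly nondegenerate.

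The second ingredient is the Killing form $\kappa$ itself, which is likewise an even, supersymmetric, $\osp$-invariant bilinear form. It is classical (Kac) that a basic classical simple Lie superalgebra carries, up to scalar, a unique invariant even supersymmetric bilinear form; since for the ranges considered $\osp$ is such an algebra, the two forms must be proportional, $\kappa=c\,\langle\cdot,\cdot\rangle$ for some $c\in\mC$. This already proves the statement. To pin down the proportionality constant I would restrict both forms to the subalgebra $\mathfrak{so}(m)\subset\osp$ (or to $\mathfrak{sp}(2n)$), where the classical identity — that the Killing form of $\mathfrak{so}(m)$ equals $(m-2)$ times its natural trace form — is well known, and then track how the $2n$ symplectic directions contribute to the supertrace through their sign $(-1)^{[a]}$. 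This gives $c$ proportional to $M-2=m-2n-2$, so that $\kappa$ degenerates precisely at $M=2$, i.e. for $D(n+1|n)=\mathfrak{osp}(2n+2|2n)$; the whole point of working with $\langle\cdot,\cdot\rangle$ is that it stays nondegenerate there, which is why it is the correct renormalization of $\kappa$.

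The main obstacle, and the reason the uniqueness route is preferable to a brute-force supertrace computation, is the sign bookkeeping. Both the identification of $\langle\cdot,\cdot\rangle$ with $\mathrm{str}(UV)$ and the invariance check hinge on the interplay of the super symmetry $g_{ba}=(-1)^{[a]}g_{ab}$, the defining antisymmetry $U_{ab}=-(-1)^{[a][b]}U_{ba}$, and the sign $(-1)^{[a]}$ in the supertrace, which must be reconciled carefully. A secondary subtlety is the borderline value $M=2$: there the genuine Killing form vanishes identically, so proportionality holds only with $c=0$ and carries no information, and one must argue separately — via the explicit nondegeneracy of $\langle\cdot,\cdot\rangle$ established above — that the renormalized form is still a nonzero invariant pairing. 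For the remaining special values $M=0,1$ the algebra is still simple and the uniqueness argument applies unchanged.
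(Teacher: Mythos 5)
Your proposal is correct and takes essentially the same route as the paper's proof, which likewise verifies invariance of $\langle\cdot,\cdot\rangle$ (there directly from the bracket formula \eqref{Liebracket}, in your case via cyclicity of the supertrace) and then invokes the uniqueness up to scalar of invariant bilinear forms on the simple Lie superalgebra $\osp$, citing Chapter 23 of \cite{MR1773773}. Your additional computation of the proportionality constant $c\propto M-2$ by restricting to $\mathfrak{so}(m)$ goes beyond what the paper proves but is accurate, and it correctly accounts for the paper's preceding remark that the genuine Killing form vanishes exactly for $M=2$, the case $D(n+1|n)=\mathfrak{osp}(2n+2|2n)$.
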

\begin{proof}
The proposed bilinear form is invariant, $\langle [U,W],V\rangle=\langle U,[W,V]\rangle$, as follows from \eqref{Liebracket} and therefore it is proportional to the Killing form, see Chapter 23 in \cite{MR1773773}.
\end{proof}

As in \cite{OSpHarm, Tensor} we will use two different root systems. These are the two systems that contain only one isotropic simple root. We need to make a distinction between $m=2d$ and $m=2d+1$. All roots for $D(d|n)=\mathfrak{osp}(2d|2n)$ are given by $\pm\epsilon_j\pm\epsilon_k$ for $1\le j<k\le d$, $\pm\delta_i\pm\delta_l$ for $1\le i\le l\le n$ and $\pm\epsilon_j\pm\delta_i$ for $1\le j\le d$ and $1\le i\le n$. The Lie superalgebra $B(d|n)=\mathfrak{osp}(2d+1|2n)$ has in addition the roots $\pm\epsilon_j$ and $\pm \delta_i$. The $\delta$'s correspond to the roots for $\mathfrak{sp}(2n)$ and the $\epsilon$'s to the roots for $\mathfrak{so}(m)$.

For the distinguished standard root system, see \cite{MR1773773, MR051963}, the simple positive roots are given by
\begin{eqnarray}
\label{simpleroots2}
\delta_1-\delta_2,\cdots,\delta_{n-1}-\delta_n,\delta_n-\epsilon_1,\epsilon_1-\epsilon_2,\cdots,\epsilon_{d-1}-\epsilon_d,\epsilon_d,\\
\nonumber
\delta_1-\delta_2,\cdots,\delta_{n-1}-\delta_n,\delta_n-\epsilon_1,\epsilon_1-\epsilon_2,\cdots,\epsilon_{d-1}-\epsilon_d,\epsilon_{d-1}+\epsilon_d
\end{eqnarray}
for $B(d|n)=\mathfrak{osp}(2d+1|2n)$ and $D(d|n)=\mathfrak{osp}(2d|2n)$ respectively.

For the non-standard one, see \cite{OSpHarm, Tensor}, the simple positive roots are given by
\begin{eqnarray}
\label{simpleroots}
\epsilon_1-\epsilon_2,\cdots,\epsilon_{d-1}-\epsilon_d,\epsilon_d-\delta_1,\delta_1-\delta_2,\cdots,\delta_{n-1}-\delta_n,\delta_n\\
\nonumber
\epsilon_1-\epsilon_2,\cdots,\epsilon_{d-1}-\epsilon_d,\epsilon_d-\delta_1,\delta_1-\delta_2,\cdots,\delta_{n-1}-\delta_n,2\delta_n
\end{eqnarray}
for $B(d|n)=\mathfrak{osp}(2d+1|2n)$ and $D(d|n)=\mathfrak{osp}(2d|2n)$ respectively. When it is not mentioned explicitly we use the second root system.

An irreducible highest weight representation for $\osp$ will be denoted by $L_{\lambda}^{m|2n}$ if it has highest weight $\lambda$ with respect to the standard root system. The same representation has a different highest weight $\mu$ with respect to the non-standard root system, and 
will be denoted by $K_\mu^{m|2n}$. The highest weights $\mu$ respectively $\lambda$ can be calculated elegantly from each other through the technique 
of odd reflections, see \cite{MR1327543}. The link between the two weights for this specific case is given explicitly in Theorem 3 of \cite{Tensor} for all irreducible finite dimensional highest weight representations.

We also could have defined the orthosymplectic Lie superalgebra by a natural action on $\mC^{2n|m}$, rather than through equation \eqref{tensoraction}, therefore with $\mathfrak{sp}(2n)$ acting on the even part and $\mathfrak{so}(m)$ acting on the odd part of the super vector space. This leads to the Lie superalgebra $\mathfrak{spo}(2n|m)$, corresponding to super symmetric tensors in $\mC^{2n|m}\otimes\mC^{2n|m}$. However, as a Lie superalgebra $\mathfrak{spo}(2n|m)$ is isomorphic to $\mathfrak{osp}(m|2n)$. We will use the notation $\mathfrak{spo}(2n|m)$ when we use the standard root system and $\mathfrak{osp}(m|2n)$ for the non-standard one, since it will become apparent that this is the logical association of root systems and fundamental representations, e.g. in Remark \ref{relativeCartan}. The fundamental representations then become
\begin{eqnarray*}
\mC^{m|2n}\,\cong\, K_{\epsilon_1}^{m|2n}&=&L_{\delta_1}^{m|2n}\,\cong\,\mC^{2n|m}.
\end{eqnarray*}

For a Lie superalgebra $\mathfrak{g}=\mathfrak{g}_{\overline{0}}\oplus\mg_{\overline{1}}$, the tensor product of two $\mathfrak{g}$-modules $\cU$ and $\cV$, $\cU\otimes \cV$ is a $\mg$-module with action given by
\begin{eqnarray*}
X\cdot(U\otimes V)&=& (X\cdot U)\otimes V+(-1)^{|X||U|}U\otimes (X\cdot V),
\end{eqnarray*}
for $X\in\mg_i$, $U\in \cU_j$ with $i,j\in\mZ_2$ and $V\in\cV$. It is a general fact that the tensor power $\cU\otimes\cU$ 
of a $\mg$-representation $\cU$ for a Lie superalgebra $\mg$
decomposes into the super symmetric part 
\begin{eqnarray*}
\cU\odot\cU=\mbox{span}\{X\otimes Y +(-1)^{|X||Y|}Y\otimes X|\,X,Y\in\cU_{i}\mbox{ for }i=\overline{0},\overline{1}\}
\end{eqnarray*}
and super anti-symmetric part 
\begin{eqnarray*}
\cU\wedge\cU=\mbox{span}\{X\otimes Y -(-1)^{|X||Y|}Y\otimes X|\,X,Y\in\cU_{i}\mbox{ for }i=\overline{0},\overline{1}\},
\end{eqnarray*}
which form two subrepresentations and $\cU\otimes\cU=\cU\odot\cU\,\oplus\,\cU\wedge\cU$.

One aim of the present paper is the characterization and properties of an important representation of $\mathfrak{osp}(m|2n)$, 
carried by harmonic
functions on the Riemannian superspace $(\mR^{m-2|2n},h_{ab})$. Therefore we need to repeat some facts related to harmonic analysis on $\mR^{p|2n}$,
which can be found in \cite{OSpHarm, MR2344451}. For a general introduction to supergeometry, see \cite{MR1701597}. In this paper we will always assume $p>2$.
The supervector $\mathbf{x}$ is defined as $\mathbf{x}=(X_1,\cdots,X_{p+2n})$, where the first $p$ variables are ordinary commuting ones and the last $2n$ are anti-commuting variables generating the Grassmann algebra $\Lambda_{2n}$. The commutation relations for both the commuting and the
Grassmann variables are captured in the relation 
$$
X_iX_j=(-1)^{[i][j]}X_jX_i ,
$$ 
where $[\cdot]$ is now understood to be the same function as before but with $m$ replaced by $p$. This defines the polynomial algebra $\cP$ generated by the variables $X_j$. For an orthosymplectic metric $h\in\mR^{(p+2n)\times(p+2n)} $, the super Laplace operator, norm squared and the Euler operator are defined by
\begin{eqnarray}
\label{DRE}
\Delta=\sum_{j,k=1}^{p+2n}\partial_{X_j}h_{jk}\partial_{X_k},\qquad R^2=\sum_{j,k=1}^{p+2n}X_jh^{jk}X_k\quad\mbox{and}\quad \mE=\sum_{j=1}^{p+2n}X_j\partial_{X_j}.
\end{eqnarray}
The operators $\Delta/2$, $R^2/2$ and $\mE+(p-2n)/2$ then generate the Lie algebra $\mathfrak{sl}(2)$. The spherical harmonics on 
$\mR^{p|2n}$ of degree $k$ are defined as
\begin{eqnarray*}
\cH_k&=&\{P\in\cP|\Delta P=0\mbox{ and }\mE P=kP\}.
\end{eqnarray*}
In this paper we consider $\cH=\bigoplus_{k=0}^\infty \cH_k$ as an $\mathfrak{osp}(p+2|2n)$ representation, but first we repeat some properties of $\cH_k$ as an $\mathfrak{osp}(p|2n)$-representation, obtained in \cite{OSpHarm}. The algebra $\mathfrak{osp}(p|2n)$ has a natural realization as differential operators on $\mR^{p|2n}$. This will be written down explicitly later but is an immediate consequence of the fact that 
\begin{eqnarray*}
\cP :=\odot\,\mC^{p|2n}= \bigoplus_{k=0}^\infty \odot^k\,\mC^{p|2n}.
\end{eqnarray*}
Moreover, $R^2$ corresponds to the tensor in $\odot^2\mC^{p|2n}$ given by the metric. The harmonic polynomials are then exactly the traceless tensors. Theorem $5.1$ and $6.1$ in \cite{OSpHarm} lead to the following result.

\begin{theorem}
\label{sphHarm}
If $p-2n\not\in-2\mN$ or $p-2n=-2l$ with $k$ not in the interval $[l+2,2l+2]$, $\cH_k$ is an irreducible $\mathfrak{osp}(p|2n)$ representation, isomorphic to $K^{p|2n}_{k\epsilon_1}$.

If $p-2n=-2l$ and $l+2\le k\le 2l+2$ holds, $\cH_k$ is indecomposable as an $\mathfrak{osp}(p|2n)$-module. It has one submodule $R^{2k-2l-2}\cH_{2+2l-k}\cong K^{p|2n}_{(2+2l-k)\epsilon_1}$ and the quotient $\cH_k/R^{2k-2l-2}\cH_{2+2l-k}$ is isomorphic to 
$K^{p|2n}_{k\epsilon_1}$.
\end{theorem}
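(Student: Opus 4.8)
The plan is to combine the natural $\osp$-action on the polynomial space $\cP$ with the $\mathfrak{sl}(2)$-module theory encoded in Theorems 5.1 and 6.1 of \cite{OSpHarm}. First I would note that $\osp$ acts on $\cP=\odot\,\mC^{p|2n}$ by degree-preserving derivations commuting with both $\Delta$ and $\mE$, because the metric $h_{ab}$ entering $\Delta$ in \eqref{DRE} is precisely the invariant tensor fixed by $\osp$. Hence each $\cH_k=\ker\Delta\cap\{\mE=k\}$ is an $\osp$-submodule, and its highest weight is read off from the null power $(X_1+iX_2)^k$, which is harmonic and carries weight $k\epsilon_1$ for the non-standard root system \eqref{simpleroots}. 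This identifies the relevant irreducible label as $K^{p|2n}_{k\epsilon_1}$.

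In the generic regime ($p-2n\notin-2\mN$, or $p-2n=-2l$ with $k$ outside $[l+2,2l+2]$) I would invoke the Fischer-type decomposition of Theorem 5.1 of \cite{OSpHarm}, under which $\odot^k\mC^{p|2n}=\bigoplus_{j\ge0}R^{2j}\cH_{k-2j}$ and the pair $(\mathfrak{sl}(2),\osp)$ acts as a Howe dual pair on $\cP$. Each $\cH_k$ is then a single isotypic component, and its irreducibility as $K^{p|2n}_{k\epsilon_1}$ is exactly the content of Theorem 6.1 of \cite{OSpHarm}. The only computation needed here is the commutator $[\Delta,R^2]=4\mE+2(p-2n)$, which makes $(\Delta/2,R^2/2,\mE+(p-2n)/2)$ into an $\mathfrak{sl}(2)$-triple.

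The substance lies in the degenerate case $p-2n=-2l$, $l+2\le k\le2l+2$, where the decomposition fails and a higher power of $R^2$ creates a new harmonic. For $h_j\in\cH_j$ a short induction from $[\Delta,R^2]=4\mE+2(p-2n)$ gives
\begin{eqnarray*}
\Delta\,R^{2s}h_j&=&4s\left(j+s-1+\frac{p-2n}{2}\right)R^{2s-2}h_j,
\end{eqnarray*}
so that $R^{2s}h_j$ is harmonic exactly when $s=l-j+1$. Taking $j=2+2l-k$ yields $s=k-l-1$, hence $R^{2k-2l-2}\cH_{2+2l-k}\subset\cH_k$, and the two inequalities $j\ge0$ and $s\ge1$ are precisely $k\le2l+2$ and $k\ge l+2$. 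Since no intermediate coefficient $4t(j+t-1-l)$ vanishes for $1\le t<s$, multiplication by $R^{2k-2l-2}$ is injective, so the submodule is isomorphic to $\cH_{2+2l-k}$; as $2+2l-k\le l$ lies outside the critical interval, $\cH_{2+2l-k}\cong K^{p|2n}_{(2+2l-k)\epsilon_1}$ is irreducible by the first part.

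The hard part will be upgrading this to indecomposability and \emph{uniqueness} of the submodule, together with the identification of the quotient. I would do this by transporting structure across the Howe duality: in the atypical block the relevant $\mathfrak{sl}(2)$-module is a non-split extension, and Theorem 6.1 of \cite{OSpHarm} forces $\cH_k$ to carry exactly the two-step filtration with factors $K^{p|2n}_{(2+2l-k)\epsilon_1}$ (the submodule above) and $K^{p|2n}_{k\epsilon_1}$ (the quotient). Non-splitting of the $\mathfrak{sl}(2)$-extension then prevents $\cH_k$ from decomposing as a direct sum, since a splitting of $\cH_k$ would pull back to a splitting on the polynomial side, a contradiction. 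The main obstacle throughout is precisely this passage from ``a submodule exists'' to ``the module is indecomposable with these composition factors,'' which is where the external structural input from \cite{OSpHarm} is indispensable.
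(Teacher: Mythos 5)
Your proposal follows essentially the same route as the paper, which offers no argument of its own for this statement: it simply records it as a consequence of Theorems 5.1 and 6.1 of \cite{OSpHarm} (``Theorem 5.1 and 6.1 in \cite{OSpHarm} lead to the following result''). Your reconstruction of the computational skeleton is sound: the identity $\Delta R^{2s}h_j=4s\left(j+s-1+\frac{p-2n}{2}\right)R^{2s-2}h_j$ does follow from $[\Delta,R^2]=4\mE+2(p-2n)$ exactly as you say, and the arithmetic locating the embedded module ($s=k-l-1$ for $j=2+2l-k$, the two inequalities reproducing the interval $[l+2,2l+2]$, and $2+2l-k\le l$ falling outside that interval so the submodule is irreducible by the generic case) is correct. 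Deferring the generic irreducibility and the indecomposability/uniqueness statements to \cite{OSpHarm} is precisely what the paper itself does, so that reliance is not a defect relative to the paper's proof.

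One local step is wrongly justified, however. You claim multiplication by $R^{2k-2l-2}$ is injective ``since no intermediate coefficient $4t(j+t-1-l)$ vanishes for $1\le t<s$.'' That non-vanishing only shows $R^{2t}h_j$ is not harmonic for $t<s$; it cannot yield injectivity, because the natural inverting operator $\Delta^s$ annihilates $R^{2s}\cH_j$ outright --- indeed the top coefficient $4s(j+s-1-l)$ vanishes by your own choice of $s$, so already $\Delta R^{2s}h_j=0$ and the chain of intermediate coefficients is never brought into play. The correct and elementary repair is that $R^2$ is a non-zero-divisor on $\cP$: its purely even part $r^2$ is a nonzero polynomial in the commuting variables (recall $p>2$), and $\cP$ is free as a module over $\mR[X_1,\dots,X_p]$, so filtering $P$ by Grassmann degree, $R^2P=0$ forces the lowest Grassmann-degree component of $P$ to be killed by $r^2$, hence to vanish. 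Since $R^2$ is $\osp$-invariant, $h\mapsto R^{2k-2l-2}h$ is then an injective $\osp$-module map and the submodule is isomorphic to $\cH_{2+2l-k}\cong K^{p|2n}_{(2+2l-k)\epsilon_1}$ as claimed. With that substitution your proposal is a faithful, more detailed expansion of the paper's citation-level proof.
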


We recall some further results on $\odot\mC^{p|2n}$ obtained in \cite{OSpHarm}. If $p-2n\not\in-2\mN$, the decomposition
\begin{eqnarray}
\label{tensortracedecomp}
\odot^k\mC^{p|2n}\,&\cong&\, \odot_0^k\mC^{p|2n}\,\,\oplus\,\,\odot^{k-2}\mC^{p|2n}
\end{eqnarray}
holds with $\cH_k\cong \odot_0^k\mC^{p|2n}$ the traceless tensors. These traceless tensors can also be written as $ \circledcirc^k\mC^{p|2n}$, which denotes the Cartan product, see discussions in Sections \ref{sectenpow} and \ref{secCartan}. The embedding $\odot^{k-2}\mC^{p|2n}\hookrightarrow \odot^{k}\mC^{p|2n}$ is given by tensorial multiplication with the metric tensor in $\odot^2\mC^{p|2n}$ follows by supersymmetrization. In particular $\cH_k\cong \odot^k\mC^{p|2n}/\odot^{k-2}\mC^{p|2n}$ holds.

As already follows from Theorem \ref{sphHarm}, when $p-2n\in-2\mN$ these properties do not hold for every degree $k$. As mentioned above the representation $\cH_k\subset\odot^k\mC^{p|2n}$ of traceless tensors is not always irreducible (but still always indecomposable). The quotient representation $\odot^k\mC^{p|2n}/\odot^{k-2}\mC^{p|2n}$ is reducible for the same values $\cH_k$ is, see Theorem $5.2$ in \cite{OSpHarm}. But, for these values it also holds that $\cH_k$ is not isomorphic as an $\mathfrak{osp}(p|2n)$-representation to $\odot^k\mC^{p|2n}/\odot^{k-2}\mC^{p|2n}$. In fact the representation $\odot^k\mC^{p|2n}/\odot^{k-2}\mC^{p|2n}$ is the dual of $\cH_k$

Summarizing, in case $p-2n\in-2\mN$, the notion of traceless symmetric tensors does not necessarily correspond to the quotient of symmetric tensors with respect to symmetric tensors containing a metric term.


\section{The second tensor power of the adjoint representation}

\label{sectenpow}

The adjoint action of $\osp$ on itself is given by the Lie superbracket. Since $\osp$ is a simple Lie superalgebra, 
the adjoint representation is irreducible and it has highest weight $\epsilon_1+\epsilon_2$:
\begin{eqnarray*}
\mathfrak{osp}(m|2n)\cong K^{m|2n}_{\epsilon_1+\epsilon_2}=L^{m|2n}_{2\delta_1}.
\end{eqnarray*}
In this section we will study the second tensor power of this representation. The main result is given below.
\begin{theorem}
\label{decomposition}
Consider $\mathfrak{g}=\mathfrak{osp}(m|2n;\mC)$, with $m-2n\not\in\{ 0,1,2\}$. The second tensor power of the adjoint representation decomposes into irreducible pieces as
\begin{eqnarray*}
\mathfrak{g}\otimes\mathfrak{g}&\cong&K^{m|2n}_{2\epsilon_1+2\epsilon_2}\oplus K^{m|2n}_{2\epsilon_1+\epsilon_2+\epsilon_3}\oplus K^{m|2n}_{2\epsilon_1}\oplus K^{m|2n}_{\epsilon_1+\epsilon_2+\epsilon_3+\epsilon_4}\oplus K^{m|2n}_{\epsilon_1+\epsilon_2}\oplus K^{m|2n}_0\\
&\cong&L^{m|2n}_{2\delta_1+2\delta_2}\oplus \, \,\, L^{m|2n}_{3\delta_1+\delta_2}\,\,\oplus\,\, L^{m|2n}_{\delta_1+\delta_2}\,\,\oplus\,\, L^{m|2n}_{4\delta_1}\quad\oplus\quad  L^{m|2n}_{2\delta_1}\,\,\oplus \,\, L^{m|2n}_0,
\end{eqnarray*}
where the identical representations are underneath each other. The supersymmetric tensor power $\mg\odot\mg$ corresponds to the first, third, fourth and sixth representation.
\end{theorem}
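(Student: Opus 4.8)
The plan is to carry out the whole decomposition inside $\left(\mC^{m|2n}\right)^{\otimes 4}$. As recalled in Section \ref{preli}, the adjoint module $\mg\cong K^{m|2n}_{\epsilon_1+\epsilon_2}$ is, as a graded $\mg$-module, the space of super anti-symmetric two-tensors $U^{ab}=-(-1)^{[a][b]}U^{ba}$; hence $\mg\otimes\mg$ embeds $\mg$-equivariantly into the four-tensors $T^{abcd}$ that are super anti-symmetric in $(a,b)$ and in $(c,d)$. The flip of the two tensor legs of $\mg$ amounts to exchanging the pair $(a,b)$ with $(c,d)$, and its $\pm 1$-eigenspaces are exactly the summands $\mg\odot\mg$ and $\mg\wedge\mg$ of Section \ref{preli}; since this flip is a module map I would treat the two summands separately. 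Throughout I keep track of only two invariant operations on such tensors: the super (anti)symmetrisers of the four slots, and contractions with the orthosymplectic metric $g_{ab}$, both of which commute with the action — the former by the Koszul-sign rule, the latter because $g$ is $\mg$-invariant (cf. \eqref{traces}).

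Next I would produce explicit equivariant projectors onto the six candidate summands by composing these two operations. The metric contractions peel off the two lower pieces: the double contraction $\sum_{a,b,c,d} g_{ab}g_{cd}T^{abcd}$ is the trivial module $K^{m|2n}_0$, while a single cross-pair contraction such as $\sum_{b,c} g_{bc}T^{abcd}$ produces a super anti-symmetric two-tensor, i.e. a copy of the adjoint $K^{m|2n}_{\epsilon_1+\epsilon_2}$. The sign bookkeeping places the scalar in $\mg\odot\mg$ and the adjoint in $\mg\wedge\mg$, matching the stated positions (consistently with the Killing form of Lemma \ref{Killing} being symmetric and the bracket \eqref{Liebracket} being anti-symmetric). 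On the totally trace-free complement the four remaining modules are separated purely by the symmetry type of the four indices, in exact parallel with the window, hook, column and symmetric Young symmetries $(2,2),(2,1,1),(1,1,1,1),(2)$ that govern the classical decomposition of $\mathfrak{so}(m)\otimes\mathfrak{so}(m)$. The essential bookkeeping step is to normalise these projectors and check that they are idempotent, pairwise orthogonal, and sum to the identity; the normalising denominators are polynomials in $M=m-2n$ and its small shifts, and this is precisely where the hypothesis $M\notin\{0,1,2\}$ enters, keeping every denominator invertible.

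To match each image with its advertised label I would exhibit an explicit singular vector for the non-standard positive system \eqref{simpleroots}. The Cartan piece is generated by the super-symmetrisation of $(e_1\wedge e_2)\otimes(e_1\wedge e_2)$, of weight $2\epsilon_1+2\epsilon_2$; suitable products built from $e_1,e_2,e_3,e_4$ supply the singular vectors of weights $2\epsilon_1+\epsilon_2+\epsilon_3$, $\epsilon_1+\epsilon_2+\epsilon_3+\epsilon_4$ and $2\epsilon_1$, and the corresponding $L^{m|2n}$-labels follow from the odd-reflection dictionary of Theorem 3 in \cite{Tensor}. That each such vector is annihilated by the simple raising operators is a direct check from \eqref{Liebracket}. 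For completeness and irreducibility I would evaluate the quadratic Casimir (built from \eqref{Liebracket} and the form of Lemma \ref{Killing}): on $K^{m|2n}_\mu$ it acts by a fixed quadratic expression in $\mu$, and under $M\notin\{0,1,2\}$ the six candidate weights yield Casimir scalars that are either pairwise distinct or else separated by the $\mg\odot\mg$/$\mg\wedge\mg$ split, forcing the six images to be non-isomorphic direct summands; irreducibility of each trace-free piece then reduces to the fact that a super-Young-symmetrised, trace-free tensor space is already irreducible over $\mathfrak{gl}(m|2n)$.

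The hard part, as always for Lie superalgebras, is that complete reducibility is not automatic: I expect the real work to be in ruling out that two of the six pieces glue into a non-split indecomposable, so that the claimed decomposition is a genuine direct sum rather than merely a composition series. Conceptually this is the super Schur--Weil/Brauer phenomenon — the $\mg$-invariants in $\left(\mC^{m|2n}\right)^{\otimes 4}$ are governed by the Brauer algebra with loop parameter $M$, which is semisimple away from small integer values of $M$ — so the genericity assumption $M\notin\{0,1,2\}$ is exactly what should simultaneously guarantee the invertibility of the projector denominators and the distinctness of the Casimir scalars that together forbid such gluing. Accordingly I would spend most of the effort on the trace-and-contraction combinatorics that produce the sharp denominators in $M$, since these both certify the projectors and identify the excluded values of $M$.
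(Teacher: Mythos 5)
Your overall architecture (realize $\mg\otimes\mg$ inside $\otimes^4\mC^{m|2n}$, split off traces, then split by symmetry type) is close in spirit to the paper's Lemmas \ref{embeddingCC}--\ref{subA} and the symmetrizer \eqref{FormCartan2}, but the step that actually carries the theorem in the super setting — complete reducibility and irreducibility of the six pieces — rests on two mechanisms that fail. First, the Casimir-separation claim is false on the stated range: the Casimir scalar of $K^{m|2n}_\lambda$, $\lambda=\sum_j\lambda_j\epsilon_j$, is proportional to $\sum_j\lambda_j(\lambda_j+M-2j)$, giving on the six candidate weights the values $4M-4$, $4M-8$, $2M$, $4M-16$, $2M-4$, $0$. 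Inside $\mg\odot\mg$ one has $2M=4M-16$ at $M=8$ and $4M-16=0$ at $M=4$, both permitted by $M\notin\{0,1,2\}$ (e.g.\ $\mathfrak{osp}(12|4)$ and $\mathfrak{osp}(8|4)$; classically this is the familiar $\mathfrak{so}(8)$ coincidence of the three $35$-dimensional constituents). So the Casimir, even combined with the $\odot/\wedge$ parity, cannot forbid gluing precisely in cases the theorem covers. Second, the Brauer heuristic does not close this hole: the theorem holds for all negative integers $M$, where $B_4(M)$ lies in the non-semisimple window of integer parameters, and in any case semisimplicity of the centralizer algebra $\mathrm{End}_\mg\bigl(\otimes^4\mC^{m|2n}\bigr)$ would not imply semisimplicity of the module (indecomposable non-irreducible modules with scalar endomorphism ring exist; the indecomposable traceless spaces $\cH_k$ of Theorem \ref{sphHarm} also show that your fallback — that trace-free super-Young-symmetrized spaces are automatically irreducible — is unsound over $\osp$, since tracelessness is an $\osp$-notion, not a $\mathfrak{gl}(m|2n)$-one). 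The paper replaces all of this by an idea your proposal has no substitute for: Lemma \ref{maxvectors} bounds the number of highest weight vectors of $\mg\otimes\mg$ by six through a direct analysis of singular vectors, six nonzero invariant summands are exhibited via the trace map $p$, the embedding $\phi$ and the projector $q$, and Corollary 1 of \cite{Tensor} then converts ``at least six summands, at most six highest weight vectors'' into complete reducibility together with irreducibility of each piece.

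There is also a concrete bookkeeping error in your projector scheme. The single contraction $\sum_{b,c}g_{bc}T^{abcd}$ is \emph{not} super anti-symmetric: it is the full trace map $p$ of Lemma \ref{subA}, whose image is all of $\mC^{m|2n}\otimes\mC^{m|2n}\cong K^{m|2n}_{2\epsilon_1}\oplus K^{m|2n}_{\epsilon_1+\epsilon_2}\oplus K^{m|2n}_0$. Consequently $K^{m|2n}_{2\epsilon_1}$ is a trace-type constituent (it lives in the image of $\phi$, built from $g\otimes A$), and the totally trace-free complement $\cA$ contains only \emph{three} pieces, of symmetry types $(2,2)$, $(2,1,1)$, $(1^4)$ — not the four you list. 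A projector system with two trace pieces plus four trace-free symmetry types, including $(2)$, cannot sum to the identity, so your completeness check would fail as set up. Finally, note that the genericity heuristic misreads $M=0$: by \eqref{pphi}--\eqref{tracepphi} the trace projections involve the factors $(M-2)$ and $(M-1)$ only, and the paper shows the trace splitting survives at $M=0$; what breaks there is instead the splitting of $\mC^{m|2n}\odot\mC^{m|2n}$ itself when $m=2n$ (cf.\ Theorem \ref{cases012}), so the three excluded values do not all arise as vanishing projector denominators.
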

In this theorem we assumed that $m>7$ (when considering the second root system), the corresponding result for $5\le m\le 7$ can be obtained by replacing the weights by the corresponding ones in the subsequent Lemma \ref{maxvectors}.

The remainder of this section is dedicated to proving this theorem and studying the cases where the second tensor power is not completely reducible, for $M=m-2n=0,1,2$. While proving this, we also obtain useful information on and tensorial expressions for the submodules of $\mg\otimes\mg$.

\begin{lemma}
\label{maxvectors}
For $\mg=\osp$, the $\mg$-module $\mg\otimes \mg$ has at most 6 highest weight vectors, the 6 possible weights are
\begin{eqnarray*}
&&4\delta_1,\quad3\delta_1+\delta_2,\quad2\delta_1+2\delta_2,\quad2\delta_1,\quad\delta_1+\delta_2,\,\, 0\\
&&2\epsilon_1+2\epsilon_2,\quad 2\epsilon_1+\epsilon_2+\epsilon_3,\quad 2\epsilon_1,\quad \epsilon_1+\epsilon_2+\epsilon_3+\epsilon_4,\quad \epsilon_1+\epsilon_2,\,\, 0\\
&&2\epsilon_1+2\epsilon_2,\quad 2\epsilon_1+\epsilon_2+\epsilon_3,\quad 2\epsilon_1,\quad \epsilon_1+\epsilon_2+\epsilon_3+\delta_1,\quad \epsilon_1+\epsilon_2,\,\, 0\\
&&2\epsilon_1+2\epsilon_2,\quad 2\epsilon_1+\epsilon_2+\delta_1,\quad 2\epsilon_1,\quad \epsilon_1+\epsilon_2+2\delta_1,\quad \epsilon_1+\epsilon_2,\,\,  0.
\end{eqnarray*}
respectively for the standard root system, the non-standard root system if $m>7$, $m=6,7$ or $m=5$.
\end{lemma}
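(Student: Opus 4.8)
The plan is to exploit the tensor realisation of the adjoint representation and to reduce the count of highest weight vectors to a purely combinatorial trace-reduction. Since $\osp$ is, as a $\mg$-module, the super anti-symmetric square $V\wedge V$ of the natural module $V=\mC^{m|2n}$, the second tensor power embeds as $\mg\otimes\mg\hookrightarrow V^{\otimes 4}$, carrying the super anti-symmetry separately in the pairs of slots $(1,2)$ and $(3,4)$. The number of highest weight vectors is exactly $\dim(\mg\otimes\mg)^{\mn^{+}}$, and I would bound this by first decomposing over the larger algebra $\mathfrak{gl}(m|2n)$ and only then restricting to $\osp$.

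First I would decompose $(V\wedge V)\otimes(V\wedge V)$ as a $\mathfrak{gl}(m|2n)$-module. Because the combinatorics of (super) Schur functors is insensitive to the $\mZ_2$-grading, the plethysm is formally the classical one and produces exactly three irreducible super Schur components, of shapes $(2,2)$, $(2,1,1)$ and $(1,1,1,1)$; the first and third lie in $\mg\odot\mg$ and the second in $\mg\wedge\mg$. Each is a highest weight $\mathfrak{gl}(m|2n)$-module whose highest weight is read off from $\epsilon_1+\epsilon_2$ directly, e.g.\ $2\epsilon_1+2\epsilon_2$ for the shape $(2,2)$ and $\epsilon_1+\epsilon_2+\epsilon_3+\epsilon_4$ for $(1,1,1,1)$.

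Next I would restrict to $\osp$ by iterated contraction with the orthosymplectic metric $g_{ab}$. Each super Schur component splits into a multiplicity-free sum of traceless orthosymplectic pieces, one for each admissible trace-removal, governed by the super analogue of Littlewood's branching rule; hence the number of highest weight vectors is bounded by the total number of such trace-removals, irrespective of whether the pieces are irreducible. A short computation using the super-symmetry $g_{ba}=(-1)^{[a][b]}g_{ab}$ shows that the contraction of the two indices inside one anti-symmetric pair always vanishes, so that the shape $(1,1,1,1)$ is already traceless and contributes a single piece, while $(2,2)$ yields three pieces and $(2,1,1)$ yields two. The total is thus at most $3+2+1=6$, and reading off the leading traceless tensor in each piece produces precisely the six pairwise distinct highest weights $2\epsilon_1+2\epsilon_2$, $2\epsilon_1+\epsilon_2+\epsilon_3$, $2\epsilon_1$, $\epsilon_1+\epsilon_2+\epsilon_3+\epsilon_4$, $\epsilon_1+\epsilon_2$ and $0$; the weights in the standard root system follow via the odd-reflection relation of \cite{MR1327543, Tensor}.

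Finally, for small $m$ the Young shapes are too deep to fit inside the available $\epsilon$-directions, and the trailing boxes must be accommodated in the symplectic $\delta$-directions of the $(m|2n)$ weight lattice; tracking this replacement yields the modified lists for $m=6,7$ and $m=5$. The step I expect to be the main obstacle is precisely this super trace-reduction: one must verify that over $\osp$, rather than over $\mathfrak{so}(m)$, each metric contraction is genuinely nonzero and that no super Schur component fragments into more orthosymplectic pieces than in the classical orthogonal case, so that the count stays bounded by six. The vanishing identity for contractions within an anti-symmetric pair is the key input that keeps the bookkeeping identical to the classical one.
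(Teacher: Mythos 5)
Your reduction to $V^{\otimes 4}$ and the $\mathfrak{gl}(m|2n)$-decomposition step are fine: the tensor square of $\mC^{m|2n}\wedge\mC^{m|2n}$ does decompose, by the super (hook-Schur) version of the Littlewood--Richardson rule, into the three components of shapes $(2,2)$, $(2,1,1)$, $(1,1,1,1)$ (this is an LR computation, incidentally, not a plethysm), and these are honest highest weight $\mathfrak{gl}(m|2n)$-modules since covariant tensor modules form a semisimple category. The problem is the next step, which you yourself flag as the ``main obstacle'' and then do not close: the claim that each super Schur component \emph{splits} into a multiplicity-free sum of orthosymplectic pieces indexed by trace-removals, so that the number of highest weight vectors is bounded by $3+2+1=6$. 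This is exactly where the argument fails as stated. The restriction $\mathfrak{gl}(m|2n)\downarrow\mathfrak{osp}(m|2n)$ is \emph{not} semisimple precisely in the cases the lemma must cover: the paper's Theorem \ref{cases012} shows $\mg\otimes\mg$ is not completely reducible for $M=m-2n\in\{0,1,2\}$, and Lemma \ref{maxvectors} is used there with no restriction on $M$. A Littlewood-type branching rule is, at best, a statement about characters in a stable or typical range; in atypical situations the character of an irreducible $K^{m|2n}_\mu$ is strictly smaller than its generic counterpart, so a character-level count of trace-removals does not even bound the number of composition factors, let alone the dimension of the space of $\mathfrak{n}$-invariants. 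And the quantity you actually need, $\dim(\mg\otimes\mg)^{\mathfrak{n}}$, is not a character-level invariant of an indecomposable module at all: two modules with the same character can have different numbers of singular vectors. So the inequality ``number of highest weight vectors $\le$ number of admissible trace-removals'' is unproven, and no soft semisimplicity or unitarity argument is available to rescue it in the atypical range.

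For contrast, the paper avoids branching entirely and bounds the singular vectors by a direct leading-term analysis: any highest weight vector is written as $v^+=X_{\epsilon_1+\epsilon_2}\otimes A+\cdots+Z\otimes X_{\epsilon_1+\epsilon_2}$, and applying the simple positive root vectors forces $[X_\alpha,A]=0$ for every simple $\alpha\neq\epsilon_2-\epsilon_3$; enumerating the solutions $A$ (five root vectors, one Cartan element, and $X_{-\epsilon_2+\epsilon_3}$, the last excluded because it would produce the non-dominant weight $\epsilon_1+\epsilon_3$) gives at most six candidates, uniformly in $m,n$ and with no reducibility assumptions. If you want to salvage your route, you would have to replace the branching heuristic by an explicit computation of the $\mathfrak{n}$-invariants inside each of the three hook-Schur components -- which amounts to redoing the paper's computation shape by shape. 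Your auxiliary observations are correct and worth keeping (the vanishing of the supertrace contraction within an antisymmetric pair, cf.\ equation \eqref{traces}, and the conversion of rows beyond $d=\lfloor m/2\rfloor$ into $\delta$-directions for $m=5,6,7$, which reproduces the paper's lists), but they do not substitute for the missing counting argument.
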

\begin{proof}
We give the proof explicitly for the non-standard root system and $m>7$, the other cases being completely analogous. 
As in the classical case any highest weight vector $v^+$ (of a fixed weight) in the tensor product is of the form
\begin{eqnarray*}
v^+&=&X_{\epsilon_1+\epsilon_2}\otimes A +\cdots+ Z\otimes X_{\epsilon_1+\epsilon_2},
\end{eqnarray*}
for $A,Z$ nonzero elements of $\mg$ of a certain weight.
The relation $X_\alpha v^+=0$ for all positive simple roots $\alpha$ in equation \eqref{simpleroots} leads to the condition
\begin{eqnarray*}
X_{\epsilon_1+\epsilon_2}\otimes [X_\alpha,A]+\cdots&=&0.
\end{eqnarray*}
In order for this to hold, there is either another vector $B\in\mg$ such that 
$[X_{\alpha},B]=X_{\epsilon_1+\epsilon_2}$ and $v^+$ is of the form
\begin{eqnarray*}
v^+&=&X_{\epsilon_1+\epsilon_2}\otimes A -B\otimes [X_\alpha, A]+ \cdots +Z\otimes X_{\epsilon_1+\epsilon_2},
\end{eqnarray*}
or alternatively the condition $[X_{\alpha},A]=0$ must hold.
This implies that $[X_{\alpha},A]=0$ for $\alpha\not=\epsilon_2-\epsilon_3$. This narrows the possibilities for $A$ down to $X_{\epsilon_1+\epsilon_2}$, $X_{\epsilon_1+\epsilon_3}$, $X_{\epsilon_1-\epsilon_2}$, $X_{\epsilon_3+\epsilon_4}$, the 
unique element $H$ of the Cartan subalgebra that satisfies $[X_{\alpha_j},H]=0$ for $j\not=2$ with $\alpha_j$, $j=1,\cdots,m+2n$ 
the positive simple roots in equation \eqref{simpleroots}, $X_{-\epsilon_1-\epsilon_2}$ and $X_{-\epsilon_2+\epsilon_3}$.
The last one would lead to a highest weight $\epsilon_1+\epsilon_3$ which is impossible.
\end{proof}

\begin{lemma}
\label{tensorfund}
For $m\not=2n$, the tensor product representation of the fundamental representation $\mC^{m|2n}\cong K^{m|2n}_{\epsilon_1}=L^{m|2n}_{\delta_1}$ of $\osp$ with itself has the following decomposition into irreducible representations
\begin{eqnarray*}
\mC^{m|2n}\otimes\mC^{m|2n}&\cong &K^{m|2n}_{\epsilon_1}\otimes K^{m|2n}_{\epsilon_1}=L^{m|2n}_{\delta_1}\otimes L^{m|2n}_{\delta_1}\\
&\cong& K^{m|2n}_{2\epsilon_1}\oplus K^{m|2n}_{\epsilon_1+\epsilon_1}\oplus K^{m|2n}_{0}\\
&\cong& L^{m|2n}_{\delta_1+\delta_2}\oplus L^{m|2n}_{2\delta_1}\oplus L_0^{m|2n}.
\end{eqnarray*}
The representations above each other in the last two lines are in correspondence and $\mC^{m|2n}\odot\mC^{m|2n}\cong K^{m|2n}_{2\epsilon_1}\oplus  K^{m|2n}_0$ and $\mC^{m|2n}\wedge\mC^{m|2n}\cong K^{m|2n}_{\epsilon_1+\epsilon_2}$.
\end{lemma}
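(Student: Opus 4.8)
The plan is to split $\mC^{m|2n}\otimes\mC^{m|2n}$ into its supersymmetric and super\-antisymmetric summands, $\mC^{m|2n}\otimes\mC^{m|2n}=\left(\mC^{m|2n}\odot\mC^{m|2n}\right)\oplus\left(\mC^{m|2n}\wedge\mC^{m|2n}\right)$, as recalled in the preliminaries, and then to identify each summand separately. Since the right-hand side of the claimed decomposition has exactly three irreducible pieces, of which $K^{m|2n}_{\epsilon_1+\epsilon_2}$ should be the antisymmetric part while $K^{m|2n}_{2\epsilon_1}$ and $K^{m|2n}_0$ are supersymmetric, it suffices to treat the two parities in turn and then translate the weights.

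For the antisymmetric part I would invoke the definition of $\osp$ directly. The metric $g$ is $\osp$-invariant, so lowering an index is an isomorphism of $\osp$-modules between $\mC^{m|2n}$ and its dual; hence $\mC^{m|2n}\otimes\mC^{m|2n}\cong\mC^{m|2n}\otimes\left(\mC^{m|2n}\right)^\ast=\mathfrak{gl}(m|2n)$ as $\osp$-modules, and this isomorphism matches super\-antisymmetry of a tensor $A^{ab}$ with the defining relation $A^{ab}=-(-1)^{[a][b]}A^{ba}$. Thus $\mC^{m|2n}\wedge\mC^{m|2n}$ is precisely the adjoint module $\osp\cong K^{m|2n}_{\epsilon_1+\epsilon_2}=L^{m|2n}_{2\delta_1}$, which is irreducible because $\osp$ is simple.

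For the symmetric part I would use the trace map. The supersymmetric square equals $\odot^2\mC^{m|2n}$, the invariant metric tensor $g$ lies in it, and the $\osp$-equivariant trace $\odot^2\mC^{m|2n}\to\mC$ sends $g$ to $\sum_a g_a{}^a=M=m-2n$. Since the hypothesis $m\neq2n$ gives $M\neq0$, this trace is surjective and $\mC\cdot g\cong K^{m|2n}_0$ splits off as a trivial summand, leaving the traceless tensors $\odot_0^2\mC^{m|2n}=\cH_2$. It then remains to show that $\cH_2$ is irreducible and isomorphic to $K^{m|2n}_{2\epsilon_1}$, which follows from Theorem \ref{sphHarm} with $p=m$ and $k=2$ once the exceptional indecomposable range is shown never to occur: reducibility would require $m-2n=-2l$ with $l\geq1$ and $2\in[l+2,2l+2]$, but $l+2\leq2$ forces $l\leq0$, a contradiction. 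Finally the passage to the standard-root-system labels $L^{m|2n}_{\delta_1+\delta_2}$, $L^{m|2n}_{2\delta_1}$ and $L^{m|2n}_0$ is the odd-reflection dictionary of Theorem 3 of \cite{Tensor}.

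The computations here are light; the one genuinely careful point, which I regard as the main obstacle, is the irreducibility of the traceless part, i.e.\ confirming that the degree $k=2$ always avoids the reducibility window $[l+2,2l+2]$ of Theorem \ref{sphHarm}, so that no special small values of $m$ and $n$ produce an extra composition factor that would spoil the clean three-term decomposition.
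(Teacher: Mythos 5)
Your proof is correct and takes essentially the same route as the paper's: split the tensor square into its supersymmetric and superantisymmetric parts, identify $\mC^{m|2n}\wedge\mC^{m|2n}$ with the adjoint module (irreducible since $\osp$ is simple), split $\mC^{m|2n}\odot\mC^{m|2n}$ into the span of the metric tensor plus the traceless part using $M=m-2n\neq 0$, and translate highest weights via Theorem 3 of \cite{Tensor}. The only difference is one of detail: where the paper delegates irreducibility of the symmetric traceless part to a citation of \cite{OSpHarm}, you verify it directly from Theorem \ref{sphHarm} by checking that $k=2$ can never fall in the reducibility window $[l+2,2l+2]$ for $l\geq 1$ --- a useful explicit check that the paper leaves implicit.
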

\begin{proof}
The super anti-symmetric part $\mC^{m|2n}\wedge\mC^{m|2n}$ (the tensors satisfying $A^{ab}=-(-1)^{[a][b]}A^{ba}$) is clearly the adjoint representation because of equation \eqref{Liebracket}. This is an irreducible representation with highest weight $\epsilon_1+\epsilon_2$. If $m\not=2n$, the super symmetric part $\mC^{m|2n}\odot\mC^{m|2n}$ decomposes as $ K^{m|2n}_{2\epsilon_1}\oplus K^{m|2n}_{0}$ as has been proven in \cite{OSpHarm}, or follows from Equation \eqref{tensortracedecomp}. The corresponding highest weights for the standard root system can then be obtained from Theorem 3 in \cite{Tensor}.
\end{proof}

As was mentioned at the end of Section \ref{preli}, the trivial representation $K^{m|2n}_0$ inside $\mC^{m|2n}\otimes\mC^{m|2n}$ is given by the tensor $g^{ab}$. If $M=m-2n\not=0$, the two other components correspond to traceless tensors. For the super skew tensors, this is a consequence of equation \eqref{traces}. The statement for the super symmetric tensors follows from the fact that they can be written as
\begin{eqnarray*}
A^{ab}&=&\left(A^{ab}-\frac{A_c{}^c}{M}g^{ab}\right)+\frac{A_c{}^c}{M}g^{ab},
\end{eqnarray*}
if $M\not=0$.

\begin{remark}
\label{relativeCartan}
{\rm Lemma \ref{tensorfund} already shows that the notion of Cartan product depends on the choice of root system. For the standard root system, the Cartan product is $L^{m|2n}_{2\delta_1}=K^{m|2n}_{\epsilon_1+\epsilon_2}$. This actually is inside the super anti-symmetric part of $\mC^{m|2n}\otimes\mC^{m|2n}$, which would be very unnatural for a Cartan product. However, $L_{2\delta_1}^{m|2n}$ is inside the super symmetric part of $\mC^{2n|m}\otimes\mC^{2n|m}$, which corresponds to the $\mathfrak{spo}(2n|m)$-interpretation. The Cartan product with respect to the second root system is $K^{m|2n}_{2\epsilon_1}=L^{m|2n}_{\delta_1+\delta_2}$ and is inside the super symmetric part. Therefore we have a different Cartan product as an $\mathfrak{spo}(2n|m)$-representation and as an $\mathfrak{osp}(m|2n)$-representation.}
\end{remark}

\begin{lemma}
\label{embeddingCC}
There is an injective homomorphism $\phi$ from 
$\mC^{m|2n}\otimes \mC^{m|2n}$ into $\mg\otimes\mg$, 
with $\mg=\osp$. This implies that for $m-2n\not=0$,
\begin{eqnarray*}
K^{m|2n}_{2\epsilon_1}\oplus K^{m|2n}_0\subset \mg \odot\mg\quad &\mbox{and}&\quad K^{m|2n}_{\epsilon_1+\epsilon_2}\subset \mg\wedge\mg
\end{eqnarray*}
hold.
\end{lemma}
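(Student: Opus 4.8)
The plan is to write $\phi$ down explicitly in the abstract index notation and then to locate the three irreducible summands of the source, already identified in Lemma~\ref{tensorfund}, inside $\mg\otimes\mg$.

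First I would define $\phi$ by spreading a two-tensor $V^{ac}\in\mC^{m|2n}\otimes\mC^{m|2n}$ against the invariant metric and super anti-symmetrizing each of the two resulting index pairs: for $V=(V^{ac})$ set
\begin{eqnarray*}
\phi(V)^{ab,cd}&=&V^{ac}g^{bd}-(-1)^{[a][b]}V^{bc}g^{ad}-(-1)^{[c][d]}V^{ad}g^{bc}+(-1)^{[a][b]+[c][d]}V^{bd}g^{ac}.
\end{eqnarray*}
A one-line index swap shows $\phi(V)^{ba,cd}=-(-1)^{[a][b]}\phi(V)^{ab,cd}$, and likewise in the pair $(c,d)$, so each pair is super anti-symmetric and $\phi(V)\in\mg\otimes\mg$ by the description of $\osp$ as the super anti-symmetric tensors in $\mC^{m|2n}\otimes\mC^{m|2n}$. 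Since the metric $g$ is $\mg$-invariant and super anti-symmetrization is $\mg$-equivariant, $\phi$ is automatically a homomorphism of $\mg$-modules.

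Next I would prove injectivity. By Lemma~\ref{tensorfund} (applicable because $m\neq 2n$) the source decomposes as $K^{m|2n}_{2\epsilon_1}\oplus K^{m|2n}_{\epsilon_1+\epsilon_2}\oplus K^{m|2n}_0$ into three pairwise non-isomorphic irreducibles, so $\ker\phi$ is a direct sum of a subset of them and, by Schur's lemma, it suffices to check that $\phi$ is nonzero on each summand. On the trivial summand, represented by $V^{ac}=g^{ac}$, the four terms collapse to $2g^{ac}g^{bd}-2(-1)^{[a][b]}g^{bc}g^{ad}$, a nonzero element of $\mg\otimes\mg$ (note this survives irrespective of $M$, so the hypothesis $m-2n\neq0$ is only used to split the source). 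On the remaining two summands I would evaluate $\phi$ on a highest weight vector of each and read off a nonvanishing component, giving $\ker\phi=0$.

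Finally, to pin down the symmetry type I would verify that $\phi$ intertwines the graded flip $\tau$ on $\mC^{m|2n}\otimes\mC^{m|2n}$, $\tau(V)^{ac}=(-1)^{[a][c]}V^{ca}$, with the graded flip on $\mg\otimes\mg$; this is a direct (if sign-heavy) check using that $g$ is super-symmetric, and the classical case $\mathfrak{so}(m)$ confirms that the honest intertwiner, not its negative, appears. Consequently $\phi$ carries $\mC^{m|2n}\odot\mC^{m|2n}$ into $\mg\odot\mg$ and $\mC^{m|2n}\wedge\mC^{m|2n}$ into $\mg\wedge\mg$; combined with the identifications $\mC^{m|2n}\odot\mC^{m|2n}\cong K^{m|2n}_{2\epsilon_1}\oplus K^{m|2n}_0$ and $\mC^{m|2n}\wedge\mC^{m|2n}\cong K^{m|2n}_{\epsilon_1+\epsilon_2}$ from Lemma~\ref{tensorfund} and with the injectivity just established, this yields the two claimed inclusions. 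The main obstacle is precisely the sign bookkeeping in this graded-flip intertwining step: in the super setting the Koszul signs must be tracked carefully to be sure the symmetric and skew parts go to the symmetric and skew parts rather than getting interchanged, which is where the argument could silently fail.
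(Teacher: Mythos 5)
Your overall strategy is exactly the paper's (an explicit embedding built by tensoring with the invariant metric and super anti-symmetrizing both index pairs, then Lemma~\ref{tensorfund} plus a Schur-type injectivity argument), but the explicit formula you wrote down is wrong, and in precisely the spot you flagged as risky. You interleave the indices of $V$ and $g$ as $(a,c)$ and $(b,d)$ and take literal component products. In the super setting, the equivariant tensor with that index placement is the braiding applied to $V\otimes g$, whose components carry the Koszul sign $(-1)^{[b][c]}$ from moving the second leg of $V$ past the first leg of $g$; your formula omits it. On the support of $g^{bd}$ (where $[b]=[d]$) this omitted sign equals $(-1)^{[c][d]}$, and a term-by-term comparison shows your map equals the paper's embedding multiplied by the non-constant sign $-(-1)^{[c][d]}$ (up to an overall factor $4$). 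That sign is $+1$ except when both indices of the second $\mg$-slot are odd (the $\mathfrak{sp}(2n)$-type pairs), where it is $-1$; it is invariant under $\mg_{\bar{0}}$ but not under the odd part of $\mg$, which mixes these sectors. So your $\phi$ is \emph{not} a $\mg$-module homomorphism: the statement ``$g$ is invariant and anti-symmetrization is equivariant, hence $\phi$ is automatically a homomorphism'' is true of the categorical construction but not of your written components. A concrete symptom is your own evaluation at the invariant vector $V^{ac}=g^{ac}$: the result $2\bigl(g^{ac}g^{bd}-(-1)^{[a][b]}g^{bc}g^{ad}\bigr)$ is not proportional to the unique invariant \eqref{tracepart} in $\mg\otimes\mg$ (they differ by the varying sign $-(-1)^{[a][b]}$ on their common support), so your map sends an invariant vector to a non-invariant tensor.

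The same defect kills the graded-flip step: writing $\sigma=([a]+[b])([c]+[d])$, one finds that $\tau(\phi(V))^{abcd}=(-1)^{\sigma}\phi(V)^{cdab}$ and $\phi(\tau V)^{abcd}$ differ by the factor $(-1)^{[a][b]+[c][d]}$ for your formula, so for super symmetric $V$ with odd components $\phi(V)\notin\mg\odot\mg$ --- exactly the silent interchange you worried about. The repair is to put the metric's legs in the \emph{inner} slots, as the paper does: start from $T^{abcd}=g^{bc}V^{ad}$, where the reordering sign $(-1)^{[d]([b]+[c])}$ is $+1$ on the support of the even tensor $g$, so the naive component formula is genuinely equivariant, and then super anti-symmetrize in $(a,b)$ and $(c,d)$ to get
\begin{eqnarray*}
\phi(V)^{abcd}&=&\frac{1}{4}\left(g^{bc}V^{ad}-(-1)^{[a][b]}g^{ac}V^{bd}-(-1)^{[c][d]}g^{bd}V^{ac}+(-1)^{[a][b]+[c][d]}g^{ad}V^{bc}\right).
\end{eqnarray*}
With this formula the flip intertwining does hold and the rest of your plan goes through, coinciding with the paper's proof; your explicit Schur's-lemma treatment of injectivity (checking $\phi\neq 0$ on each of the three pairwise non-isomorphic summands) is then a welcome addition, since the paper leaves injectivity nearly implicit.
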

\begin{proof}
Firstly, we construct the $\mathfrak{osp}(m|2n)$-module homomorphism
\begin{eqnarray*}
\phi_1:\otimes^2\mC^{m|2n}&\to&\otimes^4\mC^{m|2n}:\\
\phi_1(X)=T&&T^{abcd}=g^{bc}X^{ad}.
\end{eqnarray*}
The embedding of $\mC^{m|2n}\otimes\mC^{m|2n}$ in $\mg\otimes\mg$ is then given by the composition of $\phi_1$ with $\mg$-invariant projection $\otimes^4\mC^{m|2n}\to\left(\mC^{m|2n}\wedge\mC^{m|2n}\right)\otimes\left(\mC^{m|2n}\wedge\mC^{m|2n}\right)\cong\mg\otimes\mg$. So the embedding is given by $\phi(X)=V$, with
\begin{eqnarray*}
V^{abcd}&=&\frac{1}{4}\left(g^{bc}X^{ad}-(-1)^{[a][b]}g^{ac}X^{bd}-(-1)^{[c][d]}g^{bd}X^{ac}+(-1)^{[a][b]+[c][d]}g^{ad}X^{bc}\right).
\end{eqnarray*}
It is clear that $\phi$ maps $\mC^{m|2n}\wedge\mC^{m|2n}$ into $\mg\wedge\mg$ and $\mC^{m|2n}\odot\mC^{m|2n}$ into $\mg\odot\mg$. The rest of the lemma then follows immediately from Lemma \ref{tensorfund}.
\end{proof}

In particular, the realization of the trivial representation $K^{m|2n}_0$ inside $\mg\otimes\mg$ is given by the tensor
\begin{eqnarray}
\label{tracepart}
V^{abcd}=\frac{1}{2}\left(g^{bc}g^{ad}-(-1)^{[a][b]}g^{ac}g^{bd}\right),
\end{eqnarray}
called the total trace part of $\mg\otimes\mg$. When the projection of $\otimes\mg$ onto $\cU(\mg)$ is considered this corresponds to the quadratic Casimir operator.

\begin{lemma}
\label{subA}
If $M=m-2n\not\in\{1,2\}$, the second tensor power of the adjoint representation of $\mg=\osp$ has the following $\mg$-module decomposition:
\begin{eqnarray*}
\mg\otimes \mg&\cong& \mC^{m|2n}\otimes\mC^{m|2n}\quad\oplus\quad \cA,
\end{eqnarray*}
with $\cA$ the tensors $T^{abcd}$ in $\mg\otimes\mg$ that satisfy the relation $\sum_{b}T^a{}_b{}^{bd}=0$.
\end{lemma}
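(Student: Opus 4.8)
The plan is to exhibit the claimed decomposition as the splitting of $\mg\otimes\mg$ induced by a pair of $\mg$-homomorphisms relating $\mg\otimes\mg$ and $\mC^{m|2n}\otimes\mC^{m|2n}$. On one side I use the embedding $\phi$ constructed in Lemma \ref{embeddingCC}, whose image is a copy of $\mC^{m|2n}\otimes\mC^{m|2n}$ inside $\mg\otimes\mg$. On the other side I introduce the partial-trace (middle-contraction) map
\begin{eqnarray*}
\tau:\mg\otimes\mg\to\mC^{m|2n}\otimes\mC^{m|2n},\qquad \tau(T)^{ad}=\sum_b T^a{}_b{}^{bd}.
\end{eqnarray*}
Since $\tau$ is built solely from the invariant metric, it is a $\mg$-module homomorphism, and by construction $\cA=\ker\tau$. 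The whole statement will follow from the splitting lemma once I prove that the composite $\tau\circ\phi$ is an automorphism of $\mC^{m|2n}\otimes\mC^{m|2n}$ precisely when $M\notin\{1,2\}$.

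The central computation is to evaluate $\tau\circ\phi$ explicitly. Inserting the formula for $\phi$ from Lemma \ref{embeddingCC} into $\tau$ and contracting the inner indices, the four terms collapse; the only delicate point is keeping track of the super-signs, for which the relevant identity is $\sum_b g^{ab}g_{bc}=(-1)^{[a]}\delta^a_c$ together with $\sum_{b,c}g_{bc}g^{cb}=M$ from \eqref{traces}. The outcome is
\[
\tau(\phi(X))^{ad}=\frac14\left((M-2)\,X^{ad}+g^{ad}\sum_{b,c}g_{bc}X^{cb}\right),
\]
so that $\tau\circ\phi=\frac14\big((M-2)\,\mathrm{Id}+E\big)$, where $E$ is the trace-reinsertion operator $E(X)^{ad}=g^{ad}\sum_{b,c}g_{bc}X^{cb}$. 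This sign-careful contraction is the main obstacle of the argument; once the formula above is in hand the rest is formal.

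It remains to decide when $\tau\circ\phi$ is invertible. A direct check using $\sum_{b,c}g_{bc}g^{cb}=M$ shows that $E$ satisfies $E^2=M\,E$, hence its eigenvalues lie in $\{0,M\}$ and the eigenvalues of $\tau\circ\phi$ are $\tfrac{M-2}{4}$ (on the traceless tensors, where $E$ vanishes) and $\tfrac{M-1}{2}$ (on the trivial summand $K^{m|2n}_0$ spanned by the metric, cf.\ Lemma \ref{tensorfund}, where $E$ acts by $M$). Both scalars are nonzero exactly when $M\notin\{1,2\}$, so $\tau\circ\phi$ is an isomorphism there. Note that the value $M=0$, which is permitted by the hypothesis but excluded from the clean three-term decomposition of Lemma \ref{tensorfund}, is still covered: there $E^2=0$ is nilpotent, so $(M-2)\,\mathrm{Id}+E=-2\,\mathrm{Id}+E$ is invertible as well.

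Finally I assemble the splitting. Because $\tau\circ\phi$ is invertible, $\operatorname{im}\phi\cap\ker\tau=0$, and every $T\in\mg\otimes\mg$ can be written as
\begin{eqnarray*}
T=\phi\big((\tau\circ\phi)^{-1}\tau(T)\big)+\Big(T-\phi\big((\tau\circ\phi)^{-1}\tau(T)\big)\Big),
\end{eqnarray*}
where the first summand lies in $\operatorname{im}\phi$ and the second, by a one-line verification, lies in $\ker\tau$. Since $\phi$ and $\tau$ are $\mg$-homomorphisms, this is a direct sum of $\mg$-submodules, with $\operatorname{im}\phi\cong\mC^{m|2n}\otimes\mC^{m|2n}$ (as $\phi$ is injective) and $\ker\tau=\cA$. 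This yields $\mg\otimes\mg\cong(\mC^{m|2n}\otimes\mC^{m|2n})\oplus\cA$ for $M\notin\{1,2\}$, as claimed.
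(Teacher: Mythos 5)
Your proposal is correct and follows essentially the same route as the paper: the paper introduces the same trace map (there called $p$, with $\chi=\phi\circ p$), computes the identical formula $p\circ\phi = \tfrac14\bigl((M-2)\,\mathrm{Id}+E\bigr)$ in its equation \eqref{pphi}, reads off the scalars $\tfrac{M-2}{4}$ and $\tfrac{M-1}{2}$, and concludes $\mg\otimes\mg=\mathrm{Im}(\chi)\oplus\mathrm{Ker}(\chi)$. The only (cosmetic) difference is your handling of $M=0$ via the nilpotency $E^2=ME=0$, which is a slightly slicker uniform argument than the paper's direct injectivity check for that case.
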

\begin{proof}
We denote the trace map by 
\begin{eqnarray*}
p:\mg\otimes\mg\to\mC^{m|2n}\otimes\mC^{m|2n},\quad p(T)=Y\quad& \mbox{with}&\quad Y^{ad}=\sum_bT^a{}_b{}^{bd}.
\end{eqnarray*}
This clearly is a $\mg$-module morphism. The composition of $p$ with $\phi$ from Lemma \ref{embeddingCC} is denoted by $\chi=\phi\circ p:\mg\otimes\mg\to\mg\otimes\mg$. The two spaces Im$(\chi)$ and Ker$(\chi)$ are subrepresentations of $\mg\otimes\mg$. 

It is clear that Ker$(\chi)= $ Ker$(p)=\cA$ holds. Next we prove that Im$(\chi)=$ Im$(\phi)$ holds. This is a consequence of the fact that $p$ is surjective, which we prove by calculating the composition $p\circ\phi$ on $\mC^{m|2n}\otimes\mC^{m|2n}$:
\begin{eqnarray}
\label{pphi}
X^{ab}&\to&\frac{1}{4}\left((M-2)X^{ab}+g^{ab}\sum_cX_{c}{}^c\right),
\end{eqnarray}
which follows from equation \eqref{traces}. This implies that for $M=m-2n\not\in\{0,2\}$, $K^{m|2n}_{2\epsilon_1}\oplus K^{m|2n}_{\epsilon_1+\epsilon_2}\subset$ Im$(p)$. Substituting $g^{ab}$ for $X^{ab}$ in equation \eqref{pphi} implies that the total trace part is mapped under $p\circ\phi$ to
\begin{eqnarray}
\label{tracepphi}
g^{ab}&\to&\frac{1}{2}(M-1)g^{ab},
\end{eqnarray}
so for $M=m-2n\not\in\{0,1\}$, $K^{m|2n}_0\subset$ Im$(p)$, which leads to $\mC^{m|2n}\otimes \mC^{m|2n}=$ Im$(p)$ provided $m-2n\not\in\{0,1,2\}$.

The case left is $M=0$. The surjectivity of mapping \eqref{pphi} follows from injectivity. This injectivity can be checked immediately for traceless tensors. If the mapping would be zero for a tensor which is not traceless than in particular the trace of the right-hand side of equation \eqref{pphi} should be zero, which is never the case for such a tensor if $M=0$.

The calculations above also imply that Ker$(\chi)\cap$Im$(\chi)=$ Ker$(p)\cap$Im$(\phi)=\{0\}$ and therefore $\mg\otimes\mg=$Im$(\chi)\oplus$Ker$(\chi)$, which completes the proof. \end{proof}

Now we get to the proof of Theorem \ref{decomposition}.
\begin{proof}
Consider the subrepresentation $\cA\subset\mg\otimes\mg$ of traceless tensors from Lemma \ref{subA}. First we remark that $\cA\cap (\mathfrak{g}\wedge\mathfrak{g})\not=0$, which is a consequence of the corresponding claim for 
$\mathfrak{so}(m)\subset\mg$. We call this subrepresentation $\cA_1$. The representation 
$\cA_2=\cA\cap (\mathfrak{g}\odot\mathfrak{g})$ is nonzero for the same reason.

We define the $\mg$-module morphism $q:\cA_2\to\cA_2$ given by
\begin{eqnarray}
\label{FormCartan2}
V^{abcd}&\to&\frac{1}{3}\left(V^{abcd}+(-1)^{[a]([b]+[c])}V^{bcad}-(-1)^{[b][c]}V^{acbd}\right)
\end{eqnarray}
for $V\in\cA_2\subset \mg\odot\mg$. This is a projection ($q^2=q$) and we obtain that the kernel and the 
image of $q$, $\cA_2^{im}$ and $\cA^{ker}_2$, satisfy $\cA_2=\cA_2^{im}\oplus\cA_2^{ker}$ as $\mg$-representations. The 
non-emptyness of these subrepresentations follows from the classical $\mathfrak{sp}(2n)$-case. 

Therefore by considering Lemma \ref{tensorfund} and Lemma \ref{subA} we have proven that as a $\mathfrak{g}$-representation, the decomposition
\begin{eqnarray*}
\mg\otimes \mg&\cong& K^{m|2n}_{2\epsilon_1}\oplus K^{m|2n}_{\epsilon_1+\epsilon_2}\oplus K^{m|2n}_0\oplus \cA_1\oplus \cA_2^{im}\oplus\cA_2^{ker}
\end{eqnarray*}
holds. So $\mathfrak{g}\otimes \mathfrak{g}$ has at least 6 disjoint subrepresentation and by Lemma \ref{maxvectors} at most 6 highest weight vectors. Therefore, Corollary 1 in \cite{Tensor} can be used to conclude that $\mathfrak{g}\otimes\mathfrak{g}$ is completely reducible and has exactly the 6 weights from Lemma \ref{maxvectors} appearing as highest weight vectors.
\end{proof}

Then it is easily verified that $\cA_1=K^{m|2n}_{2\epsilon_1+\epsilon_2+\epsilon_3}$, $\cA^{ker}_2=K^{m|2n}_{2\epsilon_1+2\epsilon_2}$ and $\cA^{im}_2=K^{m|2n}_{\epsilon_1+\epsilon_2+\epsilon_3+\epsilon_4}$.

\begin{remark}
\label{qsymm}
{\rm The $\mg$-module morphism $q$ in equation \eqref{FormCartan2} can be defined on $\mg\odot\mg$ as well instead of on $\cA_2$, the traceless tensors in $\mg\odot\mg$. Since the image of $q$ is always tracefree ($p\circ q=0$), we find that $K^{m|2n}_{\epsilon_1+\epsilon_2+\epsilon_3+\epsilon_4}$ is also equal to the image of $q$ acting on $\mg\odot\mg$,
\begin{eqnarray*}
K^{m|2n}_{\epsilon_1+\epsilon_2+\epsilon_3+\epsilon_4}&=&\{\frac{1}{3}(V^{abcd}+(-1)^{[a]([b]+[c])}V^{bcad}-(-1)^{[b][c]}V^{acbd})|V\in\mg\odot\mg\}.
\end{eqnarray*}
The kernel of $q$ is related to the representation $K^{m|2n}_{2\epsilon_1+2\epsilon_2}$:
\begin{eqnarray*}
K^{m|2n}_{2\epsilon_1+2\epsilon_2}=\{V\in\mg\odot\mg|V^{abcd}+(-1)^{[a]([b]+[c])}V^{bcad}-(-1)^{[b][c]}V^{acbd}=0\mbox{ and $V$ is traceless}\}.
\end{eqnarray*}}
\end{remark}

According to the standard choice of positive roots, the Cartan product inside $\mg\otimes\mg$ is given by $L^{m|2n}_{4\delta_1}=K^{m|2n}_{\epsilon_1+\epsilon_2+\epsilon_3+\epsilon_4}\subset\mg\odot\mg$, it is part of the completely traceless part inside $\mg\odot\mg$, denoted by $\cA_2$ in the proof of Theorem \ref{decomposition}. According to the second choice of positive roots, the Cartan product inside $\mg\otimes\mg$ is given by $K^{m|2n}_{2\epsilon_1+2\epsilon_2}=L^{m|2n}_{2\delta_1+2\delta_2}\subset\mg\odot\mg$, it is also a component of the completely traceless part inside $\mg\odot\mg$, in fact $\cA_2=K^{m|2n}_{2\epsilon_1+2\epsilon_2}\oplus K^{m|2n}_{\epsilon_1+\epsilon_2+\epsilon_3+\epsilon_4}$ if $M\not\in\{1,2\}$. 

For $\mg\cong\osp\cong\mathfrak{spo}(2n|m)$ we introduce the notations
\begin{eqnarray*}
\mg\topa\mg &=& \mathfrak{osp}(m|2n)\circledcirc\osp\cong K^{m|2n}_{2\epsilon_1+2\epsilon_2},\mbox{ if }M\not\in\{1,2\}\\
\mg\topb\mg&=&\mathfrak{spo}(2n|m)\circledcirc\mathfrak{spo}(2n|m)\cong L^{m|2n}_{4\delta_1}.
\end{eqnarray*}
The two Cartan products are therefore given by
\begin{eqnarray*}
\mathfrak{g}\topa\mathfrak{g}\cong K^{m|2n}_{2\epsilon_1+2\epsilon_2}\cong L^{m|2n}_{2\delta_1+2\delta_2}&\quad\mbox{and}\quad&\mathfrak{g}\topb\mathfrak{g}\cong L^{m|2n}_{4\delta_1}\cong K^{m|2n}_{\epsilon_1+\epsilon_2+\epsilon_3+\epsilon_4}.
\end{eqnarray*}
To avoid confusion we will use the notations $\mg\topa\mg$ and $\mg\topb\mg$ rather than $\osp\circledcirc\osp$ and $\mathfrak{spo}(2n|m)\circledcirc\mathfrak{spo}(2n|m)$.

\begin{remark}
\label{Cartandiff}
{\rm
There is an important difference between the two Cartan products. For $\mg=\osp\cong\mathfrak{spo}(2n|m)$ with $m-2n\not\in\{0,1,2\}$, we denote $\Pi^{m|2n}_1$ and $\Pi^{m|2n}_2$ the unique $\mg$-invariant projections from $\mg\otimes\mg$ onto respectively $\mg\topa\mg$ and $\mg\topb\mg$. There are $X\in\mathfrak{so}(m)\otimes\mathfrak{so}(m)$ such that
\begin{eqnarray*}
\Pi_1^{m|0}(X)&\not=&\Pi_1^{m|2n}(X)
\end{eqnarray*}
holds, because the trace parts that need to be subtracted in the $\osp$-case are given by the tensor $g^{ab}$ and not the metric-part of $\mathfrak{so}(m)$. In other words the projection onto the Cartan product of $\osp\otimes\mathfrak{osp}(m|2n)$ of an element in $\mathfrak{so}(m)\otimes\mathfrak{so}(m)$ is not equal to its projection onto the Cartan product $\mathfrak{so}(m)\circledcirc\mathfrak{so}(m)$.

From the form of the second Cartan product in Remark \ref{qsymm}, it follows that
\begin{eqnarray*}
\Pi_1^{0|2n}(Y)&=&\Pi_1^{m|2n}(Y)\qquad\mbox{holds for all}\quad Y\in\mathfrak{sp}(2n)\otimes\mathfrak{sp}(2n).
\end{eqnarray*}
This different behavior will be important for the Joseph-like ideals and the corresponding minimal representations.}
\end{remark}

Now we consider the structure of the second power of the adjoint representation for the exceptional cases, $M$ equal to $0$, $1$ or $2$.
\begin{theorem}
\label{cases012}
In the cases $M=m-2n\in\{0,1,2\}$ the representation $\mg\otimes\mg$ is not completely reducible for $\mg=\osp$. If $M=0$, the decomposition of $\mg$-representations
\begin{eqnarray*}
\mg\otimes\mg&\cong&K^{m|2n}_{2\epsilon_1+2\epsilon_2}\oplus K^{m|2n}_{2\epsilon_1+\epsilon_2+\epsilon_3}\oplus K^{m|2n}_{\epsilon_1+\epsilon_2+\epsilon_3+\epsilon_4}\oplus K^{m|2n}_{\epsilon_1+\epsilon_2}\,\oplus \,\,\left(\mC^{m|2n}\odot\mC^{m|2n}\right).\end{eqnarray*}
holds, where $\mC^{m|2n}\odot\mC^{m|2n}$ is not completely reducible. If $M\in\{1,2\}$, the decomposition of $\mg$-representations
\begin{eqnarray*}
\mg\otimes\mg&\cong&  K^{m|2n}_{\epsilon_1+\epsilon_2+\epsilon_3+\epsilon_4}\,\oplus\,  \gamma\,\oplus \,\mg\wedge\mg\end{eqnarray*}
holds where $ \gamma$ is the representation corresponding to the tensors
\begin{eqnarray*}
 \gamma&=&\{V\in\mg\odot\mg|V^{abcd}+(-1)^{[a]([b]+[c])}V^{bcad}-(-1)^{[b][c]}V^{acbd}=0\},
\end{eqnarray*}
which is not completely reducible and the representation $K^{m|2n}_{\epsilon_1+\epsilon_2+\epsilon_3+\epsilon_4}$ is given as in Remark \ref{qsymm}. 
\end{theorem}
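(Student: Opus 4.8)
The plan is to split the analysis into the value $M=0$ and the two values $M\in\{1,2\}$, and in each case to reduce the claim to the completely reducible building blocks already extracted in the proof of Theorem \ref{decomposition}, isolating a single block whose failure to be completely reducible can be traced to a degeneration of the trace map $p$ of Lemma \ref{subA} (equivalently of the composition $p\circ\phi$ computed in \eqref{pphi}--\eqref{tracepphi}) at these special values of $M$.

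\emph{The case $M=0$.} I would first note that Lemma \ref{subA} was established precisely under the hypothesis $M\notin\{1,2\}$, so the splitting $\mg\otimes\mg\cong\mC^{m|2n}\otimes\mC^{m|2n}\oplus\cA$ survives. The traceless block $\cA$ is decomposed exactly as in the proof of Theorem \ref{decomposition}: the submodules $\cA_1=\cA\cap(\mg\wedge\mg)$ and $\cA_2=\cA\cap(\mg\odot\mg)$ together with the idempotent $q$ of \eqref{FormCartan2} give the three summands $K^{m|2n}_{2\epsilon_1+\epsilon_2+\epsilon_3}$, $K^{m|2n}_{2\epsilon_1+2\epsilon_2}$ and $K^{m|2n}_{\epsilon_1+\epsilon_2+\epsilon_3+\epsilon_4}$, which remain irreducible because the $M=0$ degeneracy is confined to the trace part: the total trace tensor \eqref{tracepart} does not lie in $\cA$ when $M=0$, its image under $p$ being $\tfrac12(M-1)g^{ab}\neq0$. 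In the remaining factor $\mC^{m|2n}\otimes\mC^{m|2n}=\mC^{m|2n}\odot\mC^{m|2n}\oplus\mC^{m|2n}\wedge\mC^{m|2n}$ the antisymmetric part is the irreducible adjoint $K^{m|2n}_{\epsilon_1+\epsilon_2}$ and splits off. It then remains to observe that $\mC^{m|2n}\odot\mC^{m|2n}$ is not completely reducible: by \eqref{traces} the metric $g^{ab}$ is traceless when $M=0$, so the trivial submodule it spans lies inside the traceless symmetric tensors and cannot be removed by a trace correction; equivalently these traceless tensors form the indecomposable module $\cH_2$ of Theorem \ref{sphHarm} with $l=0$, $k=2$. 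Collecting the four irreducible summands with the non-split block $\mC^{m|2n}\odot\mC^{m|2n}$ gives the stated decomposition.

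\emph{The cases $M\in\{1,2\}$.} Here the trace map is no longer invertible on the fundamental piece and Lemma \ref{subA} is unavailable, so I would build the decomposition only from the two splittings that avoid the trace. The super-symmetric/antisymmetric splitting always gives $\mg\otimes\mg=(\mg\odot\mg)\oplus(\mg\wedge\mg)$, and on $\mg\odot\mg$ the operator $q$ of \eqref{FormCartan2} remains a genuine idempotent by Remark \ref{qsymm}, whence $\mg\odot\mg=\mathrm{Im}(q)\oplus\ker(q)=K^{m|2n}_{\epsilon_1+\epsilon_2+\epsilon_3+\epsilon_4}\oplus\gamma$ with $K^{m|2n}_{\epsilon_1+\epsilon_2+\epsilon_3+\epsilon_4}$ the irreducible second Cartan product. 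This already produces $\mg\otimes\mg\cong K^{m|2n}_{\epsilon_1+\epsilon_2+\epsilon_3+\epsilon_4}\oplus\gamma\oplus(\mg\wedge\mg)$, and the whole content is reduced to showing that $\gamma$ is not completely reducible. Here the governing mechanism is the degeneration of $p\circ\phi$ on $\mC^{m|2n}\odot\mC^{m|2n}$ read off from \eqref{pphi} and \eqref{tracepphi}: this map annihilates $K^{m|2n}_0$ when $M=1$ and $K^{m|2n}_{2\epsilon_1}$ when $M=2$. Consequently $\phi$ carries one of these summands into $\ker p\subset\gamma$, and the trace map realizes $\gamma$ as the middle term of a short exact sequence $0\to K^{m|2n}_{2\epsilon_1+2\epsilon_2}\oplus N\to\gamma\xrightarrow{\,p\,}Q\to0$ with $(N,Q)=(K^{m|2n}_0,K^{m|2n}_{2\epsilon_1})$ for $M=1$ and $(N,Q)=(K^{m|2n}_{2\epsilon_1},K^{m|2n}_0)$ for $M=2$.

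The main obstacle is to prove that this sequence does not split, i.e. that the two non-isomorphic irreducibles $N$ and $Q$ are glued by a nonzero extension class rather than sitting as independent summands of $\gamma$. I would establish this concretely rather than via abstract $\mathrm{Ext}$ computations: starting from a symmetric tensor with nonzero trace representing $Q$, I would apply the $\mg$-action and the trace pairing to show that the submodule it generates necessarily contains the invariant/trace tensor spanning $N$, so that no $\mg$-equivariant projection of $\gamma$ onto a complement of $K^{m|2n}_{2\epsilon_1+2\epsilon_2}\oplus N$ can exist. This is the exact analogue, for the adjoint tensor square, of the indecomposability of $\cH_2$ used in the case $M=0$, and a highest-weight-vector count based on Lemma \ref{maxvectors} serves as a consistency check: at $M\in\{1,2\}$ the number of genuine highest weight vectors available inside $\gamma$ falls short of its number of composition factors, which already rules out complete reducibility. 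Carrying out this non-splitting argument rigorously, rather than merely identifying the composition factors, is the delicate part of the proof.
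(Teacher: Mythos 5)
Your treatment of the case $M=0$ is sound and essentially the paper's: Lemma \ref{subA} still applies, the traceless block $\cA$ is decomposed as in the proof of Theorem \ref{decomposition}, and the failure of complete reducibility is correctly located in $\mC^{m|2n}\odot\mC^{m|2n}$ (the metric is traceless when $M=0$, so the trivial module sits inside the traceless tensors, matching the indecomposable $\cH_2$ of Theorem \ref{sphHarm}). The genuine gap is in the cases $M\in\{1,2\}$, where your proposed submodule structure of $\gamma$ is incorrect, and is in fact refuted by the very formulas \eqref{pphi} and \eqref{tracepphi} that you cite. Since $q\circ\phi$ vanishes on the super symmetric part as a contraction-free tensor identity (valid for every $M$), both summands of $\phi(\mC^{m|2n}\odot\mC^{m|2n})$ lie inside $\gamma=\mathrm{Ker}(q)$. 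Now take $M=2$: by \eqref{tracepphi} the composite $K^{m|2n}_0\xrightarrow{\phi}\gamma\xrightarrow{p}K^{m|2n}_0$ is $\tfrac{1}{2}(M-1)=\tfrac12$ times the identity, so $\phi(K^{m|2n}_0)$ gives a $\mg$-equivariant splitting of your surjection $p:\gamma\to Q=K^{m|2n}_0$; the extension you set out to prove nonsplit actually splits, and your planned computation (``the submodule generated by a lift of $Q$ contains $N$'') would disprove itself, since the lift $\phi(g)$ generates exactly an irreducible trivial summand. Symmetrically, at $M=1$ formula \eqref{pphi} gives $\tfrac{M-2}{4}=-\tfrac14$ on the traceless part, so $\phi(K^{m|2n}_{2\epsilon_1})$ splits your claimed quotient $Q=K^{m|2n}_{2\epsilon_1}$. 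In each case it is the factor that \emph{survives} $p\circ\phi$ which splits off cleanly, not the annihilated one that glues to it.

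The actual mechanism, which the paper isolates from the proof of Lemma \ref{subA}, is that $\mathrm{Ker}(\chi)\cap\mathrm{Im}(\chi)\neq0$ for $\chi=\phi\circ p$: the factor annihilated by $p\circ\phi$ ($K^{m|2n}_{2\epsilon_1}$ for $M=2$, $K^{m|2n}_0$ for $M=1$) occurs \emph{twice} as a composition factor of $\gamma$ --- once as the submodule $\phi(N)\subset\mathrm{Ker}(p)$ and once as a quotient reached by $p$ from outside $\mathrm{Im}(\phi)$ --- so $\gamma$ has at least four composition factors, not the three your short exact sequence asserts (at these atypical values the module generated by the weight-$(2\epsilon_1+2\epsilon_2)$ vector is not the generic irreducible, which is where the extra factor hides). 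This also exposes the internal inconsistency of your ``consistency check'': with three highest weight vectors and your claimed three composition factors, nothing ``falls short''. The paper's proof is precisely the correct refinement of that idea: one shows all three highest weight vectors of $\gamma$ (namely $X_{\epsilon_1+\epsilon_2}\odot X_{\epsilon_1+\epsilon_2}$ and the two inside $\phi(\mC^{m|2n}\odot\mC^{m|2n})$) lie in the \emph{proper} subrepresentation $\left(\mathrm{Ker}(\chi)+\mathrm{Im}(\chi)\right)\cap\gamma$, whence no invariant complement can exist, and the count of Lemma \ref{maxvectors} then also delivers $\mathrm{Im}(q)\cong K^{m|2n}_{\epsilon_1+\epsilon_2+\epsilon_3+\epsilon_4}$ --- an identification you asserted without proof. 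To repair your argument you would have to replace the $N$--$Q$ extension by this doubled-factor gluing and prove nonsplitness via the common-submodule argument, which is exactly the paper's route.
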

\begin{proof}
Firstly we consider the case $M=0$, then Lemma \ref{embeddingCC} and Lemma \ref{subA} hold. The proof that $\cA=\cA_1\oplus\cA^{im}_2\oplus\cA_2^{ker}$ holds is then completed similarly to the proof of Theorem \ref{decomposition}.

For the case $M=1,2$, we consider the morphism $\chi$ from the proof of Lemma \ref{subA} restricted to $\mg\odot\mg$. It is clear from the proof of Lemma \ref{subA} that for these dimensions Ker$(\chi)\cap$Im$(\chi)\not=0$, in particular $K^{m|2n}_{2\epsilon_1}\subset$ Ker$(\chi)\cap$Im$(\chi)$ if $M=2$ and $K^{m|2n}_{0}\subset$Ker$(\chi)\cap$Im$(\chi)$ if $M=1$.

Therefore Ker$(\chi)+$Im$(\chi)\not=\mg\odot\mg$. We also consider the morphism $q$ defined in equation \eqref{FormCartan2} but now defined on the entire space $\mg\odot\mg$, as in Remark \ref{qsymm}. This leads to 
\begin{eqnarray*}
\mg\otimes\mg&=&\mbox{Im}(q)\oplus  \gamma\oplus \mg\wedge\mg.
\end{eqnarray*}
with Im$(q)\subset$ Ker$(\chi)$ and Im$(\chi)\subset$ Ker$(q)=\,\gamma$. The module Im$(q)$ must contain at least one highest weight vector. The module $ \gamma$ has at least three, the vector $X_{\epsilon_1+\epsilon_2}\odot X_{\epsilon_1+\epsilon_2}$ and the two highest weight vectors inside $\mC^{m|2n}\odot\mC^{m|2n}$. Finally the module $\mg\wedge\mg$ has at least two, $X_{\epsilon_1+\epsilon_2}\otimes X_{\epsilon_1+\epsilon_3}-X_{\epsilon_1+\epsilon_3}\otimes X_{\epsilon_1+\epsilon_2}$ and the highest weight vector of $\mC^{m|2n}\wedge\mC^{m|2n}$. Lemma \ref{maxvectors} implies that these are all of the highest weight vectors in $\mg\otimes\mg$, so in particular $ \gamma$ has exactly three highest weight vectors.

Next we prove that $ \gamma$ has a subrepresentation that contains the three highest weight vectors, which implies that $ \gamma$ is not completely reducible. This subrepresentation is given by
\begin{eqnarray*}
\left(\mbox{Ker}(\chi)+\mbox{Im}(\chi)\right)\cap  \gamma.
\end{eqnarray*}
If this were equal to $ \gamma$ it would imply that $\left(\mbox{Ker}(\chi)+\mbox{Im}(\chi)\right)\cap\mbox{Im}(q)$ is not equal to Im$(q)$ which is a contradiction with $\mbox{Im}(q)\subset\mbox{Ker}(\chi)$. The two highest weight vectors of $\mC^{m|2n}\odot\mC^{m|2n}$ are contained in this subrepresentation as they are inside Im$(\chi)$ and from the classical case it follows that also $X_{\epsilon_1+\epsilon_2}\otimes X_{\epsilon_1+\epsilon_2}$ is contained. The identification Im$(q)\cong K^{m|2n}_{\epsilon_1+\epsilon_2+\epsilon_3+\epsilon_4}$ then follows from the fact that the other five weights are already assigned to the other representations.
\end{proof}

Therefore we obtain that $\mg\topa\mg\cong K^{m|2n}_{2\epsilon_1+2\epsilon_2}\subset\mg\otimes\mg$ is still well-defined if $M=0$ and $\mg\topb\mg\cong K^{m|2n}_{\epsilon_1+\epsilon_2+\epsilon_3+\epsilon_4}\subset\mg\otimes\mg$ is still well-defined if $M=0,1,2$ (as in Remark \ref{qsymm}).

\section{The Cartan product}
\label{secCartan}
One of the results in \cite{MR2130630} is that for $V$ a finite dimensional irreducible $\mathfrak{sl}(n)$-module, the property
\begin{eqnarray}
\label{formCartan}
\circledcirc^kV&=&\left(\circledcirc^{k-1}V\right)\otimes V\cap V\otimes\left(\circledcirc^{k-1}V\right)
\end{eqnarray}
holds. In this section we prove that this property can be extended to an arbitrary semisimple Lie algebra. With an extra assumption we prove that it also holds for semisimple Lie superalgebras. These results are stated in Theorem \ref{thmCartan} and generalized in Corollary \ref{claimEast} which proves the conjecture made in \cite{MR2130630}. Because of examples, there is reason to believe that property \eqref{formCartan} can still hold without that extra assumption, but with a slight reformulation. For $V$ a finite dimensional irreducible representation with highest weight vector $v_+$ of a semisimple Lie superalgebra $\mg$, the question is whether the property
\begin{eqnarray}
\label{herformCartan}
\cU(\mg)\cdot v_+^{\otimes k}&=&\left(\cU(\mg)\cdot  v_+^{\otimes k-1}\,\otimes \,V\right)\cap \left(V\,\otimes\,\cU(\mg)\cdot  v_+^{\otimes k-1}\right)
\end{eqnarray}
holds for $k>2$. 

In case $V=\mC^{m|2n}$ and $\mg=\osp$, equation \eqref{herformCartan} can be obtained from the results in \cite{OSpHarm}, repeated in Section \ref{preli}. So in this case equation \eqref{herformCartan} holds even when $\odot^k V$ is not completely reducible, which happened sometimes in case $m-2n\in-2\mN$.

In case $V=\mg$ with $\mg=\osp$ for both positive root systems there is a direct application of the results to connect the Joseph-like ideals to annihilator ideals of representations further in this paper. In case $\cU(\mg)\cdot v_+^{\otimes k}$ is irreducible, statement \eqref{herformCartan} reduces to the classical one \eqref{formCartan}. Even though it is clear from the aforementioned example, that this property is not always necessary to obtain equation \eqref{herformCartan}, we only manage to prove this result in cases where it turns out that $\cU(\mg)\cdot v_+^{\otimes k}$ is irreducible, in Theorem \ref{Cartanosp}.

In this section we first consider an arbitrary semisimple Lie superalgebra $\mg$ with a fixed positive root system and an arbitrary irreducible finite dimensional representation $V$. The set of positive roots is denoted by $\Delta^+$ and for each such root we fix the positive and negative root vectors $X_\alpha$ and $Y_\alpha$. The corresponding triangular decomposition is given by $\mg=\mathfrak{n}^-+\mathfrak{h}+\mathfrak{n}$.

First we show that the proposed identity is equivalent to other formulations. Therefore we recall that $V$ has a non-degenerate hermitian form $\langle\cdot,\cdot\rangle$ such that
\begin{eqnarray*}
\langle X_\alpha u,w\rangle&=&(-1)^{|\alpha||u|}\langle u, Y_\alpha w\rangle
\end{eqnarray*}
holds, see Lemma 1 in \cite{Tensor}. This is a contravariant hermitian form. Furthermore different weight spaces are orthogonal to each other. This property immediately extends to the tensor power representations $\otimes^k V$ where the form is defined iteratively by $\langle v\otimes a,u\otimes b\rangle=(-1)^{|a||u|}\langle v,u\rangle\langle a,b\rangle$ for $a,b\in\otimes^{k-1}V$ and $v,u\in V$. We fix notations 
\begin{eqnarray}
\label{defBk}
\beta_2=\cU(\mg)\cdot(v_+\otimes v_+)&\quad\mbox{and}\quad& \beta_k=\beta_{k-1}\otimes V\cap V\otimes \beta_{k-1}\mbox{ for }k>2\\
\nonumber
I_k=\beta_{k}^\perp&&\mbox{with respect to $\langle\cdot,\cdot\rangle$ for }k\ge 2
\end{eqnarray}
for subrepresentations in $\otimes^kV$. Equation \eqref{herformCartan} can then be restated as $\beta_k=\cU(\mg)\cdot v_+^{\otimes k}$ for $k>2$.

It follows immediately by induction that the properties 
\begin{equation}
\label{Bsymm}
\cU(\mg)\cdot v_+^{\otimes k}\,\subset\,\beta_k\,\subset\, \odot^k V
\end{equation} 
hold and that 
\begin{equation}
\label{altdefIk}
I_{k}=I_{k-1}\otimes V+ V\otimes I_{k-1},
\end{equation}
which can be used as an alternative definition for the representation $I_k$.
\begin{lemma}
\label{Lemequiv}
For $\beta_k$ and $I_k$ as defined in equation \eqref{defBk}, the following statements are equivalent:
\begin{eqnarray*}
&(1)& \mbox{ The representation $\beta_k$ is generated by the highest weight vector of $\otimes^kV$: }\beta_k=\cU(\mg)\cdot v_+^{\otimes k}.\\
&(2)& \mbox{ The representation $\otimes^kV/I_k$ has only one highest weight vector, $v_+^{\otimes k}+I_k$.}\\
&(3) &\mbox{ Any representation $K\subset\otimes^k V$ that properly contains $I_k$, also contains $v_+^{\otimes k}$.}\\
&(4)& \mbox{ For any vector $x\in\otimes^k V$, with $x\not\in I_k$, $v_+^{\otimes k}\in\cU(\mg)\cdot x$ holds.}
\end{eqnarray*}
\end{lemma}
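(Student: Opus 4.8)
The plan is to prove the four statements equivalent via the cycle $(1)\Rightarrow(4)\Rightarrow(3)\Rightarrow(2)\Rightarrow(1)$, using the contravariant form throughout. Since the form on $V$ is non-degenerate, so is the induced form on $\otimes^kV$, and hence $I_k=\beta_k^\perp$ forces $\beta_k=I_k^\perp$; the form then descends to a non-degenerate pairing
\begin{eqnarray*}
\beta_k\times\left(\otimes^kV/I_k\right)\to\mC,&\qquad&(u,w+I_k)\mapsto\langle u,w\rangle .
\end{eqnarray*}
I will use two facts constantly: the maximal weight $k\lambda$ of $\otimes^kV$ (with $\lambda$ the highest weight of $V$) occurs with multiplicity one, spanned by $v_+^{\otimes k}$, and $\langle v_+^{\otimes k},v_+^{\otimes k}\rangle\neq0$, so that $v_+^{\otimes k}\notin I_k$ and its class is a genuine highest weight vector of $\otimes^kV/I_k$. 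Throughout, $\omega$ denotes the anti-involution of $\cU(\mg)$ with $\omega(X_\alpha)=Y_\alpha$ and $\omega|_{\mathfrak{h}}=\mathrm{id}$, for which $\langle Zu,w\rangle$ and $\langle u,\omega(Z)w\rangle$ agree up to a nonzero scalar, and I will freely use that every $\mg$-submodule is the sum of its weight spaces.

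For $(1)\Rightarrow(4)$, given $x\notin I_k$ I would first use non-degeneracy to pick $u\in\beta_k$ with $\langle u,x\rangle\neq0$; by $(1)$ we may write $u=Z\cdot v_+^{\otimes k}$, and moving $Z$ across the form gives $\langle v_+^{\otimes k},\omega(Z)x\rangle\neq0$. Hence $y=\omega(Z)x\in\cU(\mg)\cdot x$ has a nonzero component in the weight space $k\lambda$; since that weight space is $\mC\,v_+^{\otimes k}$ and $\cU(\mg)\cdot x$ is a sum of weight spaces, $v_+^{\otimes k}\in\cU(\mg)\cdot x$. The step $(4)\Rightarrow(3)$ is immediate: for $K\supsetneq I_k$ choose $x\in K\setminus I_k$ and apply $(4)$ to get $v_+^{\otimes k}\in\cU(\mg)\cdot x\subset K$.

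For $(3)\Rightarrow(2)$, let $w+I_k$ be an arbitrary highest weight vector of $\otimes^kV/I_k$, of weight $\mu$. Then $K=\cU(\mg)\cdot w+I_k$ properly contains $I_k$, so $(3)$ yields $v_+^{\otimes k}+I_k\in\cU(\mg)\cdot(w+I_k)$; as this module has all weights $\le\mu$ while $k\lambda$ is maximal, $\mu=k\lambda$, and multiplicity one forces $w+I_k$ proportional to $v_+^{\otimes k}+I_k$. Finally, for $(2)\Rightarrow(1)$ I would exploit the pairing: the adjoint relation (in the direction $\langle Y_\alpha u,w\rangle\sim\langle u,X_\alpha w\rangle$) identifies the highest weight vectors $(\otimes^kV/I_k)^{\mathfrak{n}}$ with the annihilator of $\mathfrak{n}^-\beta_k$, so that
\begin{eqnarray*}
\dim\left(\otimes^kV/I_k\right)^{\mathfrak{n}}&=&\dim\left(\beta_k/\mathfrak{n}^-\beta_k\right).
\end{eqnarray*}
By $(2)$ the left side is $1$, so $\beta_k=\mC\,v_+^{\otimes k}+\mathfrak{n}^-\beta_k$, and a graded Nakayama argument (iterating, using finite dimensionality and that $\mathfrak{n}^-$ strictly lowers weights) gives $\beta_k=\cU(\mathfrak{n}^-)\cdot v_+^{\otimes k}=\cU(\mg)\cdot v_+^{\otimes k}$, which is $(1)$.

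The main obstacle I anticipate is that for a general semisimple Lie superalgebra the contravariant form need only be non-degenerate, not definite, so $\beta_k\cap I_k=\mathrm{rad}(\langle\cdot,\cdot\rangle|_{\beta_k})$ may be nonzero and $\otimes^kV$ need not split as $\beta_k\oplus I_k$. This blocks the naive strategy of reading highest weight vectors off a complementary submodule, and is exactly why I route the argument through the induced pairing between $\beta_k$ and the quotient $\otimes^kV/I_k$, which is non-degenerate regardless. The delicate point is then the dimension identity in $(2)\Rightarrow(1)$, which needs the adjoint relation in both directions $X_\alpha\leftrightarrow Y_\alpha$ (Lemma 1 in \cite{Tensor}) and the dual interplay between $\mathfrak{n}$-invariance and quotienting by $\mathfrak{n}^-$.
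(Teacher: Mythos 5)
Your proof is correct and follows essentially the same route as the paper's: the crux in both is the duality, via the non-degenerate contravariant form, between the $\mathfrak{n}$-invariants of $\otimes^kV/I_k$ and the cokernel of $\mathfrak{n}^-$ acting on $\beta_k$, combined with the multiplicity-one maximal weight $k\lambda$ of $\otimes^kV$. You merely reorganize the paper's pairwise equivalences into the cycle $(1)\Rightarrow(4)\Rightarrow(3)\Rightarrow(2)\Rightarrow(1)$ and fill in details the paper leaves implicit, such as the graded Nakayama iteration in $(2)\Rightarrow(1)$ and the direct form-theoretic argument for $(1)\Rightarrow(4)$.
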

\begin{proof}
First we prove the equivalency between the first and the second statement. Because $\beta_k$ and $I_k$ satisfy $\beta_k^\perp=I_k$ with respect to a non-degenerate contravariant hermitian form, it follows that the codimension of $\mathfrak{n}^-\cdot\beta_k$ in $\beta_k$ is equal to the dimension of the space of highest weight vectors (vectors annihilated by $\mn$) in $\otimes^kV/I_k$. The first equivalency is a consequence of this.

Each representation $K\supset I_k$ has a vector $x$ of maximal weight that is not inside $I_k$. Positive root vectors acting on this vector $x$ must map it to $I_k$, this implies that $x+I_k$ is a highest weight vector in $\otimes^kV/I_k$. If property two holds, this vector $x$ has to correspond to $v_+^{\otimes k}$ and property three holds. Similarly each highest weight vector in $\otimes^kV/I_k$ can be used to construct a representation $K\supset I_k$, so property three implies property two.

The equivalence of the third and fourth property is straightforward.\end{proof}

\begin{lemma}
\label{Lemequiv2}
Consider $\beta_k$ as defined in equation \eqref{defBk}. If $\beta_{k-1}=\cU(\mg)\cdot v_+^{\otimes k-1}$ holds, then $\beta_k$ satisfies $\beta_k=v_+^{\otimes k}+(\mathfrak{n}^-\cdot\otimes^kV\cap \beta_k)$.
\end{lemma}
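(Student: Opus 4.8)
The plan is to prove the nontrivial inclusion $\beta_k\subset\mathbb{C}\,v_+^{\otimes k}+\mathfrak{n}^-\cdot\otimes^kV$; the reverse inclusion is immediate, since $v_+^{\otimes k}\in\beta_k$ by \eqref{Bsymm} and $\mathfrak{n}^-\cdot\otimes^kV\cap\beta_k\subset\beta_k$. Once this inclusion is known the claimed identity follows at once: writing $x=c\,v_+^{\otimes k}+y$ with $y\in\mathfrak{n}^-\cdot\otimes^kV$, the vector $y=x-c\,v_+^{\otimes k}$ again lies in $\beta_k$, hence $y\in\mathfrak{n}^-\cdot\otimes^kV\cap\beta_k$. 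As $\beta_k$ is $\mathfrak{h}$-stable it is a sum of weight spaces, and since the only weight-$k\lambda$ vectors of $\otimes^kV$ are the multiples of $v_+^{\otimes k}$ (with $\lambda$ the highest weight of $V$), the whole problem reduces to the single statement: every weight vector $x\in\beta_k$ of weight $\nu\neq k\lambda$ lies in $\mathfrak{n}^-\cdot\otimes^kV$.

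First I would exploit the hypothesis. Since $\beta_{k-1}=\cU(\mg)\cdot v_+^{\otimes(k-1)}$ is a highest weight module, the Poincar\'e--Birkhoff--Witt theorem gives $\beta_{k-1}=\mathbb{C}\,v_+^{\otimes(k-1)}+\mathfrak{n}^-\cdot\beta_{k-1}$, the summand $\mathfrak{n}^-\cdot\beta_{k-1}$ being exactly the sum of the weight spaces of weight $<(k-1)\lambda$. Next, using $\beta_k\subset V\otimes\beta_{k-1}$ together with the super Leibniz rule $Y_\alpha\cdot(u\otimes w)=(Y_\alpha u)\otimes w+(-1)^{|\alpha||u|}u\otimes(Y_\alpha w)$, I would establish the reduction
\[
V\otimes\beta_{k-1}\ \subset\ v_+\otimes\beta_{k-1}+\mathfrak{n}^-\cdot\otimes^kV .
\]
This is proved by downward induction on the weight of the first tensor factor: any weight vector $e\in V$ of weight $<\lambda$ can be written as $e=\sum_\alpha Y_\alpha g_\alpha$ with each $g_\alpha$ of strictly higher weight (because $V$ is irreducible), and the identity $(Y_\alpha g_\alpha)\otimes w=Y_\alpha\cdot(g_\alpha\otimes w)-(-1)^{|\alpha||g_\alpha|}g_\alpha\otimes(Y_\alpha w)$ pushes the lowering operator onto the whole tensor (landing in $\mathfrak{n}^-\cdot\otimes^kV$), while the remaining term has a first factor of higher weight and a second factor still in $\beta_{k-1}$. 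Combining the two displays, every $x\in\beta_k$ satisfies $x\equiv v_+\otimes b\pmod{\mathfrak{n}^-\cdot\otimes^kV}$ for some $b\in\beta_{k-1}$; if $\nu\neq k\lambda$ then $b$ has weight $\neq(k-1)\lambda$, so $b\in\mathfrak{n}^-\cdot\beta_{k-1}$, and one more application of the Leibniz rule shows that $x$ is congruent modulo $\mathfrak{n}^-\cdot\otimes^kV$ to a sum of error terms $\sum_\alpha\pm(Y_\alpha v_+)\otimes c_\alpha$ lying in $(\mathfrak{n}^-\cdot V)\otimes\beta_{k-1}$.

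The hard part will be to show that these error terms themselves lie in $\mathfrak{n}^-\cdot\otimes^kV$. This fails if one uses only a single tensor factor: one has $(\mathfrak{n}^-\cdot V)\otimes\beta_{k-1}\not\subset\mathfrak{n}^-\cdot\otimes^kV$ in general (already for $\mg=\mathfrak{sl}(2)$, $k=2$, the vector $(Y v_+)\otimes v_+$ is not in the image of $\mathfrak{n}^-$, whereas the symmetric combination $(Yv_+)\otimes v_++v_+\otimes(Yv_+)$ is). The decisive extra input is that $\beta_k\subset\odot^kV$ by \eqref{Bsymm}, so one may work modulo $\mathfrak{n}^-\cdot\odot^kV=\mathfrak{n}^-\cdot\otimes^kV\cap\odot^kV$ and replace each error term by its symmetrization; since $c_\alpha\in\beta_{k-1}\subset\odot^{k-1}V$ is already symmetric, the symmetrized error term is the symmetric product $(Y_\alpha v_+)\cdot c_\alpha$, and $Y_\alpha(v_+\cdot c_\alpha)=(Y_\alpha v_+)\cdot c_\alpha+v_+\cdot(Y_\alpha c_\alpha)$ trades it, modulo $\mathfrak{n}^-\cdot\odot^kV$, for $v_+\cdot(Y_\alpha c_\alpha)$ with $Y_\alpha c_\alpha\in\beta_{k-1}$. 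Keeping the surviving terms symmetric by using both defining conditions $\beta_k\subset\beta_{k-1}\otimes V$ and $\beta_k\subset V\otimes\beta_{k-1}$ simultaneously, one closes the argument by downward induction on $\nu$. Equivalently, and perhaps more cleanly, I would reformulate the statement through the contravariant Hermitian form: by contravariance and non-degeneracy, $\mathfrak{n}^-\cdot\otimes^kV$ is the orthogonal complement of the space $H$ of $\mathfrak{n}$-singular vectors of $\otimes^kV$, so the desired inclusion becomes $H_{\neq k\lambda}\subset I_k=I_{k-1}\otimes V+V\otimes I_{k-1}$ (using \eqref{altdefIk}); this is then attacked by splitting a singular vector along $\otimes^{k-1}V=\beta_{k-1}\oplus I_{k-1}$ and invoking statement $(2)$ of Lemma \ref{Lemequiv} at level $k-1$, which the hypothesis provides. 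I expect the symmetrization and cancellation of the error terms to be the only genuinely delicate point.
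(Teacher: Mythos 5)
Your opening reductions are sound and in fact run parallel to the first half of the paper's own proof: the paper likewise uses the hypothesis to write $\beta_{k-1}\otimes V=\cU(\mn^-)\cdot\left(v_+^{\otimes k-1}\otimes V\right)$ and to put any $x\in\beta_k$ in the form $x=v_+^{\otimes k-1}\otimes u+x_1$ with $x_1\in\mn^-\cU(\mn^-)\cdot(v_+^{\otimes k-1}\otimes V)\subset\mn^-\cdot\otimes^kV$. The genuine gap sits exactly at the point you flag as delicate, and the moves you propose cannot close it. The trade $(Y_\alpha v_+)\cdot c_\alpha\equiv\mp\,v_+\cdot(Y_\alpha c_\alpha)\pmod{\mn^-\cdot\odot^kV}$ is precisely the inverse of the Leibniz step that produced the error terms, and it preserves the weight $\nu$; applying it returns you verbatim to $v_+\cdot b$, so your ``downward induction on $\nu$'' has no decreasing quantity and never terminates. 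Worse, a term-by-term reduction is false in principle: already for $\mg=\mathfrak{sl}(2)$ with $V$ the adjoint representation (set $v_+=E$, $w=Y\cdot v_+$, $z=Y\cdot w$ up to scalars), the weight-zero part of $\mn^-\cdot\odot^2V$ is the single line spanned by $Y^2(v_+\cdot v_+)$, a fixed linear combination of $w\cdot w$ and $v_+\cdot z$; the individual symmetric products $v_+\cdot(Y^2v_+)$ and $w\cdot w$ do \emph{not} lie in $\mn^-\cdot\odot^2V$. So whether the surviving representative $v_+\cdot b$ lies in $\mn^-\cdot\odot^kV$ depends on the precise combination inherited from $x\in\beta_k$ --- essentially the statement being proved --- and your symmetrized bookkeeping discards exactly the information needed to detect it.

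Your ``cleaner'' dual route has a second, independent gap: splitting a singular vector along $\otimes^{k-1}V=\beta_{k-1}\oplus I_{k-1}$ presupposes $\beta_{k-1}\cap I_{k-1}=0$, i.e.\ nondegeneracy of the contravariant form on $\beta_{k-1}$, equivalently the existence of a complement. The lemma deliberately avoids this assumption --- it enters only later, in Theorem \ref{thmCartan} --- and it genuinely fails in the super setting the lemma is designed for: the hypothesis $\beta_{k-1}=\cU(\mg)\cdot v_+^{\otimes k-1}$ is compatible with $\beta_{k-1}$ being an indecomposable reducible highest weight module (compare $\odot^k\mC^{p|2n}$ with $p-2n\in-2\mN$, Theorem \ref{sphHarm}), in which case the radical $\beta_{k-1}\cap I_{k-1}$ is nonzero. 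The missing idea, supplied by the paper's second paragraph, is positional rather than weight-theoretic: inside $\cU(\mn^-)\cdot\left(v_+^{\otimes k-1}\otimes V\right)$, any term carrying $v_+$ in the last slot (a fortiori in all of the last $k-1$ slots) can only originate from the $v_+$-component of the last tensor factor, hence from an element of $\cU(\mn^-)\cdot v_+^{\otimes k}$ --- and such elements are supersymmetric on their own by \eqref{Bsymm}, so they cannot compensate the asymmetry of $v_+^{\otimes k-1}\otimes u$. Supersymmetry of $x$ then forces $u\in\mC v_+$ in a single stroke, with no induction on weights and no symmetrizer at all; if you want to salvage your write-up, replace the trading scheme by this bookkeeping of which slots the lowering operators can reach.
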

\begin{proof}
If $\beta_{k-1}=\cU(\mg)\cdot v_+^{\otimes k-1}=\cU(\mn^-)\cdot v_+^{\otimes k-1}$ holds, the representation $\beta_{k-1}\otimes V$ is equal to $\cU(\mn^-)\cdot\left(v_+^{\otimes k-1}\otimes V\right)$, as follows from the Leibniz rule of tensor product representations.

Now we assume that $x\in\beta_k$ is not inside $\mn^-\cdot\otimes^k V$. Since it is an element of $\cU(\mn^-)\cdot\left(v_+^{\otimes k-1}\otimes V\right)$
it can be written as
\[x=v_+^{\otimes k-1}\otimes u+x_1\]
with $u\in V$ and $x_1\in \mathfrak{n}^-\cU(\mn^-)\cdot(v_+^{\otimes k-1}\otimes V)$. Because $x$ is inside $\odot^k V$ by equation \eqref{Bsymm}, $x_1$ must contain a term $u\otimes v_+^{\otimes k-1}$. However $x_1\in\cU(\mn^-)\cdot\left(v_+^{\otimes k-1}\otimes V\right)$ which implies that this term must come from an element in $\cU(\mn^-)\cdot v_+^{\otimes k}$. Elements inside $\cU(\mn^-)\cdot v_+^{\otimes k}$ are symmetric on their own, so can never be used to compensate for the lack of symmetry of other terms. 

This shows that $u$ must be proportional to $v_+$ which completes the proof of the lemma.
\end{proof}

When $\cU(\mg)\cdot v_+^{\otimes k}$ is irreducible it is isomorphic to $M_{k\lambda}$ if $V\cong M_\lambda$. If $\cU(\mg)\cdot v_+^{\otimes k}$ also has a complement representation $W$, we can use the notation $\circledcirc^k V=\cU(\mg)\cdot v_+^{\otimes k}\subset \otimes^k V$. We say that in those cases $\circledcirc^k V$ is well defined and the property $\otimes^k V=\circledcirc^k V\oplus W$ holds.

\begin{theorem}
\label{thmCartan}
Consider a semisimple Lie superalgebra $\mg$ and a finite dimensional representation $V$ with highest weight vector $v_+$. If $\beta_j$, defined in equation \eqref{defBk}, has a complement representation inside $\otimes^j V$ for $2\le j\le k$, $\circledcirc^j V\subset\otimes^j V$ is well-defined and the properties
\begin{eqnarray*}
\beta_j&=&\cU(\mg)\cdot v_+^{\otimes j}=\circledcirc^j V\quad\mbox{ and }\\
\circledcirc^jV&=&\left(\circledcirc^{j-1}V\right)\otimes V\cap V\otimes\left(\circledcirc^{j-1}V\right)
\end{eqnarray*}
hold for $j\le k$. In case $\mg$ is a semisimple Lie algebra, the extra condition on $\beta_j$ is not necessary.\end{theorem}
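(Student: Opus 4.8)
The plan is to prove the identity $\beta_j=\cU(\mg)\cdot v_+^{\otimes j}$ by induction on $j$, running from $j=2$ up to $k$; the Cartan-product identity and the well-definedness of $\circledcirc^j V$ will then be immediate consequences. The base case $j=2$ is nothing but the definition $\beta_2=\cU(\mg)\cdot(v_+\otimes v_+)$ in \eqref{defBk}, together with the hypothesis that $\beta_2$ has a complement. So I fix $j$ with $3\le j\le k$ and assume $\beta_{j-1}=\cU(\mg)\cdot v_+^{\otimes j-1}$.

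First I would invoke Lemma \ref{Lemequiv2}, whose hypothesis is exactly this induction hypothesis, to obtain
\[
\beta_j \;=\; \mC\, v_+^{\otimes j} \;+\; \bigl(\mn^-\cdot\otimes^j V\,\cap\,\beta_j\bigr).
\]
Writing $N=\mn^-\cdot\otimes^j V\cap\beta_j$, the whole step is reduced to showing $N=\mn^-\cdot\beta_j$: once this holds, substitution gives $\beta_j=\mC v_+^{\otimes j}+\mn^-\cdot\beta_j$, and iterating this relation (using that $\mn^-$ acts nilpotently on the finite-dimensional $\beta_j$ since it strictly lowers weights) collapses it to $\beta_j=\cU(\mn^-)\cdot v_+^{\otimes j}=\cU(\mg)\cdot v_+^{\otimes j}$, as wanted. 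The inclusion $\mn^-\cdot\beta_j\subset N$ is automatic, since $\mn^-\cdot\beta_j$ lies both in $\beta_j$ (a submodule) and in $\mn^-\cdot\otimes^j V$.

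The remaining inclusion $N\subset\mn^-\cdot\beta_j$ is where the complement hypothesis enters, and this is the key step. Since $\beta_j$ is assumed to have a subrepresentation $W$ with $\otimes^j V=\beta_j\oplus W$, the projection $P\colon\otimes^j V\to\beta_j$ along $W$ is an even $\mg$-equivariant map with $P|_{\beta_j}=\mathrm{id}$. Given $x\in N$, I write $x=\sum_i Y_i\cdot z_i$ with $Y_i\in\mn^-$ and $z_i\in\otimes^j V$; applying $P$, using $P(x)=x$ and the equivariance $P(Y_i\cdot z_i)=Y_i\cdot P(z_i)$ (here $|P|=0$, so no sign appears), yields $x=\sum_i Y_i\cdot P(z_i)\in\mn^-\cdot\beta_j$. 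This completes the inductive step, hence the equality $\beta_j=\cU(\mg)\cdot v_+^{\otimes j}$ for all $2\le j\le k$. Conceptually this establishes condition $(1)$ of Lemma \ref{Lemequiv}, equivalently that $\otimes^j V/I_j$ carries a single highest weight vector.

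With $\beta_j=\cU(\mg)\cdot v_+^{\otimes j}$ established and $\beta_j$ possessing a complement by hypothesis, the notation $\circledcirc^j V:=\beta_j$ is well-defined, and the Cartan-product identity follows by feeding the induction hypothesis $\beta_{j-1}=\circledcirc^{j-1}V$ into the defining relation $\beta_j=\beta_{j-1}\otimes V\cap V\otimes\beta_{j-1}$ of \eqref{defBk}. When $\mg$ is an ordinary semisimple Lie algebra, Weyl's complete reducibility theorem makes every finite-dimensional submodule a direct summand, so the hypothesis on $\beta_j$ is vacuous. The main obstacle is precisely what the projection argument circumvents: in the super setting the contravariant hermitian form $\langle\cdot,\cdot\rangle$ is indefinite, so the orthogonal complement $I_j=\beta_j^\perp$ need not be an honest module complement (the radical $\beta_j\cap I_j$ may be nonzero), and one cannot simply identify $\otimes^j V/I_j$ with $\beta_j$; the assumed complement replaces reliance on orthogonality by a genuine equivariant projection.
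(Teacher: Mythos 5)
Your inductive core is correct and coincides with the paper's own argument: the paper likewise observes that the complement hypothesis forces $\mathfrak{n}^-\cdot\otimes^jV\cap\beta_j=\mathfrak{n}^-\cdot\beta_j$ and then feeds this into Lemma \ref{Lemequiv2}; your equivariant-projection computation and the nilpotency iteration from $\beta_j=\mC v_+^{\otimes j}+\mn^-\cdot\beta_j$ down to $\beta_j=\cU(\mn^-)\cdot v_+^{\otimes j}$ merely make explicit two steps the paper leaves implicit, and your remarks on Weyl complete reducibility for ordinary Lie algebras handle the last sentence of the statement. However, there is a genuine gap at the end: you never prove that $\beta_j$ is \emph{irreducible}, and under the paper's conventions this is part of what the theorem asserts. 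In the paragraph preceding the theorem, $\circledcirc^j V$ is declared well-defined only when $\cU(\mg)\cdot v_+^{\otimes j}$ is irreducible (so that it is a copy of $M_{j\lambda}$) \emph{and} admits a complement; you instead declare the notation well-defined from ``generated by a highest weight vector plus complement'' alone. In the super setting that is not enough: a finite-dimensional module generated by a highest weight vector may be reducible (indecomposable with a proper maximal submodule), and a module complement of $\beta_j$ inside $\otimes^j V$ does not pass complements down to submodules of $\beta_j$, precisely because complete reducibility fails. Ironically, your closing paragraph identifies the relevant super-specific tool (the indefinite but non-degenerate contravariant hermitian form) and then abandons it, whereas the paper devotes the entire second half of its proof to exactly this point.

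The missing argument, as the paper gives it, runs as follows. Suppose $\beta_j=\cU(\mn^-)\cdot v_+^{\otimes j}$ contained a highest weight vector $x_+$ not proportional to $v_+^{\otimes j}$; since the $j\lambda$-weight space of $\otimes^jV$ is one-dimensional, $x_+$ has strictly lower weight, so $x_+\perp v_+^{\otimes j}$ by weight orthogonality, while contravariance, $\langle X_\alpha u,w\rangle=(-1)^{|\alpha||u|}\langle u,Y_\alpha w\rangle$, together with $\mn\cdot x_+=0$ gives $x_+\perp\mn^-\cdot\beta_j$; hence $x_+\perp\beta_j$. Non-degeneracy of the form on $\otimes^jV$ then produces $u$ in the complement $W$ with $\langle x_+,u\rangle\not=0$, and applying contravariance in the other direction (writing $x_+=F\,v_+^{\otimes j}$ with $F\in\cU(\mn^-)$ and moving $F$ across the pairing) yields a nonzero multiple of $v_+^{\otimes j}$ inside $\cU(\mg)\cdot u\subset W$, contradicting $\beta_j\cap W=0$. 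Since every nonzero submodule of the finite-dimensional $\beta_j$ contains a highest weight vector, and a proper submodule cannot contain the generator $v_+^{\otimes j}$, irreducibility follows; only then are the identification $\beta_j=\circledcirc^jV$ and the displayed Cartan-product identity legitimate in the paper's sense. Note that indefiniteness of the form is harmless here: only its non-degeneracy and contravariance are used. With this paragraph added, your proof would coincide with the paper's.
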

\begin{proof}
By definition $\beta_2=\cU(\mg)\cdot v_+^{\otimes 2}$. We proceed by induction and assume $\beta_{j-1}=\cU(\mg)\cdot v_+^{\otimes j-1}$. Since $\beta_j$ has a complement representation, $(\mathfrak{n}^-\cdot\otimes^jV\cap \beta_j)=\mn^-\cdot\beta_j$ and therefore Lemma \ref{Lemequiv2} implies that $\beta_{j}=\cU(\mg)\cdot v_+^{\otimes j}$.

To complete the proof we show that the fact that $\beta_j=\cU(\mg)\cdot v_+^{\otimes j}$ has a complement representation implies that it is irreducible. If $\cU(\mg)\cdot v_+^{\otimes j}$ would contain a highest weight vector $x_+$ other than $v_+^{\otimes j}$, this vector would be orthogonal with respect to all other elements of $\cU(\mg)\cdot v_+^{\otimes j}$ for the contravariant hermitian form. Since the form is non-degenerate there exists a vector $u$ in the complement representation of $\cU(\mg)\cdot v_+^{\otimes j}$ such that $\langle x_+,u\rangle\not=0$. By using the contravariance of the hermitian form this implies that $v_+^{\otimes j}\in\cU(\mg)\cdot u$, which is a contradiction. Since $\cU(\mg)\cdot v_+^{\otimes j}$ is generated by a highest weight vector, the lack of other highest weight vectors proves its irreducibility.
\end{proof}

In case $\mg=\mathfrak{osp}(2|2n)$ or $\mg=\mathfrak{gl}(p|q)$, $\mg$ has large classes of star representations for which the tensor power is always completely reducible, see \cite{MR1773773, MR0424886}, so where the extra condition on $\beta_j$ from Theorem \ref{thmCartan} is not needed.

The method in Theorem \ref{thmCartan} can easily be extended to prove the following claim which was made by Eastwood underneath Corollary 1 in \cite{MR2130630}.
\begin{corollary}
\label{claimEast}
For any irreducible finite dimensional representation $V$ of a semisimple Lie algebra, the property 
\begin{eqnarray*}
\left[\left((\circledcirc^pV)\circledcirc (\circledcirc^q V)\right)\,\otimes\, \circledcirc^r V\right]\cap\left[(\circledcirc^pV)\,\otimes\, \left( (\circledcirc^q V)\circledcirc(\circledcirc^r V)\right)\right]&=&\circledcirc^{p+q+r}V
\end{eqnarray*}
holds for $p,q,r$ strictly positive natural numbers.
\end{corollary}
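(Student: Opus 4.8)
The plan is to reduce the claim to the orthogonal-complement formalism of equation \eqref{defBk} and then to a purely combinatorial bookkeeping of where a single ``$I_2$-factor'' is allowed to sit. Since the corollary concerns a semisimple Lie \emph{algebra}, Theorem \ref{thmCartan} applies with no extra hypothesis: for every $k$ the Cartan power is well defined and $\beta_k=\cU(\mg)\cdot v_+^{\otimes k}=\circledcirc^k V$, so its orthogonal complement in $\otimes^k V$ for the contravariant hermitian form is exactly $I_k=(\circledcirc^k V)^\perp$. Writing $N=p+q+r$, I would set $W_1=(\circledcirc^{p+q}V)\otimes(\circledcirc^r V)$ and $W_2=(\circledcirc^p V)\otimes(\circledcirc^{q+r}V)$ as subrepresentations of $\otimes^N V$; here I use $(\circledcirc^p V)\circledcirc(\circledcirc^q V)=\circledcirc^{p+q}V$, which is immediate because both sides equal $\cU(\mg)\cdot v_+^{\otimes(p+q)}$ inside $\otimes^{p+q}V$. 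The goal then becomes $W_1\cap W_2=\circledcirc^N V$.

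First I would pass to orthogonal complements. The form on $\otimes^N V$ is non-degenerate and contravariant, so the complement of a subrepresentation is again a subrepresentation and $(W_1\cap W_2)^\perp=W_1^\perp+W_2^\perp$; it therefore suffices to prove $W_1^\perp+W_2^\perp=I_N$, since then $W_1\cap W_2=(I_N)^\perp=\circledcirc^N V$ at once, both inclusions in one stroke. Using the elementary identity $(A\otimes B)^\perp=A^\perp\otimes(\otimes^b V)+(\otimes^a V)\otimes B^\perp$ for $A\subset\otimes^a V$ and $B\subset\otimes^b V$ (which follows from non-degeneracy by a dimension count), I obtain $W_1^\perp=(I_{p+q}\otimes\otimes^r V)+(\otimes^{p+q}V\otimes I_r)$ and, symmetrically, $W_2^\perp=(I_p\otimes\otimes^{q+r}V)+(\otimes^{p}V\otimes I_{q+r})$.

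The heart of the argument is the recursion \eqref{altdefIk}. Iterating $I_k=I_{k-1}\otimes V+V\otimes I_{k-1}$ expresses $I_N$ as the sum, over adjacent positions $s=1,\dots,N-1$, of the subspaces $T_s:=(\otimes^{s-1}V)\otimes I_2\otimes(\otimes^{N-s-1}V)$ carrying the Cartan-complement factor $I_2$ in the slots $(s,s+1)$. Applying the same unrolling inside $W_1^\perp$ shows that $I_{p+q}\otimes\otimes^r V$ supplies precisely $T_1,\dots,T_{p+q-1}$ while $\otimes^{p+q}V\otimes I_r$ supplies precisely $T_{p+q+1},\dots,T_{N-1}$, so that $W_1^\perp=\sum_{s\neq p+q}T_s$; symmetrically $W_2^\perp=\sum_{s\neq p}T_s$. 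Because $q\geq1$ forces $p\neq p+q$, the one slot missing from $W_1^\perp$ is provided by $W_2^\perp$ and vice versa, whence $W_1^\perp+W_2^\perp=\sum_{s=1}^{N-1}T_s=I_N$, which is what was needed.

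I expect the main obstacle to be bookkeeping rather than anything conceptual: one must track carefully that the tensor-complement formula together with the unrolling of $I_k$ deposits the $I_2$-factors in exactly the claimed positions, and it is here that strict positivity of $p,q,r$ is decisive, since it is $q\geq1$ that makes the two omitted slots $p$ and $p+q$ distinct and hence jointly exhaustive. No subtlety about complete reducibility intervenes beyond what Theorem \ref{thmCartan} already guarantees for Lie algebras, so the proof requires none of the extra hypotheses needed in the super setting.
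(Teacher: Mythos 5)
Your proposal is correct, but it is genuinely different from the paper's own argument. The paper proves Corollary \ref{claimEast} on the primal side: assuming without loss of generality $r<p+q$ (and arguing symmetrically via $p<q+r$ otherwise), it considers $\cU(\mathfrak{n}^-)\cdot\bigl(v_+^{\otimes (p+q)}\otimes \circledcirc^r V\bigr)\cap\odot^{p+q+r}V$ and reruns the leading-term and symmetry analysis from the proof of Lemma \ref{Lemequiv2}, together with complete reducibility. You instead dualize: you convert the intersection $W_1\cap W_2$ into the sum $W_1^\perp+W_2^\perp$ via the non-degenerate contravariant form, unroll the recursion \eqref{altdefIk} into the slot decomposition $I_N=\sum_{s=1}^{N-1}T_s$ with $T_s=(\otimes^{s-1}V)\otimes I_2\otimes(\otimes^{N-s-1}V)$, and observe that $W_1^\perp$ misses exactly slot $p+q$ while $W_2^\perp$ misses exactly slot $p$, which are distinct precisely because $q\ge 1$. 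I checked your auxiliary facts and they all hold: $(A\otimes B)^\perp=A^\perp\otimes(\otimes^b V)+(\otimes^a V)\otimes B^\perp$ by the dimension count (the intersection of the two summands is $A^\perp\otimes B^\perp$), $(W_1\cap W_2)^\perp=W_1^\perp+W_2^\perp$ by biduality in finite dimension, $(\circledcirc^pV)\circledcirc(\circledcirc^qV)=\circledcirc^{p+q}V$ because the top weight space of $\circledcirc^pV\otimes\circledcirc^qV$ is spanned by $v_+^{\otimes(p+q)}$ and complete reducibility identifies the Cartan summand with $\cU(\mg)\cdot v_+^{\otimes(p+q)}$, and $\beta_k=\circledcirc^kV$ with $I_k^{\perp}=\beta_k$ from Theorem \ref{thmCartan}, whose extra hypothesis is indeed vacuous for semisimple Lie algebras. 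Your route buys uniformity --- no case split on the relative sizes of $p,q,r$ --- and makes fully transparent why strict positivity is the operative hypothesis; the paper's route buys economy, reusing Lemma \ref{Lemequiv2} essentially verbatim, and stays in the same technical register as the superalgebra arguments elsewhere in the paper. One cosmetic point: the paper defines $I_k$ only for $k\ge 2$, so when one of $p,q,r$ equals $1$ you should read $I_1=0$; your slot sums then degenerate correctly to empty sums, so nothing breaks, but it is worth saying explicitly.
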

\begin{proof}
If $r < p+q$ this can be proved by considering \[\cU(\mn^-)\cdot\left(v_+^{p+q}\otimes \circledcirc^r V\right)\cap\odot^{p+q+r}V\]
with the same techniques as in the proof of Lemma \ref{Lemequiv2} and using complete reducibility. If $r\ge p+q$ then $p<q+r$ and the proof can be done that way.
\end{proof}

In order to prove equation \eqref{herformCartan} or \eqref{formCartan} in the cases of our interest we need another lemma.
\begin{lemma}
\label{betaCasimir}
The representation $\beta_k$ is contained in the subrepresentation of $\otimes^kV$ consisting of those elements that are eigenvectors of the quadratic Casimir operator with the same eigenvalue as $v_+^{\otimes k}$.
\end{lemma}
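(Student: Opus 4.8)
The plan is to prove the slightly stronger statement that the quadratic Casimir operator $\Omega$ acts on \emph{all} of $\beta_k$ by a single scalar; once this is known, evaluating on $v_+^{\otimes k}$, which lies in $\beta_k$ by equation \eqref{Bsymm}, forces that scalar to be the eigenvalue of $\Omega$ on a highest weight vector of weight $k\lambda$, which is the assertion of the lemma. I would argue by induction on $k$. For the base case $k=2$, note that $\beta_2=\cU(\mg)\cdot v_+^{\otimes 2}$ is cyclic, generated by a single highest weight vector of weight $2\lambda$; since $\Omega$ is central it acts on $\cU(\mg)\cdot v_+^{\otimes 2}$ by the scalar it takes on the generator, which settles $k=2$.

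For the inductive step I would use the coproduct expansion of the Casimir on the $k$-fold tensor power. Writing $C_{ij}$ for the split Casimir, i.e. the image of the invariant two-tensor $\sum_\alpha u_\alpha\otimes v^\alpha$ dual to $\langle\cdot,\cdot\rangle$, acting in the $i$-th and $j$-th slots, the Casimir on $\otimes^kV$ takes the form
\begin{eqnarray*}
\Omega=k\,c_\lambda\,\mathrm{Id}+2\sum_{1\le i<j\le k}C_{ij},
\end{eqnarray*}
where $c_\lambda$ is the scalar by which $\Omega$ acts on the irreducible $V$. Thus it suffices to show that every $C_{ij}$ acts on $\beta_k$ as a scalar depending only on the separation $j-i$. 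The key structural input is that $\beta_k$ sits inside $\beta_j$ along every \emph{consecutive} block of length $j$ for $2\le j\le k-1$: this follows by a downward induction from the defining relation $\beta_{j+1}=\beta_j\otimes V\cap V\otimes\beta_j$, since an element of $\beta_{j+1}$ on a block of length $j+1$ restricts to $\beta_j$ on both its left and right sub-blocks of length $j$, and every consecutive block of length $j$ arises this way. By the induction hypothesis of the lemma, the Casimir $\Omega_j$ of such a block acts by $c_{j\lambda}$, so for each consecutive block the sum of the $C_{ij}$ over pairs inside it equals the universal scalar $\tfrac12(c_{j\lambda}-j\,c_\lambda)$.

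From these block identities I would then solve for the individual operators recursively: comparing blocks of consecutive lengths expresses $C_{ij}$ on $\beta_k$ as a fixed scalar $\nu_{j-i}$ for every pair of separation $j-i\le k-2$. The one pair this does \emph{not} reach is $\{1,k\}$, of maximal separation $k-1$, and this is exactly where the argument is delicate: one cannot enlarge the block without invoking $\Omega$ on all $k$ factors, which is what we are trying to compute. The remedy is supersymmetry. Since $\beta_k\subset\odot^k V$ by equation \eqref{Bsymm}, every graded transposition $P_\tau$ fixes $w\in\beta_k$, while the split Casimir is equivariant, $P_\tau C_{ij}P_\tau^{-1}=C_{\tau(i)\tau(j)}$; taking the involution $\tau=(k-1,\,k)$ and using $P_\tau w=w$ gives $C_{1k}w=P_\tau C_{1,k-1}P_\tau^{-1}w=\nu_{k-2}w$, so $C_{1k}$ too acts by a scalar. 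Consequently $\Omega$ acts on all of $\beta_k$ by one and the same scalar, and comparison with $v_+^{\otimes k}$ identifies it as the eigenvalue on $v_+^{\otimes k}$, completing the induction. I expect the main obstacle to be precisely this long-range pair $\{1,k\}$, together with the careful bookkeeping of Koszul signs needed to justify both the graded symmetry of $\sum_\alpha u_\alpha\otimes v^\alpha$, which is responsible for the factor $2$ in the coproduct formula, and the equivariance $P_\tau C_{ij}P_\tau^{-1}=C_{\tau(i)\tau(j)}$ in the super setting.
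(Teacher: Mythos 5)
Your proof is correct, and its two pillars --- the coproduct expansion of the Casimir into two-slot pieces $C_{ij}$, and the containment of $\beta_k$ in smaller $\beta_j$'s along consecutive blocks --- are exactly the ingredients of the paper's (two-line) proof; the difference is in execution. The paper skips your recursion over block lengths entirely: it observes that the restriction of an element of $\beta_k$ to \emph{any} two positions lies in $\beta_2$. For adjacent positions this is the iterated definition \eqref{defBk}, and for non-adjacent positions it is precisely the supersymmetry argument (namely $\beta_k\subset\odot^kV$ by \eqref{Bsymm}, graded transpositions fix $w$ and conjugate $C_{ij}$ to $C_{\tau(i)\tau(j)}$) that you reserve for the single extremal pair $\{1,k\}$. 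Applied to every non-adjacent pair, that same trick shows at once that each $C_{ij}$ acts on $\beta_k$ by the one scalar $\frac{1}{2}(c_{2\lambda}-2c_\lambda)$ coming from the two-slot Casimir acting on $\beta_2$, so no induction on $k$ beyond the base case and no solving of block identities is needed --- your constants $\nu_d$ are in fact all equal, which your recursion obscures. What your longer route buys is explicitness: the paper leaves both the block-containment statement and the Koszul-sign and equivariance bookkeeping implicit, and your version records honestly where each is used; it also correctly invokes the induction hypothesis to get the scalar $c_{j\lambda}$ on every consecutive block of length $j<k$, though, as just noted, only the case $j=2$ is ever required. The final identification of the common scalar via $v_+^{\otimes k}\in\beta_k$ is the same in both arguments.
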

\begin{proof}
By definition this holds for $\beta_2$. The statement for $k>2$ follows from the fact that the restriction to each two positions in the tensor product of $\otimes^k V$ of an element of $\beta_k$ is contained in $\beta_2$ and the fact that the Leibniz rule spreads the quadratic Casimir operator over at most two positions.
\end{proof}

\begin{theorem}
\label{Cartanosp}
Equation \eqref{herformCartan} holds for $\mg=\mathfrak{osp}(m|2n)$ and $V$ the adjoint representation with
\begin{itemize}
\item the positive root system corresponding to $\mathfrak{spo}(2n|m)$,
\item the positive root system corresponding to $\osp$ if $m-2n> 2$,
\end{itemize}
where the root systems are given in equations \eqref{simpleroots2} and \eqref{simpleroots}. In both cases the statement reduces to equation \eqref{formCartan}.
\end{theorem}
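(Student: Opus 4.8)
The plan is to derive both assertions from Theorem \ref{thmCartan}. That theorem reduces everything to a single point: if the subrepresentation $\beta_j$ of \eqref{defBk} admits a complementary subrepresentation inside $\otimes^j V$ for every $2\le j\le k$, then automatically $\beta_j=\cU(\mg)\cdot v_+^{\otimes j}=\circledcirc^j V$ is irreducible and the Cartan identity \eqref{formCartan} holds, whence \eqref{herformCartan} collapses to \eqref{formCartan}. So the entire proof consists in exhibiting, for each $j$, a complement of $\beta_j$.

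To build this complement I would use the quadratic Casimir operator $C$. Since $C$ is central it acts on $\otimes^j V$ as a $\mg$-module endomorphism, so the associated generalized-eigenspace (primary) decomposition $\otimes^j V=\bigoplus_c\widetilde E_c$ is a decomposition into subrepresentations; this is robust even when $\otimes^j V$ fails to be completely reducible, which is essential here. Let $c_j$ be the Casimir value of the weight $j\lambda$, where $\lambda$ is the highest weight of the adjoint representation ($\lambda=\epsilon_1+\epsilon_2$ for the $\osp$-system, $\lambda=2\delta_1$ for the $\mathfrak{spo}$-system). Lemma \ref{betaCasimir} places $\beta_j\subseteq\widetilde E_{c_j}$. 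If I can show $\widetilde E_{c_j}$ is the single irreducible $L_{j\lambda}$, then the chain $\cU(\mg)\cdot v_+^{\otimes j}\subseteq\beta_j\subseteq\widetilde E_{c_j}=L_{j\lambda}=\cU(\mg)\cdot v_+^{\otimes j}$ forces $\beta_j=\widetilde E_{c_j}$, and $\bigoplus_{c\ne c_j}\widetilde E_c$ is the required complement.

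Proving $\widetilde E_{c_j}=L_{j\lambda}$ rests on two facts. First, the $j\lambda$-weight space of $\otimes^j V$ is one-dimensional, spanned by $v_+^{\otimes j}$, so the Cartan constituent $L_{j\lambda}$ occurs with multiplicity one. Second, every composition factor $L_\mu$ of $\widetilde E_{c_j}$ satisfies $c(\mu)=c_j$ with $\mu$ dominant and $\mu\le j\lambda$ in the root order. It therefore suffices to establish the Casimir-separation statement: $c(\mu)\ne c_j$ for every highest weight $\mu< j\lambda$ of a composition factor of $\otimes^j V$. Writing $\nu=j\lambda-\mu$, a nonzero non-negative integral combination of positive roots, this amounts to
\[
c_j-c(\mu)=\langle\nu,\;j\lambda+\mu+2\rho\rangle\neq 0 ,
\]
and once this holds all composition factors of $\widetilde E_{c_j}$ are $L_{j\lambda}$, which together with multiplicity one yields $\widetilde E_{c_j}=L_{j\lambda}$.

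I expect the displayed inequality to be the main obstacle. Over $\osp$ the invariant form $\langle\cdot,\cdot\rangle$ on $\mathfrak{h}^\ast$ is indefinite, positive on the $\mathfrak{so}(m)$ ($\epsilon$) directions and negative on the $\mathfrak{sp}(2n)$ ($\delta$) directions, so the classical positive-definiteness argument that forces $c(\mu)<c_j$ is simply unavailable; this is precisely why the two root systems must be treated separately and why a hypothesis on $M=m-2n$ appears. For the standard $\mathfrak{spo}(2n|m)$ system, where $j\lambda=2j\delta_1$ lies in the symplectic direction, I would show the separation holds with no restriction on $M$; for the non-standard $\osp$ system, where $j\lambda=j(\epsilon_1+\epsilon_2)$, the condition $m-2n>2$ is exactly what keeps $\langle\nu,\;j\lambda+\mu+2\rho\rangle$ from vanishing, the integer $M$ entering through the pairing $\langle\lambda,\rho\rangle$. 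Carrying out this explicit sign estimate for the indefinite form, using the positive roots in \eqref{simpleroots2} and \eqref{simpleroots} together with the corresponding $\rho$, is the technical heart of the argument; the case $j=2$ is then a consistency check against Theorem \ref{decomposition} and the well-definedness of $\mg\topa\mg$ and $\mg\topb\mg$ recorded after Theorem \ref{cases012}.
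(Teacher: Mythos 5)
There is a genuine gap, and it is fatal for the first bullet. Your engine is the claim that the generalized Casimir eigenspace $\widetilde E_{c_j}\subset\otimes^j\mg$ containing $v_+^{\otimes j}$ consists of the single irreducible $L^{m|2n}_{j\lambda}$, which requires Casimir separation against \emph{every} composition factor of the full tensor power $\otimes^j\mg$ with weight below $j\lambda$. You defer this ``sign estimate'' as the technical heart, but for the $\mathfrak{spo}(2n|m)$ system it is simply false for certain $M=m-2n$, even though the theorem asserts that case with no restriction on $M$. Concretely, normalize $\langle\epsilon_i,\epsilon_j\rangle=\delta_{ij}$, $\langle\delta_i,\delta_l\rangle=-\delta_{il}$; then for the six constituents of $\mg\otimes\mg$ in Theorem \ref{decomposition} one computes $c\bigl(K^{m|2n}_{\epsilon_1+\epsilon_2+\epsilon_3+\epsilon_4}\bigr)=4M-16$ for the $\mathfrak{spo}$-Cartan piece $L^{m|2n}_{4\delta_1}$, while $c\bigl(K^{m|2n}_{2\epsilon_1}\bigr)=2M$, $c\bigl(K^{m|2n}_{\epsilon_1+\epsilon_2}\bigr)=2M-4$ and $c\bigl(K^{m|2n}_{0}\bigr)=0$. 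So already at $j=2$ the Casimir fails to separate $L^{m|2n}_{4\delta_1}$ from $L^{m|2n}_{\delta_1+\delta_2}$ when $M=8$, from $L^{m|2n}_{2\delta_1}$ when $M=6$, and from $L^{m|2n}_{0}$ when $M=4$ --- all values with $M>2$, so nothing in your setup excludes them. In those cases $\widetilde E_{c_j}$ is not a single irreducible and your construction of a complement for $\beta_j$ collapses; your assertion that separation holds ``with no restriction on $M$'' for the standard system cannot be repaired within this scheme.

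The paper avoids this entirely by not running the Casimir on the full tensor power. For the $\mathfrak{spo}$ case it observes that $\beta_j$ of \eqref{defBk} is cut out by explicit index symmetries (Remark \ref{qsymm}), hence is a $\mathfrak{gl}(m|2n)$ tensor (star) module; complete reducibility of tensor products of such modules yields the complement demanded by Theorem \ref{thmCartan} directly, with no Casimir and no condition on $M$. For the $\osp$ case the paper first restricts to the $\mathfrak{gl}(m|2n)$-module $\gamma_j$ (which again has a complement), inside which the only possible $\osp$-highest weights form the small two-parameter family $(j-p)\epsilon_1+(j-q)\epsilon_2$ with $p+q$ even, $p,q\le j$; the Casimir values on \emph{this} restricted family are pairwise distinct exactly when $m-2n>2$, which is where the hypothesis enters --- not through $\langle\lambda,\rho\rangle$ as you suggest, and not through a separation statement over all of $\otimes^j\mg$, which even in the $\osp$ case you never verify for $j>2$. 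So your overall strategy (reduce to Theorem \ref{thmCartan} via a Casimir-primary decomposition, using Lemma \ref{betaCasimir}) is the right family of ideas, but applying it to the whole tensor power both overreaches where it could work and provably fails where the theorem is unconditional; the missing ingredient is the preliminary $\mathfrak{gl}(m|2n)$ symmetry-type reduction that shrinks the set of competing weights.
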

\begin{proof}
First of all we note that since the adjoint representation of $\mg=\osp$ corresponds to the representation on $\mC^{m|2n}\wedge\mC^{m|2n}$ it naturally extends to a $\mathfrak{gl}(m|2n)$-representation. The same holds for the tensor powers $\otimes^j\mg$.

For the first case the representation $\beta_j$ corresponds to the space
\begin{eqnarray*}
\beta_j&=&\{T\in\odot^j\mg|2T^{abcd\cdots ef}=(-1)^{[a]([b]+[c])}T^{bcad\cdots ef}-(-1)^{[b][c]}T^{acbd\cdots ef}\},
\end{eqnarray*}
see Remark \ref{qsymm}. This is a $\mathfrak{gl}(m|2n)$-tensor module (star representation, see \cite{MR1773773, MR0424886}) and since tensor products of irreducible $\mathfrak{gl}(m|2n)$-tensor modules are completely reducible, it follows that $\beta_j$ has a complement representation as a $\mathfrak{gl}(m|2n)$-representation and hence as a representation of $\osp\subset\mathfrak{gl}(m|2n)$ too. The result then follows immediately from Theorem \ref{thmCartan}.

For the second case, the representation $\beta_j$ corresponds to the subrepresentation of traceless tensors inside the space
\begin{eqnarray*}
\gamma_j&=&\{T\in\odot^j\mg|T^{abcd\cdots ef}+(-1)^{[a]([b]+[c])}T^{bcad\cdots ef}-(-1)^{[b][c]}T^{acbd\cdots ef}=0\}.
\end{eqnarray*}
For the same reason as above $\gamma_j$ has a complement representation inside $\otimes^j \mg$. It follows easily from the concept of taking traces that all possible $\osp$-highest weight vectors inside $\gamma_j$ are of weight
\begin{eqnarray*}
(j-p)\epsilon_1+(j-q)\epsilon_2
\end{eqnarray*}
with $p+q$ even and $p,q\le j$. It can be checked easily, see \cite{MR0546778}, that all of these weights lead to a different eigenvalue of the quadratic Casimir operator exactly when $m-2n>2$. The representation $\gamma_j$ decomposes into generalized eigenspaces of the quadratic Casimir operator, mutually orthogonal with respect to the contravariant hermitian form. Since the one containing $v_+^{\otimes j}$ has only one highest weight vector and has a non-degenerate contravariant hermitian form, it follows straightforwardly that it is an irreducible highest weight representation. This has to be $\beta_j$ by Lemma \ref{betaCasimir}, so $\beta_j=\cU(\mg)\cdot v_+^{\otimes j}=\circledcirc^j \mg$. 
\end{proof}

\section{The case of $\mathfrak{osp}(m|2n)$}
\label{ospsec}

The main achievement of \cite{MR2369839} was the discovery of special 
tensors, responsible for the uniqueness of the Joseph ideal as a special 
ideal in the universal enveloping algebra of simple complex Lie algebras.
It is then explained in \cite{som} that this tensor is precisely 
responsible for the deformation theory and Poincar\'e–-Birkhoff–-Witt (PBW)
theorem for non-homogeneous quadratic algebras of Koszul type. We 
expect the special tensors constructed in the article will 
play an analogous role in the category of simple complex Lie superalgebras.
In this paper we use these tensors to prove a characterization of the
Joseph ideals, which is a natural analogue of the classical characterization in \cite{Garfinkle}. First derive these results for the positive root system corresponding to $\osp$, in this section.


\subsection{The Joseph-like ideal for $\mathfrak{osp}(m|2n)$}
\label{subsecosp}
In the following lemma we prove the existence of a specific tensor, which exists for simple Lie algebras by the results in \cite{MR1631302}. We also obtain the explicit form of this tensor, similar to the corresponding result for $\mathfrak{so}(m)$, $\mathfrak{sp}(2n)$ and $\mathfrak{sl}(n)$ in \cite{MR2369839}.
\begin{lemma}
\label{dimhom1}
For $\mathfrak{g}=\osp$, let $\Phi$ denote the composition of $\mg$-module morphisms
\begin{eqnarray*}
\mathfrak{g}\wedge \mathfrak{g}\otimes\mathfrak{g}\hookrightarrow \mathfrak{g}\otimes \mg\otimes \mg \rightarrow \mg\otimes \mg\topa\mg,
\end{eqnarray*}
for $M=m-2n\not\in\{ 1,2\}$. Then
\begin{eqnarray*}
\dim \Hom_\mg(\mg, \ker \Phi)&\ge&1.
\end{eqnarray*}
\end{lemma}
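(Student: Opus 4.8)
The plan is to exhibit one explicit nonzero element of $\Hom_\mg(\mg,\ker\Phi)$ directly, rather than to argue by a multiplicity count. The reason is that the hypothesis only excludes $M\in\{1,2\}$, so the value $M=0$ is allowed, and there $\mg\otimes\mg$ is \emph{not} completely reducible (Theorem \ref{cases012}), although $\mg\topa\mg\cong K^{m|2n}_{2\epsilon_1+2\epsilon_2}$, and hence $\Phi$ itself, remains well defined by the remark following Theorem \ref{cases012}. A direct construction sidesteps the delicate dimension bookkeeping at $M=0$. The first observation I would record is that, by \eqref{Liebracket}, the structure constants of $\osp$ are themselves contractions of the metric $g$; consequently the adjoint module $\mg\cong\mC^{m|2n}\wedge\mC^{m|2n}$ and every map below can be written purely in terms of $g$ and the Koszul signs $(-1)^{[a][b]}$, exactly as in the classical $\mathfrak{so}(m)$ construction of \cite{MR2369839} with super-signs inserted.

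Next I would build candidate morphisms $\mg\to(\mg\wedge\mg)\otimes\mg$. Using the nondegenerate invariant form of Lemma \ref{Killing} to dualise the bracket, one obtains the co-bracket $\delta\colon\mg\to\mg\wedge\mg$, an injective $\mg$-morphism onto the copy $K^{m|2n}_{\epsilon_1+\epsilon_2}\subset\mg\wedge\mg$ from Theorem \ref{decomposition}. Applying $\delta$ and then applying it again in the first tensor slot gives the injective $\mg$-morphism $T_0=(\delta\otimes\mathrm{id})\circ\delta$, landing in $(\mg\wedge\mg)\otimes\mg$; concretely $T_0(X)^{abcdef}$ is a fixed super-antisymmetric combination of terms of the shape $g^{\cdot\cdot}g^{\cdot\cdot}X^{\cdot\cdot}$, which I would write out in index notation. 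Alongside $T_0$ I would allow the further $\mg$-morphisms obtained by inserting a metric trace, so that the actual candidate $T$ is a linear combination of these building blocks with coefficients to be fixed; injectivity of $\delta$ guarantees $T\neq 0$ once the coefficients are not all trivial, which I would confirm on the highest weight vector $X_{\epsilon_1+\epsilon_2}$.

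It then remains to impose $\Phi\circ T=0$, which is the heart of the argument. Here I would invoke the explicit description of the target Cartan product from Remark \ref{qsymm}: $\mg\topa\mg=K^{m|2n}_{2\epsilon_1+2\epsilon_2}$ consists of the traceless tensors in $\mg\odot\mg$ annihilated by the symmetriser $q$ of \eqref{FormCartan2}. Thus $\Phi\circ T=0$ means that, after regrouping the last two $\mg$-slots of $T(X)$, their image under $\mathrm{id}\otimes\Pi_1$ vanishes. I would verify this by substituting the explicit metric expression for each building block into $\mathrm{id}\otimes\Pi_1$ and simplifying, using only the super-symmetry $g_{ba}=(-1)^{[a][b]}g_{ab}$, the trace value $\sum_a g_a{}^a=M$ from \eqref{traces}, and the Jacobi identity repackaged as a quadratic relation among the metric contractions. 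The exceptional role of $M$ enters precisely at this step: the trace terms that must cancel carry factors such as $M-2$ and $M-1$, mirroring \eqref{pphi}--\eqref{tracepphi}, so the linear system fixing the coefficients of $T$ is solvable exactly when $M\notin\{1,2\}$.

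The main obstacle is this final cancellation. The difficulty is bookkeeping rather than conceptual: one must track the Koszul signs through every contraction and confirm that the classical $\mathfrak{so}(m)$ identity placing a copy of $\mg$ in $\ker\Phi$ survives super-symmetrisation. Since each ingredient---bilinearity, the super-symmetry of $g$, the super-antisymmetry of the structure constants, and the Jacobi identity---has a literal super-analogue, I expect the classical computation to go through verbatim up to signs, producing a nonzero $T\in\Hom_\mg(\mg,\ker\Phi)$ and hence $\dim\Hom_\mg(\mg,\ker\Phi)\ge1$.
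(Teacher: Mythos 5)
Your plan is essentially the paper's own proof: the paper also proceeds by exhibiting an explicit nonzero element of $\Hom_\mg(\mg,\ker\Phi)$, built purely from the metric $g$ and Koszul signs, and it also uses Remark \ref{qsymm} (the explicit form \eqref{FormCartan2} of the complement of $\mg\topa\mg$ inside the traceless part of $\mg\odot\mg$) to verify that the $cdef$-projection onto the Cartan product vanishes. Your building blocks even coincide with the paper's up to normalization: your co-bracket $\delta$ is, in index form, the restriction to the adjoint copy $\mC^{m|2n}\wedge\mC^{m|2n}$ of the embedding $\phi$ of Lemma \ref{embeddingCC}, and your ``trace-insertion'' blocks correspond to tensoring $T^{ab}$ with the total trace part \eqref{tracepart}. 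The paper takes the specific combination $U^{abcdef}$ of these two blocks, antisymmetrizes in $ab\leftrightarrow cd$ to get $S\in\mg\wedge\mg\otimes\mg$, and then checks by direct sign bookkeeping that the symmetrized-in-$cd\leftrightarrow ef$ part of the swapped term lies in $K^{m|2n}_{\epsilon_1+\epsilon_2+\epsilon_3+\epsilon_4}$, hence carries no $\topa$-piece. So the approach is sound and would succeed.

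Two caveats, one of substance. First, the step you defer (``I expect the classical computation to go through verbatim up to signs'') is the entire mathematical content of the lemma; the paper's proof consists almost wholly of carrying out exactly this verification, so as written your text is a viable plan rather than a proof. Second, and more importantly, you misattribute the role of the hypothesis $M\not\in\{1,2\}$. In the paper's construction the coefficients of $S$ are universal -- no factors $M-1$ or $M-2$ appear anywhere in establishing $S\in\ker\Phi$, and the cancellation works uniformly in $M$, including $M=0$. The hypothesis is needed only so that $K^{m|2n}_{2\epsilon_1+2\epsilon_2}$ is a direct summand of $\mg\otimes\mg$ (Theorem \ref{cases012} and the remark following it), i.e.\ so that the projection onto $\mg\topa\mg$, and hence $\Phi$ itself, is defined; for $M\in\{1,2\}$ the question is not even well-posed. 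The factors $M-2$ and $M-1$ that you cite from \eqref{pphi}--\eqref{tracepphi} surface only later, in the reduction of $S$ inside $\otimes\mg/\cJ^1_\lambda$ in Theorem \ref{quadJ1}, where they produce the critical value $\lambda=-\frac{M-4}{4(M-1)}$. So if your linear system for the coefficients of $T$ turned out to be solvable ``exactly when $M\not\in\{1,2\}$,'' that would indicate an error in your setup rather than the true mechanism behind the hypothesis; the correct expectation is an $M$-independent solution.
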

\begin{proof}
We have to construct a tensor $S^{abcdef}$ in $\mathfrak{g}\otimes\mg\otimes \mg$ starting from a tensor  $T\in\mathfrak{g}$ such that the $abcd$ part is in $\mathfrak{g}\wedge\mathfrak{g}$ while the $cdef$ part contains no $\mg\topa\mg$-piece.

We will use the total trace part \eqref{tracepart} in $cdef$
\begin{eqnarray*}
-2g^{de}g^{cf}T^{ab}+2(-1)^{[c][d]}g^{ce}g^{df}T^{ab}
\end{eqnarray*}
which clearly can be regarded as an element of $\mathfrak{g}\otimes\mg\otimes \mg$. By defining, for each $a,b$, $F^{cf}=g^{bc}T^{af}-(-1)^{[a][b]}g^{ac}T^{bf}$ as an element of $\mC^{m|2n}\otimes \mC^{m|2n}$ we can consider the embedding into $\mg\otimes\mg$ of Lemma \ref{embeddingCC},
\begin{eqnarray*}
g^{de}F^{cf}-(-1)^{[d][e]}g^{df}F^{ce}-(-1)^{[c][d]}g^{ce}F^{df}+(-1)^{[c][d]+[e][f]}g^{cf}F^{de},
\end{eqnarray*}
which is in $\mathfrak{g}\otimes\mg\otimes \mg$ as well. By construction, the sum of these tensors
\begin{eqnarray*}
& &-2g^{de}g^{cf}T^{ab}+2(-1)^{[c][d]}g^{ce}g^{df}T^{ab}+g^{de}g^{bc}T^{af}-(-1)^{[a][b]}g^{de}g^{ac}T^{bf}-(-1)^{[d][e]}g^{df}g^{bc}T^{ae}\\
&&+(-1)^{[a][b]+[d][e]}g^{df}g^{ac}T^{be}-(-1)^{[c][d]}g^{ce}g^{bd}T^{af}+(-1)^{[a][b]+[c][d]}g^{ce}g^{ad}T^{bf}\\
&&+(-1)^{[c][d]+[e][f]}g^{cf}g^{bd}T^{ae}-(-1)^{[c][d]+[e][f]+[a][b]}g^{cf}g^{ad}T^{be},
\end{eqnarray*}
which we denote by $U^{abcdef}$, has no Cartan product part in $cdef$. Then we define 
\begin{eqnarray*}
S^{abcdef}&=&U^{abcdef}-(-1)^{([a]+[b])([c]+[d])}U^{cdabef}
\end{eqnarray*}
which by definition is an element of $\mathfrak{g}\wedge\mathfrak{g}\otimes\mathfrak{g}$. It remains to be checked whether the second term of $S$ has no Cartan piece in $cdef$. We therefore write it out as $-(-1)^{([a]+[b])([c]+[d])}U^{cdabef}=$
\begin{eqnarray*}
&&2(-1)^{([a]+[b])([c]+[d])}g^{be}g^{af}T^{cd}-2(-1)^{([a]+[b])([c]+[d])+[a][b]}g^{ae}g^{bf}T^{cd}-(-1)^{([a]+[b])[c]+[a][b]}g^{be}g^{ad}T^{cf}\\
&&+(-1)^{[b]([a]+[d])}g^{be}g^{ac}T^{df}+(-1)^{[a]([c]+[b])+[b]([c]+[e])}g^{bf}g^{ad}T^{ce}-(-1)^{([b]([a]+[d]+[e])}g^{bf}g^{ac}T^{de}\\
&&+(-1)^{[c]([a]+[b])}g^{ae}g^{bd}T^{cf}-(-1)^{[a][d]}g^{bc}g^{ae}T^{df}-(-1)^{[a][e]+([a]+[b])[c]}g^{bd}g^{af}T^{ce}\\
&&+(-1)^{[a]([d]+[e])}g^{bc}g^{af}T^{de}.
\end{eqnarray*}
We take the projection onto $\mathfrak{g}\otimes\mg\odot\mg$. Since everything except the first two terms is already super symmetric in $cd\leftrightarrow ef$, this is given by
\begin{eqnarray*}
&&(-1)^{([a]+[b])([c]+[d])}g^{be}g^{af}T^{cd}+g^{bc}g^{ad}T^{ef}-(-1)^{([a]+[b])([c]+[d])+[a][b]}g^{ae}g^{bf}T^{cd}\\
&&-(-1)^{([a][b]}g^{ac}g^{bd}T^{ef}-(-1)^{([a]+[b])[c]+[a][b]}g^{be}g^{ad}T^{cf}+(-1)^{[b]([a]+[d])}g^{be}g^{ac}T^{df}\\
&&+(-1)^{[a]([c]+[b])+[b]([c]+[e])}g^{bf}g^{ad}T^{ce}-(-1)^{([b]([a]+[d]+[e])}g^{bf}g^{ac}T^{de}+(-1)^{[c]([a]+[b])}g^{ae}g^{bd}T^{cf}\\
&&-(-1)^{[a][d]}g^{bc}g^{ae}T^{df}-(-1)^{[a][e]+([a]+[b])[c]}g^{bd}g^{af}T^{ce}+(-1)^{[a]([d]+[e])}g^{bc}g^{af}T^{de}.
\end{eqnarray*}
It can be checked in a straightforward manner that this is of the form in equation \eqref{FormCartan2}, which is $\cA^{im}=K^{m|2n}_{\epsilon_1+\epsilon_2+\epsilon_3+\epsilon_4}$, see Remark \ref{qsymm}. Therefore $S\in \ker(\Phi)$ for each $T\in\mg$. It remains to be proven that $T\to S$ is a $\mg$-module morphism. Since $U\to S$ is a $\mg$-module morphism we just need to prove that $T\to U$ corresponds to one. The first part of $U$ is trivial, the second one follows immediately from the fact that $T^{ab}\to g^{bc}T^{ad}$, $\mg\to \otimes^4\mC^{m|2n}$ is a $\mg$-module morphism because $T\to U$ is the composition of this with super anti-symmetrization.
\end{proof}

In the following theorem, this tensor will be used to prove that except for one special value of the parameter, the codimension inside $\otimes\mg$  of a 1-parameter family of ideals is finite.

\begin{theorem}
\label{quadJ1}
Consider $\mathfrak{g}=\mathfrak{osp}(m|2n)$ with $M=m-2n$ for $M\not\in\{ 1,2\}$. If $\lambda\not=-\frac{M-4}{4(M-1)}$, the two-sided ideal $\cJ^1_\lambda$ in $\otimes\mathfrak{g}=\oplus_{k=0}^\infty \otimes^k\mg$ generated by
\begin{eqnarray}
\label{quadrel1}
X\otimes Y-X\topa Y-\frac{1}{2}[X,Y]-\lambda\langle X,Y \rangle&\in&\mg\otimes\mg\,\oplus\,\mg\,\oplus\,\mC
\end{eqnarray}
for every $X,Y\in\mg$, contains $\mathfrak{g}$.
\end{theorem}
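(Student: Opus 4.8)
The strategy is to use the special tensor $S$ from Lemma \ref{dimhom1} as an obstruction which, reduced modulo $\cJ^1_\lambda$ along its two distinguished pairs of factors, forces a nonzero multiple of each $T\in\mg$ into the ideal. First I would record how \eqref{quadrel1} acts as a rewriting rule: modulo $\cJ^1_\lambda$ every $W=X\otimes Y\in\mg\otimes\mg$ is congruent to $X\topa Y+\tfrac12[X,Y]+\lambda\langle X,Y\rangle$, so, reading off Theorem \ref{decomposition}, the Cartan component $K^{m|2n}_{2\epsilon_1+2\epsilon_2}$ is kept in degree $2$, the adjoint component $K^{m|2n}_{\epsilon_1+\epsilon_2}$ is traded for the bracket in degree $1$, the trivial component $K^{m|2n}_0$ for $\lambda\langle\cdot,\cdot\rangle$ in degree $0$, and the three components $K^{m|2n}_{2\epsilon_1+\epsilon_2+\epsilon_3}$, $K^{m|2n}_{2\epsilon_1}$, $K^{m|2n}_{\epsilon_1+\epsilon_2+\epsilon_3+\epsilon_4}$ are annihilated.

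Fix $T\in\mg$ and put $S=S(T)\in(\mg\wedge\mg\otimes\mg)\cap\ker\Phi$. Applying the above rule in the first two tensor factors of $S$, and separately in the last two (both legitimate since $\cJ^1_\lambda$ is two-sided), yields two elements of $\cJ^1_\lambda$, hence by subtraction the membership
\begin{eqnarray*}
\tfrac12[\cdot]^{12}S-\tfrac12[\cdot]^{23}S-\lambda\langle\cdot\rangle^{23}S\,\in\,\cJ^1_\lambda,
\end{eqnarray*}
where $[\cdot]^{ij}$ and $\langle\cdot\rangle^{ij}$ denote bracketing resp. contracting the $i$-th and $j$-th factors. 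Here the two symmetry properties of $S$ do the essential work: since $S$ is super skew in its first two factors, both the Cartan part and the form part in those factors vanish, and since $S\in\ker\Phi$ the Cartan part in the last two factors vanishes as well. The two surviving bracket terms live in $\mg\otimes\mg$ and the form term in $\mg$.

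It remains to reduce the two degree-$2$ bracket terms once more by the same rule; their non-Cartan parts descend to degree $\le 1$ while their Cartan parts stay in $\mg\topa\mg$. The resulting element $E(T)$ of $\cJ^1_\lambda$ therefore lies in $(\mg\topa\mg)\oplus\mg\oplus\mC$ and depends $\mg$-equivariantly on $T$, because $T\mapsto S(T)$ and all the reduction maps are $\mg$-morphisms. Since $\Hom_\mg(\mg,\mg\topa\mg)=0=\Hom_\mg(\mg,\mC)$ while $\Hom_\mg(\mg,\mg)=\mC$, Schur's lemma collapses $E(T)$ onto its degree-$1$ isotypic part, giving $E(T)=c(\lambda)\,T$ for a scalar $c(\lambda)$; in particular the degree-$2$ Cartan and degree-$0$ scalar contributions cancel automatically, so no separate cancellation needs to be checked by hand. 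Thus $c(\lambda)\,T\in\cJ^1_\lambda$ for every $T\in\mg$.

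The decisive point is then the evaluation of $c(\lambda)$. Using the explicit form of $S$ together with the trace identities \eqref{pphi} and \eqref{tracepphi} and the total-trace tensor \eqref{tracepart}, one finds that $c(\lambda)$ is a nonzero multiple of $4(M-1)\lambda+(M-4)$, whose unique root is $\lambda=-\tfrac{M-4}{4(M-1)}$ (this is exactly where the factors $\tfrac{M-2}{4}$ and $\tfrac{M-1}{2}$ coming from \eqref{pphi}--\eqref{tracepphi} conspire). Hence for $\lambda\neq-\tfrac{M-4}{4(M-1)}$ we have $c(\lambda)\neq0$, so $T\in\cJ^1_\lambda$ for all $T$, i.e. $\mg\subset\cJ^1_\lambda$. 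I expect the main obstacle to be precisely this last computation: carrying all the Koszul signs $(-1)^{[a][b]}$ through the iterated bracket and trace contractions and confirming that the surviving degree-$1$ coefficient is exactly $c(\lambda)$. This bookkeeping is what makes the superalgebra case more delicate than the classical $\mathfrak{so}(m)$ and $\mathfrak{sp}(2n)$ computations of \cite{MR2369839}, and it is where the abstract-index notation is indispensable.
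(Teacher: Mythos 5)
Your proposal follows essentially the same route as the paper's proof: both hinge on the special tensor $S$ of Lemma \ref{dimhom1}, reduce it modulo $\cJ^1_\lambda$ twice (once through the first pair of factors, using that $S$ is super skew there so the Cartan and form parts drop out, and once through the last pair, using $S\in\ker\Phi$), and identify the critical parameter as the unique root of an affine function of $\lambda$. The one genuine difference is organizational. Where you invoke equivariance of $T\mapsto E(T)$ and Schur's lemma (via $\Hom_\mg(\mg,\mg\topa\mg)=0=\Hom_\mg(\mg,\mC)$ and $\Hom_\mg(\mg,\mg)=\mC$) to collapse everything onto the degree-one component, the paper instead verifies explicitly that after the first reduction each degree-two remainder is the Lemma \ref{embeddingCC} embedding of $\mg\cong\mC^{m|2n}\wedge\mC^{m|2n}$ into $\mg\wedge\mg$, hence has no Cartan or trace part and reduces purely through the bracket to a trace contraction. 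Your Schur shortcut is valid and spares that cancellation bookkeeping, but note what it cannot deliver: the theorem rests entirely on the scalar $c(\lambda)$, which you assert, correctly but without computation, to be a nonzero multiple of $4(M-1)\lambda+(M-4)$. The paper's explicit double reduction supplies exactly this: the $cdef$-side reduction gives $S^{abcdef}\cong(M-2)\bigl(\tfrac{M-4}{2}-2\lambda(M-1)\bigr)T^{ab}$, the $abcd$-side gives $S^{abcdef}\cong(M-4)(M-2)T^{ab}$, and subtracting yields $(M-2)\bigl(\tfrac{M-4}{2}+2\lambda(M-1)\bigr)T^{ab}\cong0$, where the prefactor $M-2$ is nonzero precisely by the hypothesis $M\neq2$ --- confirming the factor structure you guessed from \eqref{pphi} and \eqref{tracepphi}. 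So your outline is correct and matches the paper, modulo this one deferred (sign-heavy but routine) tensor computation, which you yourself flag as the main obstacle and which the paper carries out in full.
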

\begin{proof}
We consider the tensor $S$ from the proof of Lemma \ref{dimhom1} and simplify it inside $\otimes\mg/\cJ^1_\lambda$ in two 
different ways. Since $S^{abcdef}$ has no $\topa$-part in $cdef$, the equality
\begin{eqnarray*}
S^{abcdef}&\cong& \frac{1}{2}\left(S^{abc}{}_d{}^{df}-(-1)^{[c][f]}S^{abf}{}_d{}^{dc}\right)+\lambda S^{ab}{}_{cd}{}^{dc}
\end{eqnarray*}
holds inside $\otimes \mg/\cJ_\lambda^1$. A direct calculation shows
\begin{eqnarray*}
S^{abc}{}_d{}^{df}&=&(4-2M)g^{cf}T^{ab}+(3-M)(-1)^{[a][b]}g^{ac}T^{bf}+(M-3)g^{bc}T^{af}\\
&+&(-1)^{[b][c]}g^{bf}T^{ac}-(-1)^{[a]([b]+[c])}g^{af}T^{bc},
\end{eqnarray*}
which yields the equality
\begin{eqnarray*}
&&S^{abcdef}\cong \frac{M-4}{2}(g^{bc}T^{af}-(-1)^{[a][b]}g^{ac}T^{bf}-(-1)^{[c][f]}g^{bf}T^{ac}\\
&&+(-1)^{[a][b]+[c][f]}g^{af}T^{bc})-2\lambda(M-2)(M-1)T^{ab}.
\end{eqnarray*}
The first term on the right hand side is the embedding of $\mg\cong\mC^{m|2n}\wedge\mC^{m|2n}$ into $\mg\wedge\mg$ of Lemma \ref{embeddingCC}. Inside $\otimes \mg/\cJ_\lambda^1$ such a tensor $Y^{abcf}$ is therefore equivalent to $Y^a{}_b{}^{bf}$. Substituting this finally yields
\begin{eqnarray*}
S^{abcdef}&\cong&(M-2)\left(\frac{M-4}{2}-2\lambda(M-1)\right)T^{ab}.
\end{eqnarray*}

We can also reduce the tensor $S$ by using the defining quadratic relation of $\cJ_\lambda^1$ on $abcd$. The tensor $S$ clearly contains no Cartan part and no $K_0^{m|2n}$-part in $abcd$ since $S\in\mg\wedge\mg\otimes\mg$. Therefore we calculate $S^a{}_{b}{}^{bdef}=$
\begin{eqnarray*}
(M-4)\left(g^{de}T^{af}-(-1)^{[a][d]}g^{ae}T^{df}-(-1)^{[e][f]}g^{df}T^{ae}+(-1)^{[a][d]+[e][f]}g^{af}T^{de}\right).
\end{eqnarray*}
This tensor again corresponds to the embedding $\mg\hookrightarrow \mg\wedge\mg$ of Lemma \ref{embeddingCC}. This immediately implies $S{}_{ab}{}^{baef}=0$ and
\begin{eqnarray*}
S^{abcdef}&\cong&(M-4)(M-2)T^{ab}.
\end{eqnarray*}
Combining the two reductions of $S$ shows that
\begin{eqnarray*}
(M-2)\left(\frac{M-4}{2}+2\lambda(M-1)\right)T^{ab}&\cong&0
\end{eqnarray*}
inside $\otimes \mg /\cJ_\lambda^1$ or $\mg\subset \cJ^1_\lambda$ if $\lambda\not=-\frac{M-4}{4(M-1)}$.
\end{proof}

If $\mg\subset \cJ^1_\lambda$, then $\oplus_{k>0}\otimes^k\mg\subset\cJ_1^\lambda$, which implies that either $\cJ^1_\lambda=\otimes\mg$ or $\cJ^1_\lambda=\oplus_{k>0}\otimes^k\mg$. From equation \eqref{quadrel1} it is clear that when $\oplus_{k>0}\otimes^k\mg\subset\cJ_1^\lambda$, $\mC\subset \cJ^1_\lambda$ if and only if $\lambda\not=0$.

The ideal $\cJ^1_\lambda$ in $\otimes\mg$ can be associated to an ideal $\mathfrak{J}^1_\lambda$ in the universal enveloping algebra $\cU(\mg)$. Notice that the relation \eqref{quadrel1} may be split into the super skew and symmetric part
\begin{eqnarray*}
&&X\otimes Y -(-1)^{|X||Y|}Y\otimes X -[X,Y]\mbox{ and}\\
&&X\otimes Y+ (-1)^{|X||Y|}Y\otimes X -2X\topa Y-2\lambda\langle X,Y\rangle.
\end{eqnarray*}
The algebra $A_\lambda=\otimes \mg/\cJ^1_\lambda$ can therefore be realized in two steps. First we take the quotient with respect to super anti-symmetric part, which yields $\cU(\mg)$. The corresponding image ideal in $\cU(\mg)$ defined by the second relation is then $\mathfrak{J}^1_\lambda$, and $A_\lambda\cong\cU(\mg)/\mathfrak{J}^1_\lambda$.

\begin{theorem}
\label{1cases}
For $\mathfrak{g}=\osp$ with $M=m-2n$ different from $1,2$, the ideal $\mathfrak{J}^1_\lambda$
\begin{itemize}
\item is equal to $\cU(\mathfrak{g})$ if $\lambda\not=-\frac{M-4}{4(M-1)}$ and $\lambda\not=0$
\item is equal to $\cU_+(\mg)=\cU(\mg)\mg$ if $\lambda=0$ and $M\not=4$
\item has an infinite codimension in $\cU(\mathfrak{g})$ if $\lambda=-\frac{M-4}{4(M-1)}$.
\end{itemize}
\end{theorem}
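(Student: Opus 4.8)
The plan is to translate the statement into the size of the filtered algebra $A_\lambda=\otimes\mg/\cJ^1_\lambda\cong\cU(\mg)/\mathfrak{J}^1_\lambda$ (the isomorphism established just before the theorem), so that the codimension of $\mathfrak{J}^1_\lambda$ in $\cU(\mg)$ equals $\dim A_\lambda$. With this dictionary the three bullets become the assertions that $A_\lambda$ is zero, one-dimensional, or infinite-dimensional respectively.

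First I would dispose of the two generic regimes using Theorem~\ref{quadJ1} together with the discussion following it. For $\lambda\neq-\frac{M-4}{4(M-1)}$ that theorem gives $\mg\subset\cJ^1_\lambda$, and since $\cJ^1_\lambda$ is a two-sided ideal this forces $\bigoplus_{k>0}\otimes^k\mg\subset\cJ^1_\lambda$; thus the only part of $A_\lambda$ that can survive is the scalar summand $\mC$. As noted in the discussion following Theorem~\ref{quadJ1}, relation~\eqref{quadrel1} shows that $\mC\subset\cJ^1_\lambda$ precisely when the term $\lambda\langle X,Y\rangle$ can be made nonzero, i.e. when $\lambda\neq0$. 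Hence if $\lambda\neq0$ and $\lambda\neq-\frac{M-4}{4(M-1)}$ then $\cJ^1_\lambda=\otimes\mg$, so $A_\lambda=0$ and $\mathfrak{J}^1_\lambda=\cU(\mg)$; while if $\lambda=0$ and $M\neq4$ (so that $0$ is not the special value) then $\cJ^1_\lambda=\bigoplus_{k>0}\otimes^k\mg$, giving $A_\lambda\cong\mC$ and $\mathfrak{J}^1_\lambda=\cU_+(\mg)$. This settles the first two bullets.

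The substantive point is the third bullet, infinite codimension at the special value $\lambda_0=-\frac{M-4}{4(M-1)}$. Here the two contractions of the tensor $S$ carried out in the proof of Theorem~\ref{quadJ1} agree identically, so that argument produces no relation collapsing $\mg$ into lower degree, and I would instead establish infinite-dimensionality of $A_{\lambda_0}$ directly. The degree-two part of relation~\eqref{quadrel1} replaces $X\otimes Y$ by its Cartan projection $X\topa Y$ modulo lower-order terms, so $A_{\lambda_0}$ is spanned by the images of the Cartan powers $\topa^k\mg$, and the task is to show these images do not eventually collapse. The cleanest rigorous route is to exhibit an explicit infinite-dimensional $\mg$-module annihilated by $\mathfrak{J}^1_{\lambda_0}$: the space $\cH=\bigoplus_k\cH_k$ of superharmonics on $\mR^{m-2|2n}$ with its conformal $\osp$-action (the subject of Subsection~\ref{secreposp}, the pieces $\cH_k$ being those of Theorem~\ref{sphHarm}). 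Checking that each generator $X\otimes Y-X\topa Y-\tfrac12[X,Y]-\lambda_0\langle X,Y\rangle$ acts as zero on $\cH$ for exactly this value of $\lambda_0$ yields $\mathfrak{J}^1_{\lambda_0}\subseteq\mathrm{Ann}_{\cU(\mg)}(\cH)$, hence a surjection $A_{\lambda_0}\twoheadrightarrow\cU(\mg)/\mathrm{Ann}(\cH)$; since $\cH$ is infinite-dimensional and the weights occurring in it are unbounded, the image, and therefore $A_{\lambda_0}$, is infinite-dimensional.

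The main obstacle is precisely this last step. The subtlety is that verifying the quadratic generator acts by zero is what pins down the numerical coefficient $\lambda_0$, and this computation should be carried out intrinsically, in terms of the Cartan projection $\topa$, the bracket, and the renormalized form $\langle\cdot,\cdot\rangle$ of Lemma~\ref{Killing}, rather than by invoking the later identification of $\cH$ with the minimal representation, on pain of circularity. A more algebraic alternative would bypass the module entirely by proving that the Cartan powers $\topa^k\mg$ survive in $A_{\lambda_0}$, i.e. a Poincar\'e-Birkhoff-Witt/flatness property for this nonhomogeneous quadratic superalgebra, with vanishing of the obstruction at $\lambda_0$ guaranteed by the special tensor of Lemma~\ref{dimhom1} and the associativity of the Cartan product from Corollary~\ref{claimEast}; but establishing such flatness in the super setting is exactly the PBW analysis this paper elects not to pursue, so I would favour the module-theoretic argument.
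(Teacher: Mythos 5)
Your proposal is correct and follows essentially the same route as the paper: the first two bullets are exactly the paper's discussion following Theorem \ref{quadJ1}, and for the third bullet the paper likewise deduces infinite codimension from the inclusion of $\mathfrak{J}^1$ in the annihilator of the superharmonic representation (Theorem \ref{idealrep}), with the intrinsic verification you flag as the crux --- that the quadratic generators annihilate $\cH$ precisely at $\lambda_0=-\frac{M-4}{4(M-1)}$ --- being exactly the computation carried out summand-by-summand in Lemma \ref{DUDV} and Corollary \ref{cDcD}. Your concluding infinite-dimensionality step is also sound as stated, since a single Cartan element (e.g.\ the Euler-type operator $2\mE+M-4$) acts on the $\cH_k$ with infinitely many distinct eigenvalues, so its powers remain linearly independent in $\cU(\mg)/\mathrm{Ann}(\cH)$.
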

\begin{proof}
The first two properties follow from the considerations above. The third statement follows from the relation of the ideal $\mathfrak{J}^1_\lambda$ with the annihilator ideal of an infinite dimensional representation of $\mg$ in Theorem \ref{idealrep}.
\end{proof}

If $\lambda$ reaches the critical value for which $\cJ_\lambda^1$ (or $\mathfrak{J}_\lambda^1$) has infinite codimension we denote the ideals as $\cJ^1$ (or $\mathfrak{J}^1$) and call $\mathfrak{J}^1$ the Joseph ideal of $\osp$.

\begin{remark}
\label{junction1}
{\rm One can take the junction $\mathfrak{J}^1\cap\cU(\mathfrak{so}(m))$ using the embedding $\mathfrak{so}(m)\hookrightarrow\mathfrak{osp}(m|2n)$. However it can be checked that this ideal is not of the form of the corresponding classical Joseph ideal for $\mathfrak{so}(m)$ in \cite{MR1108044, MR1631302, MR2369839}. This is closely related to Remark \ref{Cartandiff} and the subsequent Remark \ref{remarkrealso}.}
\end{remark}

In \cite{Garfinkle} it was proven that if for an ideal $J\subset\cU(\mathfrak{so}(m))$ with infinite codimension, the corresponding graded ideal $gr(J)$ in $S(\mathfrak{so}(m))= \odot\mathfrak{so}(m)$ satisfies the property $gr(J)\cap S_2(\mathfrak{so}(m))=E$, with $E$ the sum of all irreducible representations inside $\mathfrak{so}(m)\odot\mathfrak{so}(m)$ except the Cartan product, then $J$ is the Joseph ideal. With the obtained results we can now prove the same statement for $\osp$.

\begin{theorem}
\label{Garresult}
Consider a two-sided ideal $\mathfrak{K}$ in $\cU(\mg)$ for $\mg=\osp$ with $m-2n>2$. If $\mathfrak{K}$ has infinite codimension and the associated graded ideal $gr(\mathfrak{K})$ in $S(\mg)=\odot\mg$ satisfies
\begin{eqnarray*}
\left(gr(\mathfrak{K})\,\cap\, \odot^2\mg\right)\,\oplus\, \mg\topa\mg&=&\odot^2\mg,
\end{eqnarray*}
then $\mathfrak{K}$ is equal to the Joseph ideal $\mathfrak{J}^1$.
\end{theorem}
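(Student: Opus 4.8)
The plan is to extract from the degree-two hypothesis the explicit low-order generators of $\mathfrak{K}$, to recognise the ideal they generate as a member of the one-parameter family $\mathfrak{J}^1_\lambda$, and then to use the codimension dichotomy of Theorem~\ref{1cases} to force $\lambda$ to the critical value. By Theorem~\ref{decomposition} the symmetric square decomposes as
\begin{eqnarray*}
\odot^2\mg&\cong&\left(\mg\topa\mg\right)\oplus K^{m|2n}_{2\epsilon_1}\oplus\left(\mg\topb\mg\right)\oplus K^{m|2n}_0,
\end{eqnarray*}
and since $gr(\mathfrak{K})\cap\odot^2\mg$ is automatically a $\mg$-submodule, the hypothesis says exactly that it equals the complement $E:=K^{m|2n}_{2\epsilon_1}\oplus\left(\mg\topb\mg\right)\oplus K^{m|2n}_0$ of the Cartan product $\mg\topa\mg$. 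This is precisely the degree-two symbol space of the Joseph ideal, whose defining symmetric relation from \eqref{quadrel1} has leading term the projection of $X\odot Y$ onto $E$.

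First I would lift this from symbols to genuine elements of $\cU(\mg)$. Because $\mathfrak{K}$ is two-sided it is $\mathrm{ad}$-invariant, so each filtered piece is a $\mg$-module; moreover infinite codimension forces $\mathfrak{K}\cap\cU_{\le1}(\mg)=0$ (a nonzero element of $\mg$ would generate all of $\mg$ by irreducibility, hence a finite-codimension ideal, and a nonzero scalar would give everything). Consequently an element of $\mathfrak{K}$ of filtration degree two is uniquely determined by its symbol, and the assignment sending a realised symbol $w\in E$ to its lower-order correction is a well-defined $\mg$-morphism into $\cU_{\le1}(\mg)\cong\mg\oplus\mC$ on each irreducible component. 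Schur's lemma gives $\Hom_\mg(K^{m|2n}_{2\epsilon_1},\mg\oplus\mC)=0=\Hom_\mg(\mg\topb\mg,\mg\oplus\mC)$, since neither module is the adjoint $K^{m|2n}_{\epsilon_1+\epsilon_2}$ nor trivial, so $\mathfrak{K}$ contains the symmetrised copies of $K^{m|2n}_{2\epsilon_1}$ and $\mg\topb\mg$ with no correction. On the trivial component $\Hom_\mg(K^{m|2n}_0,\mg\oplus\mC)=\mC$, so the correction is scalar and $\mathfrak{K}$ contains a single element $\mathrm{Cas}-c$ for some $c\in\mC$, with $\mathrm{Cas}$ the Casimir of \eqref{tracepart}. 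These are exactly the generators of $\mathfrak{J}^1_\lambda$ for the value of $\lambda$ matching $c$, whence $\mathfrak{K}\supseteq\mathfrak{J}^1_\lambda$.

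The parameter is then pinned down by codimension. Theorem~\ref{1cases} shows that $\mathfrak{J}^1_\lambda$ has finite codimension in $\cU(\mg)$ for every $\lambda\neq-\tfrac{M-4}{4(M-1)}$, whereas the larger ideal $\mathfrak{K}$ has infinite codimension. Hence $\lambda=-\tfrac{M-4}{4(M-1)}$ is critical and $\mathfrak{K}\supseteq\mathfrak{J}^1$. This step rests entirely on the special tensor $S$ of Lemma~\ref{dimhom1} through the two-way reduction of Theorem~\ref{quadJ1}; it is the mechanism that makes the whole characterisation work, and I would simply invoke it.

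The delicate point, and the main obstacle, is upgrading $\mathfrak{K}\supseteq\mathfrak{J}^1$ to equality. I would pass to associated graded ideals in $S(\mg)=\odot\mg$, where $gr(\mathfrak{K})\supseteq gr(\mathfrak{J}^1)$ and, by the hypothesis, the two already agree in degree two. Equality of the filtered ideals follows once $gr(\mathfrak{K})=gr(\mathfrak{J}^1)$, and for this the natural route is a rigidity statement: the only $\mg$-invariant graded ideals of $S(\mg)$ containing $gr(\mathfrak{J}^1)$ are $gr(\mathfrak{J}^1)$ itself and the augmentation ideal $\bigoplus_{k>0}\odot^k\mg$, the latter being excluded as it would make $\mathfrak{K}$ finite-codimensional. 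Establishing this is the super-analogue of the classical fact that the associated variety of the Joseph ideal is the closure of the minimal nilpotent orbit, whose only proper invariant subvariety is the origin. Since the orbit picture is not developed in this paper, I expect the cleanest substitute is to combine the fact that $gr(\mathfrak{J}^1)$ is generated by $E$ in degree two with the maximality of $\mathfrak{J}^1=\mathrm{Ann}_{\cU(\mg)}(\cH)$ among infinite-codimension ideals, which will follow from the identification in Theorem~\ref{idealrep}; this is the step I would expect to require the most care.
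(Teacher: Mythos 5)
Your first two steps are sound and essentially coincide with the paper's argument: the paper likewise uses two-sidedness to deduce that the lower-order corrections $Z(X,Y)\in\mg$ and $c(X,Y)\in\mC$ to the degree-two symbols extend to $\mg$-module morphisms $\mg\odot\mg\to\mg$ and $\mg\odot\mg\to\mC$, kills $Z$ and fixes $c=\lambda\langle\cdot,\cdot\rangle$ by Theorem \ref{decomposition}, and then pins $\lambda=-\frac{M-4}{4(M-1)}$ via Theorem \ref{quadJ1} and infinite codimension, obtaining $\mathfrak{K}\supseteq\mathfrak{J}^1$ with agreement up through filtration degree two.

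The genuine gap is in your final step, and the route you sketch for closing it is circular. You propose to obtain the needed maximality of $\mathfrak{J}^1$ among infinite-codimension ideals from its identification with $\mathrm{Ann}_{\cU(\mg)}(\cH)$ in Theorem \ref{idealrep}; but the proof of Theorem \ref{idealrep} in the paper explicitly invokes Theorem \ref{Garresult} to show the annihilator is not bigger than $\mathfrak{J}^1$, so you cannot use it here. (Being an annihilator ideal would not by itself yield maximality anyway.) The paper closes the argument with the Cartan-product machinery of Section \ref{secCartan}, which you never use: passing to the ideal $\cK\subset\otimes\mg$ and its leading-term spaces $\cK_k$, one has $\cK_2=I_2$ by hypothesis, hence $\cK_k\supseteq I_k$ by equation \eqref{altdefIk}; if $\cK$ were strictly larger than $\cJ^1$, a leading-term subtraction argument at the first level of disagreement forces $\cK_k\supsetneq I_k$ for some $k$, and then Lemma \ref{Lemequiv} (equivalence of (1) and (3)) combined with Theorem \ref{Cartanosp} — which shows $\beta_k=\cU(\mg)\cdot v_+^{\otimes k}$ for the adjoint representation precisely when $m-2n>2$, via the distinct Casimir eigenvalues on the candidate highest weights — gives $\cK_k=\otimes^k\mg$ and hence finite codimension, a contradiction. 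This is exactly the rigidity statement you correctly identify as necessary ("any invariant ideal properly containing $gr(\mathfrak{J}^1)$ is the augmentation ideal"), but it must be proved, and it is the only place where the hypothesis $m-2n>2$ enters; your proposal neither proves it nor accounts for where that hypothesis is used.
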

\begin{proof}
Define the ideal $\cK$ in $\otimes\mg$ as the kernel of the composition of the projections
\begin{eqnarray*}
\otimes\mg\to\cU(\mg)\to\cU(\mg)/\mathfrak{K}.
\end{eqnarray*}
We define $\cK_k$, a subspace of $\otimes^k\mg$, as the projection of $\cK\cap\left(\oplus_{j=0}^k\otimes^j \mg\right)$ onto $\otimes^k\mg$. By construction, each $\cK_k$ is a closed subspace under the adjoint action of $\mg$. It is also clear that $\mg\wedge\mg$ is inside $\cK_2$ since for each $X,Y\in\mg$, $2X\wedge Y-[X,Y]$ is inside the kernel of the first projection. Since $\left(gr(\mathfrak{K})\,\cap\, \odot^2\mg\right)$ is naturally embedded in $\cK_2$, we obtain
\begin{eqnarray*}
\cK_2\,\oplus\,\mg\topa\mg&=&\mg\otimes\mg.
\end{eqnarray*}
Therefore $\cK_2$ is equal to the representation $I_2$, defined in Equation \eqref{defBk} for $\mg=\osp$ and $V$ the adjoint representation. Since $\cK$ is also a two-sided ideal, it follows that $\cK_k\supset \cK_{k-1}\otimes\mg+\mg\otimes\cK_{k-1}$, which results in $\cK_k\supset I_k$, by equation \eqref{altdefIk}. Likewise, for the Joseph ideal $\cJ^1$ in $\otimes\mg$, we can define $\left(\cJ^1\right)_k$, which is equal to $I_k$.

From the assumed property of $gr(\mathfrak{K})$ it follows that for each $X,Y\in\mg$, there must be at least one element of the form $XY+ (-1)^{|X||Y|}YX -2X\topa Y+Z(X,Y)+c(X,Y)$ inside $\mathfrak{K}\cap\cU_2(\mg)$ with $Z(X,Y)\in\mg$ and $c(X,Y)\in\mC$. Since $\mathfrak{K}$ is a two-sided ideal it follows that $Z$ and $c$ extend to a $\mg$-module morphism $\mg\odot\mg\to\mg$ and $\mg\odot\mg\to\mC$, which by Theorem \ref{decomposition} imply that $Z=0$ and $c(X,Y)=\lambda \langle X,Y\rangle$ for some constant $\lambda$. Theorem \ref{quadJ1} then implies that the only possible value of $\lambda$ which does not contradict the infinite codimension of $\mathfrak{K}$ is $\lambda=-(M-4)/(4(M-1))$. From this it follows immediately that $\cK$ contains $\cJ^1$ and $\cK\cap\left(\oplus_{j=0}^{2}\otimes^j \mg\right)=\cJ^1\cap\left(\oplus_{j=0}^{2}\otimes^j \mg\right)$.

Now if $\cK$ were bigger than $\cJ^1$, then for one value of $k$, $\cK\cap\left(\oplus_{j=0}^k\otimes^j \mg\right)$ would be bigger than $\cJ^1\cap\left(\oplus_{j=0}^k\otimes^j \mg\right)$ while $\cK\cap\left(\oplus_{j=0}^{k-1}\otimes^j \mg\right)=\cJ^1\cap\left(\oplus_{j=0}^{k-1}\otimes^j \mg\right)$ holds, since $\cK\cap\left(\oplus_{j=0}^{2}\otimes^j \mg\right)=\cJ^1\cap\left(\oplus_{j=0}^{2}\otimes^j \mg\right)$. This implies that $\cK_k$ is bigger than $I_k$, otherwise every element in $\cK\cap\left(\oplus_{j=0}^k\otimes^j \mg\right)$ has a corresponding element in $\cJ^1\cap\left(\oplus_{j=0}^k\otimes^j \mg\right)$ with the same leading term, so by subtracting these two we obtain elements in $\cK\cap\left(\oplus_{j=0}^{k-1}\otimes^j \mg\right)$ which are not in $\cJ^1\cap\left(\oplus_{j=0}^{k-1}\otimes^j \mg\right)$.

If there would be such a $\cK_k$ which is strictly bigger than $I_k$, than by Theorem \ref{Lemequiv} and Lemma \ref{Cartanosp} (1)$\leftrightarrow$(3) it would follow that $\cK_k=\otimes^k\mg$, which implies $\cK_j=\otimes^j\mg$ for all $j\ge k$. Since $\cK$ has infinite codimension this is not possible and the theorem is proven.
\end{proof}


\subsection{The corresponding representation of $\osp$}
\label{secreposp}
In this section we will consider the superspace $\mR^{p|2n}$ from Section \ref{preli}, for $p=m$ and $p=m-2$. This corresponds to a super version of the ambient space method, used for the minimal representation and the Joseph ideal for $\mathfrak{so}(m)$ in e.g. \cite{MR1108044, MR2180410, Higherpowers, MR2020550}.

 For $p=m-2$ we will use the notations from Section \ref{preli}, i.e. the operators $\Delta$, $R^2$ and $\mE$ of equation \eqref{DRE} and capital letters for the variables $\{X_i|\, i=1,\cdots,m-2+2n\}$.

For $p=m$ we use small letters for the variables, $\{x_a|\, a=1,\cdots,m+2n\}$ where the first $m$ are commuting and the last $2n$ anti-commuting. The corresponding $\mathfrak{sl}(2)$ realization on $\mR^{m|2n}$ will be denoted by $\widetilde{\Delta}$, $\widetilde{R}^2$, $\widetilde{\mE}$. These correspond to the definition in equation \eqref{DRE} with substitution $h\to g$. For the partial differential operators we use the notation
\begin{eqnarray*}
\partial_a=\sum_b g_{ba}\partial_{x_b}&\quad\mbox{which implies}\quad& \partial_a (x_c)=g_{ca},
\end{eqnarray*}
for $g$ the orthosymplectic metric on $\mR^{m|2n}$ used in the definition of $\mathfrak{osp}(m|2n)$.

We consider the canonical realization of $\osp$ as first order differential operators on $C^\infty(\mR^{m|2n})$, 
\[\mD_V=\sum_{a,b}V^{ab}x_a\partial_b=\sum_{a,c}V^a{}_c\, x_a\partial_{x_c},\]
for $V\in\osp$, see e.g. \cite{OSpHarm}. Since this corresponds to a representation of $\osp$, we know that
\begin{eqnarray}
\label{relantis}
\mD_U\mD_V-(-1)^{|U||V|}\mD_V\mD_U&=&\mD_{[U,V]}.
\end{eqnarray}
This action of $\osp$ commutes with the generators of $\mathfrak{sl}(2)$ given by $\widetilde{\Delta}$, $\widetilde{R}^2$ and $\widetilde{\mE}$. We assume the metric $h\in\mR^{(m-2+2n)\times (m-2+2n)}$ on $\mR^{m-2|2n}$ and the metric $g\in\mR^{(m+2n)\times (m+2n)}$ on the bigger (usualy termed ambient) 
space $\mR^{m|2n}$ to be related by
\begin{eqnarray*}
g&=&\left( \begin{array}{ccc}0&1&0\\  \vspace{-3.5mm} \\1&0&0\\  \vspace{-3.5mm} \\ 0&0&h \end{array} \right).
\end{eqnarray*}
With the association $x_{i+2}=X_{i}$ for $i=1,\cdots,m-2+2n$, this implies
\begin{eqnarray*}
\widetilde{\Delta}=2\partial_{x_1}\partial_{x_2}+\Delta,\quad \widetilde{R}^2=2x_1x_2+R^2\quad\mbox{and }\quad \widetilde{\mE}=x_1\partial_{x_1}+x_2\partial_{x_2}+\mE.
\end{eqnarray*}
The function space $C^\infty(\mR^{m|2n}_+):=C^\infty(\mR^m_+)\otimes\Lambda_{2n}$ is the algebra of smooth functions on the half space $\mR^m_+=\{(x_1,\cdots,x_m)|\, x_1>0\}$ with values in the Grassmann algebra $\Lambda_{2n}$. We take the quotient space $C^\infty(\mR^{m|2n}_+)/(\widetilde{R}^2)$ with respect to the ideal generated by the function $\widetilde{R}^2$. Then we can restrict to functions of homogeneous degree $\alpha$, for $\alpha\in\mR$:
\begin{eqnarray*}
\cF_{\alpha}&=&\{f\in C^\infty(\mR^{m|2n}_+)/(\widetilde{R}^2)|\, \widetilde{\mE}f=\alpha f\}\subset  C^\infty(\mR^{m|2n}_+)/(\widetilde{R}^2).
\end{eqnarray*}
The space $\cF_{2-M/2}$ has a useful property. The Laplace operator $\widetilde{\Delta}$ maps functions in $C^\infty(\mR^{m|2n}_+)$ of homogeneous degree $2-M/2$ to functions of degree $-M/2$. Consider $\widetilde{R}^2k\in C^\infty(\mR^{m|2n}_+)$ of degree $2-M/2$ (so $k$ is of degree $-M/2$), the equation
\begin{eqnarray*}
\widetilde{\Delta}\widetilde{R}^2k&=&\widetilde{R}^2\widetilde{\Delta}k+(4\widetilde{\mE}+2M)k=\widetilde{R}^2\widetilde{\Delta}k
\end{eqnarray*}
is an immediate consequence of the $\mathfrak{sl}(2)$-relations among $\widetilde{\Delta}/2$, $\widetilde{R}^2/2$ and $\widetilde{\mE}+M/2$. This implies that $\widetilde{\Delta}$ acting from $C^\infty(\mR^{m|2n}_+)_{2-M/2}$ to $C^\infty(\mR^{m|2n}_+)_{-M/2}$ naturally descends to an action from $\cF_{2-M/2}$ to $\cF_{-M/2}$. Therefore we can define a subspace
\begin{eqnarray*}
\widetilde{\cH}&=&\{f\in\cF_{2-M/2}\,|\,\widetilde{\Delta} f=0\}\subset \cF_{2-M/2}.
\end{eqnarray*}
Since the action of $\osp$ commutes with $\widetilde{\Delta}$, $\widetilde{R}^2$ and $\widetilde{\mE}$, the space $\widetilde{\cH}$ is an $\osp$-module. To consider the ideal in the universal enveloping algebra corresponding to this representation we need to consider the composition of two vector fields $\mD_U$ and $\mD_V$ for $U,V\in\osp$. This yields
\begin{eqnarray*}
\mD_U\mD_V&=&(-1)^{[b][c]}U^{ab}V^{cd}x_ax_c\partial_b\partial_d+U^{a}{}_bV^{bd}x_a\partial_d.
\end{eqnarray*}
Therefore, the $\mg$-module morphism $\otimes\mg\to$ Diff$(\mR^{m|2n})$, given by 
\[V_1\otimes V_2\otimes \cdots \otimes V_k\to\mD_{V_1\otimes V_2\otimes \cdots V_k}\equiv\mD_{V_1}\mD_{V_2}\cdots\mD_{V_k},\]
satisfies
\begin{eqnarray}
\label{DXgg}
\mD_X&=&(-1)^{[b][c]}X^{abcd}x_ax_c\partial_b\partial_d+X^{a}{}_b{}^{bd}x_a\partial_d,
\end{eqnarray}
for $X\in \mg\otimes\mg$.

\begin{lemma}
\label{DUDV}
When acting on $\widetilde{\cH}$, the composition of vector fields satisfies
\begin{eqnarray*}
\mD_{U\otimes V}=\mD_U\mD_V&\cong & \mD_{U\topa V}+\frac{1}{2}\mD_{[U,V]}+\lambda\mD_{\langle U,V\rangle}
\end{eqnarray*}
for $\lambda=-\frac{M-4}{4(M-1)}$ and for all $U,V\in \mg=\osp$ with $m-2n\not\in\{1,2\}$. Moreover, when acting on $\cF_{2-M/2}$ the composition satisfies
\begin{eqnarray*}
\mD_{U\otimes V}&\equiv & \mD_{U\topa V}+\frac{1}{2}\mD_{[U,V]}+\lambda\mD_{\langle U,V\rangle}\quad\mod\,\widetilde{\Delta}.
\end{eqnarray*}
\end{lemma}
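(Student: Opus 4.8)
The plan is to exploit the linearity of the realization $X\mapsto\mD_X$ given in \eqref{DXgg} together with the complete decomposition of $\mg\otimes\mg$ into its six irreducible summands from Theorem \ref{decomposition}. Writing $U\otimes V$ as the sum of its isotypic components, I would evaluate $\mD$ on each summand separately and show that on $\widetilde{\cH}$ only three of them survive, producing respectively $\mD_{U\topa V}$, $\tfrac12\mD_{[U,V]}$ and the scalar $\lambda\langle U,V\rangle$. The natural grouping is: the super skew part $\mg\wedge\mg$; the two traceless super symmetric summands $\mg\topa\mg$ and $K^{m|2n}_{\epsilon_1+\epsilon_2+\epsilon_3+\epsilon_4}$; and the two trace summands $K^{m|2n}_{2\epsilon_1}\oplus K^{m|2n}_0$, which make up the image of the super symmetric part under the map $\phi$ of Lemma \ref{embeddingCC}.

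First I would treat the super skew part. By definition of the realization, $\mD_{U\otimes V}-(-1)^{|U||V|}\mD_{V\otimes U}=\mD_U\mD_V-(-1)^{|U||V|}\mD_V\mD_U$, so \eqref{relantis} shows that $\mD$ maps $\tfrac12(U\otimes V-(-1)^{|U||V|}V\otimes U)$ to $\tfrac12\mD_{[U,V]}$ as an identity of operators; since the bracket is the projection of $\mg\wedge\mg$ onto its adjoint summand $K^{m|2n}_{\epsilon_1+\epsilon_2}$, this both accounts for the term $\tfrac12\mD_{[U,V]}$ and shows the skew summand $K^{m|2n}_{2\epsilon_1+\epsilon_2+\epsilon_3}$ contributes nothing. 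For the traceless super symmetric summands I would argue at the level of the symbol: because the multiplications $x_ax_c$ and the derivatives $\partial_b\partial_d$ super commute, $\mD_X$ depends on $X^{abcd}$ only through its part that is super symmetric in $(a,c)$ and in $(b,d)$. The totally super skew four-form summand $K^{m|2n}_{\epsilon_1+\epsilon_2+\epsilon_3+\epsilon_4}=\mathrm{Im}(q)$ of Remark \ref{qsymm} is therefore annihilated identically, while the Cartan summand $\mg\topa\mg$ is traceless, hence has no first order part in \eqref{DXgg} and is carried unchanged to $\mD_{U\topa V}$.

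The heart of the argument is the trace part $\phi(Z)\in K^{m|2n}_{2\epsilon_1}\oplus K^{m|2n}_0$, where $Z\in\mC^{m|2n}\odot\mC^{m|2n}$ is fixed by $p(U\odot V)$ through \eqref{pphi}. Substituting $X=\phi(Z)$ into \eqref{DXgg} and contracting the four metric terms of Lemma \ref{embeddingCC} separates $\mD_{\phi(Z)}$ into four pieces: the term contracting the two $x$'s reproduces left multiplication by $\widetilde{R}^2$, the term contracting the two $\partial$'s reproduces $\widetilde{\Delta}$, and the two mixed terms produce an Euler type operator sitting against the genuine first order operator $Z^{ad}x_a\partial_d$. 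On $\widetilde{\cH}$ I would use $\widetilde{R}^2\equiv0$, $\widetilde{\Delta}=0$ and $\widetilde{\mE}=2-M/2$ (for the second statement one instead simply discards the $\widetilde{R}^2$ and $\widetilde{\Delta}$ pieces modulo the ideal $(\widetilde{R}^2)$ and modulo $\widetilde{\Delta}$). After normal ordering the mixed terms and adding the explicit first order part $\tfrac14\bigl((M-2)Z^{ad}+g^{ad}Z_c{}^c\bigr)x_a\partial_d$ of \eqref{DXgg}, I expect the genuine first order operator $Z^{ad}x_a\partial_d$, i.e. the $K^{m|2n}_{2\epsilon_1}$ direction, to cancel exactly at the homogeneity $2-M/2$, leaving only the scalar proportional to the trace $Z_c{}^c\sim\langle U,V\rangle$. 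Using \eqref{tracepphi} to evaluate that scalar should return precisely $\lambda=-\tfrac{M-4}{4(M-1)}$, the factor $M-1$ entering through \eqref{tracepphi}.

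The step I expect to be the main obstacle is precisely this last reduction in the super setting: contracting a metric factor against one multiplication and one derivative produces a signed Euler operator $\sum_c(-1)^{[c]}x_c\partial_{x_c}$ rather than the plain $\widetilde{\mE}$, so the two mixed terms cannot be handled individually and must be combined, with all super signs tracked, so that the genuine $\widetilde{\mE}$ (which is scalar on $\cF_{2-M/2}$) emerges and drives the first order cancellation at the critical degree. A reassuring consistency check is that the surviving scalar is exactly the $K^{m|2n}_0$ component, which by the remark after \eqref{tracepart} corresponds to the quadratic Casimir operator; its eigenvalue on the irreducible $\widetilde{\cH}$ must reproduce the same value $\lambda$ that was singled out algebraically in Theorem \ref{quadJ1}.
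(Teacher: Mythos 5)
Your proposal is correct and matches the paper's proof in all essentials: the paper likewise splits off the super anti-symmetric part via \eqref{relantis} (so only the bracket, i.e.\ the adjoint summand of $\mg\wedge\mg$, survives), shows $\mD_X$ vanishes identically on $K^{m|2n}_{\epsilon_1+\epsilon_2+\epsilon_3+\epsilon_4}$ by playing the symmetry of $x_ax_c\partial_b\partial_d$ against the skewness in $(a,c)$, and reduces the trace part through the $\mathfrak{sl}(2)$-relations among $\widetilde{\Delta}$, $\widetilde{R}^2$, $\widetilde{\mE}$ at the critical homogeneity $2-M/2$, evaluating the surviving scalar against $X_{ab}{}^{ba}=M(M-1)/2$ to obtain $\lambda=-\frac{M-4}{4(M-1)}$. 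Your one deviation — treating the trace summands uniformly as $\phi(Z)$ for a general $Z\in\mC^{m|2n}\odot\mC^{m|2n}$ instead of splitting into the traceless direction $K^{m|2n}_{2\epsilon_1}$ and the pure-trace direction $K^{m|2n}_0$ — is precisely the computation the paper reserves for $M=0$, so it conveniently covers that case without appealing to the complete reducibility of Theorem \ref{decomposition} (which fails at $M=0$, making your opening reference to ``six irreducible summands'' slightly overstated but harmless, since your actual grouping only needs Lemma \ref{subA}).
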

\begin{proof}
The proof of this relation can be split up into the super symmetric and the super anti-symmetric part. When evaluated on $\widetilde{\cH}$, we have to get
\begin{eqnarray*}
\mD_U\mD_V+(-1)^{|U||V|}\mD_V\mD_U&\cong & 2\mD_{U\topa V}+2\lambda\mD_{\langle U,V\rangle}\quad\mbox{and}\\
\mD_U\mD_V-(-1)^{|U||V|}\mD_V\mD_U&\cong &\mD_{[U,V]}.
\end{eqnarray*}
The second equation trivially holds, see equation \eqref{relantis}. The left-hand side of the first equation represents $\mD_X$ for a general tensor $X\in\mg\odot\mg$. First we exclude the case $M=m-2n=0$. By choosing $X$ to be in one of the four submodules of $\mg\odot\mg$ in Theorem \ref{decomposition}, the statement is equivalent to 
\begin{eqnarray*}
& & \mD_X=0\qquad\quad \mbox{ on }\quad \widetilde{\cH}\quad \mbox{ if } X\in K^{m|2n}_{\epsilon_1+\epsilon_2+\epsilon_3+\epsilon_4},\\ 
& & \mD_X=0\qquad\quad\mbox{ on }\quad\widetilde{\cH}\quad \mbox{ if } X\in K^{m|2n}_{2\epsilon_1},\\
& & \mD_X=-\frac{M-4}{4(M-1)}\sum_{ab}X_{ab}{}^{ba}\quad\mbox{ on }\widetilde{\cH}\mbox{ if }X\in K^{m|2n}_0.
\end{eqnarray*}
First consider $X\in K^{m|2n}_{2\epsilon_1}$, so
\begin{eqnarray*}
X^{abcd}&=&\frac{1}{4}\left(g^{bc}A^{ad}-(-1)^{[a][b]}g^{ac}A^{bd}-(-1)^{[c][d]}g^{bd}A^{ac}+(-1)^{[a][b]+[c][d]}g^{ad}A^{bc}\right),
\end{eqnarray*}
with $A\in \odot^2_0\mC^{m|2n}$ a super symmetric traceless tensor, see Lemma \ref{embeddingCC}. Equation \eqref{DXgg} then implies
\begin{eqnarray*}
4\mD_X&=&A^{ad}x_a\widetilde{\mE}\partial_d+MA^{ad}x_a\partial_d-A^{bd}\widetilde{R}^2\partial_b\partial_d-A^{ad}x_a\partial_d\\
&-&A^{ac}x_ax_c\widetilde{\Delta}-A^{ad}x_a\partial_d+(-1)^{[b][c]}A^{bc}x_c\widetilde{\mE}\partial_b.
\end{eqnarray*}
When evaluated on the space $\widetilde{\cH}$, this reduces to
\begin{eqnarray*}
4\mD_X&=&(1-\frac{M}{2})A^{ad}x_a\partial_d+MA^{ad}x_a\partial_d-A^{ad}x_a\partial_d-A^{ad}x_a\partial_d+(1-\frac{M}{2})(-1)^{[a][d]}A^{da}x_a\partial_d\\
&=&(\frac{M}{2}-1)\left(A^{ad}-(-1)^{[a][d]}A^{da}\right)x_a\partial_d,
\end{eqnarray*}
which is zero by the super symmetry of $A$. 

As for the total trace part $K_0^{m|2n}$ in \eqref{tracepart}, $X^{abcd}=\frac{1}{2}\left(g^{bc}g^{ad}-(-1)^{[a][b]}g^{ac}g^{bd}\right)$, the associated differential operator is
\begin{eqnarray*}
\mD_X&=&\frac{1}{2}\left(\widetilde{\mE}(\widetilde{\mE}-1)-\widetilde{R}^2\widetilde{\Delta}+(M-1)\widetilde{\mE}\right).
\end{eqnarray*}
When evaluated on the elements of $\widetilde{\cH}$ this becomes
\begin{eqnarray*}
\mD_X&\cong &-\frac{M}{8}(M-4).
\end{eqnarray*}
The comparison with $X_{ab}{}^{ba}=M(M-1)/2$, see e.g. equation \eqref{tracepphi}, then yields the result.

The result for $K_{\epsilon_1+\epsilon_2+\epsilon_3+\epsilon_4}^{m|2n}$ follows from the explicit form of the projection operator in equation \eqref{FormCartan2},
\begin{eqnarray*}
X^{abcd}&=&\frac{1}{3}\left(V^{abcd}+(-1)^{[a]([b]+[c])}V^{bcad}-(-1)^{[b][c]}V^{acbd}\right)
\end{eqnarray*}
with $V^{abcd}\in \mg\odot\mg$. This is totally traceless, therefore
\begin{eqnarray*}
3\mD_X&=&(-1)^{[b][c]}V^{abcd}x_ax_c\partial_b\partial_d+(-1)^{[a]([b]+[c])+[b][c]}V^{bcad}x_ax_c\partial_b\partial_d-V^{acbd}x_ax_c\partial_b\partial_d.
\end{eqnarray*}
This is zero since $V^{acbd}=-(-1)^{[a][c]}V^{cabd}$ while $x_ax_c=(-1)^{[a][c]}x_cx_a$.

For the case $M=0$ this can be proven very similarly. Instead of considering $K^{m|2n}_{2\epsilon_1}$ and $K^{m|2n}_{0}$ independently one needs to prove that
\begin{eqnarray*}
D_{X}&=&-\sum_{ab}X_{ab}{}^{ba}
\end{eqnarray*}
for $X=\phi(A)$ with $\phi$ defined in the proof of Lemma \ref{embeddingCC} and $A\in\mC^{m|2n}\odot\mC^{m|2n}$, which follows from a direct calculation.

The slightly stronger second statement follows immediately from the calculations above.
\end{proof}

Note that the right-hand side in Lemma \ref{DUDV} cannot be further simplified, i.e. all terms are non-zero and do 
not correspond to lower-order differential operators. This follows from the fact that they correspond to irreducible $\osp$-representations.

\begin{lemma}
\label{alphabeta}
The function space $\cF_{2-M/2}\subset C^\infty(\mR_+^{m|2n})/(\widetilde{R}^2)$ is isomorphic to $C^\infty(\mR^{m-2|2n})$.
\end{lemma}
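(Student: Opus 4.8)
The plan is to realize the quotient by $(\widetilde{R}^2)$ concretely and then strip off the homogeneity condition. Since we work over the half-space $x_1>0$, the even function $x_1$ is invertible, and the generator $\widetilde{R}^2=2x_1x_2+R^2$ has $\partial_{x_2}\widetilde{R}^2=2x_1$ invertible there. Hence I would introduce $w:=\widetilde{R}^2=2x_1x_2+R^2$ and use $(x_1,w,X)$, with $X=(X_1,\dots,X_{m-2+2n})$, as a new coordinate system on $\mR^{m|2n}_+$: one solves $x_2=(w-R^2)/(2x_1)$, and since $1/x_1$ and $R^2$ are even this substitution is a well-defined homomorphism of the function superalgebras. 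In these coordinates the ideal $(\widetilde{R}^2)$ is just $(w)$, so
$$C^\infty(\mR^{m|2n}_+)/(\widetilde{R}^2)\;\cong\;C^\infty(\mR_+\times\mR^{m-2|2n}),$$
the isomorphism being restriction to $w=0$, i.e.\ the substitution $x_2\mapsto -R^2/(2x_1)$; here $\mR_+$ carries $x_1$ and $\mR^{m-2|2n}$ carries $X$.

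Next I would transport the Euler operator. Because $\widetilde{\mE}$ generates the scaling $x\mapsto tx$, under which $x_1$, $w$ and $X$ are homogeneous of degrees $1$, $2$ and $1$, in the new coordinates $\widetilde{\mE}=x_1\partial_{x_1}+2w\partial_w+\mE_X$ with $\mE_X=\sum_i X_i\partial_{X_i}$. As $\widetilde{R}^2$ is homogeneous of degree two, $\widetilde{\mE}$ preserves $(\widetilde{R}^2)=(w)$ and descends to the quotient; setting $w=0$ kills the middle term, so the induced operator on $C^\infty(\mR_+\times\mR^{m-2|2n})$ is $x_1\partial_{x_1}+\mE_X$. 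Consequently $\cF_{2-M/2}$ is identified with the space of $G\in C^\infty(\mR_+\times\mR^{m-2|2n})$ satisfying $(x_1\partial_{x_1}+\mE_X)G=(2-M/2)G$.

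Finally, writing $\alpha=2-M/2$, such homogeneous functions are in bijection with $C^\infty(\mR^{m-2|2n})$ by restriction to the slice $x_1=1$: the inverse sends $\phi$ to $G(x_1,X)=x_1^{\alpha}\phi(X/x_1)$, which is smooth on the half-space for every real $\alpha$ precisely because $x_1>0$, and a one-line computation gives $(x_1\partial_{x_1}+\mE_X)G=\alpha G$ while $G|_{x_1=1}=\phi$. Composing the two identifications yields the claimed isomorphism, given explicitly by evaluating a representative at $x_1=1$, $x_2=-R^2/2$. I expect the only delicate point to be the first step: one must verify that $x_2\mapsto(w-R^2)/(2x_1)$ is a legitimate change of variables in the $\mZ_2$-graded setting and that the kernel of restriction to $w=0$ is exactly $(\widetilde{R}^2)$ (a super Hadamard/Taylor division argument), rather than something larger. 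This is where the restriction to $x_1>0$, making $x_1$ an invertible even element, is essential, and it is also what forces the use of $C^\infty(\mR^m_+)\otimes\Lambda_{2n}$ instead of functions on all of $\mR^m$.
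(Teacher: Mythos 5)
Your proof is correct, and in the end it produces exactly the paper's isomorphism: your composite map is evaluation of a representative at $x_1=1$, $x_2=-R^2/2$, and your inverse is $\phi\mapsto x_1^{2-M/2}\phi(\mathbf{x}/x_1)$, which are precisely the maps $\alpha$ and $\beta$ the paper writes down directly and checks to be mutually inverse. The difference is one of packaging, and it is to your credit: the paper's verification of $\beta\circ\alpha=\mathrm{id}$ hinges on the unproved assertion that $f(x_1,-R^2/(2x_1),\mathbf{x})=f(x_1,x_2,\mathbf{x})$ holds in $C^\infty(\mR^{m|2n}_+)/(\widetilde{R}^2)$, i.e.\ that replacing $x_2$ by $-R^2/(2x_1)$ only changes $f$ by an element of the ideal. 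Your straightening coordinates $(x_1,w,X)$ with $w=\widetilde{R}^2$, together with the Hadamard/Taylor division showing that the kernel of restriction to $w=0$ is exactly $(w)$ (valid here since $w$ ranges over all of $\mR$ for each fixed body point and $2x_1$ is an invertible even element on the half-space), supplies exactly this missing step; your transport of $\widetilde{\mE}$ to $x_1\partial_{x_1}+2w\partial_w+\mE_X$ and the subsequent slice at $x_1=1$ are also correct, with the nilpotent part of $R^2$ handled, as in the paper, by finite Taylor expansion in the Grassmann variables. In short: same isomorphism, but your route makes rigorous the quotient identification that the paper leaves implicit, at the cost of a longer argument.
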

\begin{proof}
We consider the super vector space morphisms
\begin{eqnarray*}
& & \alpha:\cF_{2-M/2}\to C^\infty(\mR^{m-2|2n}),\\
& & \alpha(f)(\mathbf{x})=f(1,-R^2/2,\mathbf{x}),\\
& & \beta:C^\infty(\mR^{m-2|2n})\to\cF_{2-M/2},\\
& & \beta(F)(x_1,x_2,\mathbf{x})=x_1^{2-\frac{M}{2}}F(\mathbf{x}/{x_1}),
\end{eqnarray*}
with $\bold{x}=(X_1,\cdots,X_{m-2+2n})=(x_3,\cdots,x_{m+2n})$. Functions in $R^2$ are defined by a finite Taylor series in the Grassmann variables. The morphism $\alpha$ is well-defined since $\alpha(\widetilde{R}^2h)=0$.
The composition $\beta\circ\alpha$ on $\cF_{2-M/2}$ satisfies
\begin{eqnarray*}
\beta\circ\alpha (f) (x_1,x_2,\mathbf{x})&=&x_1^{2-\frac{M}{2}}f(1,-R^2/(2x_1^2),\mathbf{x}/x_1)\\
&=&f(x_1,-R^2/(2x_1),\mathbf{x})=f(x_1,x_2,\mathbf{x}).
\end{eqnarray*}
Similarly we can show that $\alpha\circ\beta$ is the identity on $C^\infty(\mR^{m-2|2n})$.
\end{proof}
This isomorphism gives an induced action of $\osp$ on functions on $\mR^{m-2|2n}$, defined by 
\begin{eqnarray}
\label{cDX}
\cD_X&=&\alpha\circ\mD_X\circ\beta \qquad\mbox{for}\quad X\in\osp.
\end{eqnarray}
This extends immediately to $\cD_X$ for $X\in\otimes\mg$.

It is clear that $\widetilde{\Delta}\beta(F)=0$ if and only if $\Delta F=0$, which implies that under the isomorphism between $\cF_{2-M/2}$ and $C^\infty(\mR^{m-2|2n})$, the space $\widetilde{\cH}$ corresponds to the harmonic functions on $\mR^{m-2|2n}$. This gives the harmonic polynomials on $\mR^{m-2|2n}$ and $\osp$-module structure. This is expressed explicitly in the following theorem.
\begin{theorem}
\label{calccDX}
The harmonic polynomials on $\mR^{m-2|2n}$, $\cH=\bigoplus_{k=0}^{\infty}\cH_k$, form a representation of $\osp$, with action given by
\begin{enumerate}
\item
$
\cD_A= \sum_{i,j=1}^{m-2+2n}A^{ij}X_i\partial_{X^j}\,\mbox{for}\, A\in\mathfrak{osp}(m-2|2n)\hookrightarrow \osp ,
$
\item
$\partial_{X^j}$, $\quad X_j \left(2\mE+M-4\right)-R^2\partial_{X^j}$ for $j=1,\cdots,m-2+2n$,
\item $2\mE+M-4$.
\end{enumerate}
\end{theorem}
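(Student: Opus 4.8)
\section*{Proof proposal}

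The plan is to exploit the block (three--step graded) form of the ambient metric $g$ together with the fact that the intertwiner $\cD_X=\alpha\circ\mD_X\circ\beta$ of \eqref{cDX} transports the already-available ambient representation \eqref{relantis} of $\mg=\osp$ down to the base. Since $\osp$ commutes with $\widetilde\Delta,\widetilde R^2,\widetilde\mE$, every $\mD_V$ preserves the ideal $(\widetilde R^2)$, the homogeneity space $\cF_{2-M/2}$, and the harmonicity condition cutting out $\widetilde\cH$; hence each $\cD_V$ is a well-defined operator on harmonic functions on $\mR^{m-2|2n}$. Because $\alpha$ and $\beta$ are mutually inverse on these spaces by Lemma \ref{alphabeta}, one gets $\cD_U\cD_V=\alpha\,\mD_U\mD_V\,\beta$ at once, so that $\cD_{[U,V]}=\cD_U\cD_V-(-1)^{|U||V|}\cD_V\cD_U$ and the $\cD_X$ automatically form a representation. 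Thus it only remains to evaluate $\cD_V$ on a spanning set of $\mg$ and to verify that $\bigoplus_k\cH_k$ is preserved.

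First I would record the single computational input: the action of the ambient derivatives on the representative $\beta(F)(x_1,x_2,\mathbf{x})=x_1^{2-M/2}F(\mathbf{x}/x_1)$. As $\beta(F)$ is independent of $x_2$, one has $\partial_{x_2}\beta(F)=0$, while the chain rule gives $\partial_{X_j}\beta(F)=x_1^{1-M/2}(\partial_{X_j}F)(\mathbf{x}/x_1)$ and, after collecting the Euler field $\mE=\sum_j X_j\partial_{X_j}$, $\partial_{x_1}\beta(F)=x_1^{1-M/2}\big((2-\tfrac M2)F-\mE F\big)(\mathbf{x}/x_1)$. Applying $\alpha$, i.e.\ setting $x_1=1,\ x_2=-R^2/2$, converts these three formulas into expressions purely on $\mR^{m-2|2n}$, and this is the only place where the homogeneity degree $2-M/2$ enters.

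Next I would split $\mg$ according to how a generator $V$, written via $\mD_V=\sum_{a,c}V^{a}{}_c\,x_a\partial_{x_c}$, couples the two null directions $x_1,x_2$ to the transverse block $\{x_3,\dots,x_{m+2n}\}=\mathbf{x}$. The generators supported on the transverse block are exactly $\mathfrak{osp}(m-2|2n)$; there $x_1,x_2$ never occur, $\alpha$ and $\beta$ pass through trivially, and $\cD_A$ is the expression in item (1). The generators pairing $x_1$ with $\mathbf{x}$ produce, after the substitutions above, the pure derivatives $\partial_{X^j}$ of item (2); the generator pairing $x_1$ with $x_2$ (the ambient boost) collapses to homogeneity counting in $\mathbf{x}$ and yields a multiple of $2\mE+M-4$, giving item (3); and the generators pairing $x_2$ with $\mathbf{x}$ pick up the factor $x_2=-R^2/2$ from $\alpha$ together with the $\partial_{x_1}$-term, producing the combination $X_j(2\mE+M-4)-R^2\partial_{X^j}$ of item (2). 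Harmonicity is then preserved because $\widetilde\Delta=2\partial_{x_1}\partial_{x_2}+\Delta$ and $\partial_{x_2}\beta(F)=0$ force $\widetilde\Delta\beta(F)=x_1^{-M/2}(\Delta F)(\mathbf{x}/x_1)$, so $\Delta F=0\iff\widetilde\Delta\beta(F)=0$, while the three formulas visibly map $\bigoplus_k\cH_k$ to itself.

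The main obstacle is not conceptual but the super-index bookkeeping: raising and lowering with the super-symmetric metric $g$, the Koszul signs $(-1)^{[a][b]}$ generated when reordering odd variables inside $\mD_V$, and the correct sign $(-1)^{[j]}$ attached to odd transverse directions. One must fix the normalization constants (and the overall sign of the boost generator) consistently so that the dilation lands on precisely $2\mE+M-4$ rather than a scalar multiple, and must respect the antisymmetry $V^{ab}=-(-1)^{[a][b]}V^{ba}$ when selecting each generator; once these conventions are pinned down, the identifications with items (1)--(3) follow from the three derivative formulas by direct substitution.
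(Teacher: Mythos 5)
Your proposal is correct and follows essentially the same route as the paper's own proof: the paper also computes $\cD_X=\alpha\circ\mD_X\circ\beta$ directly on the block basis, matching item (1) to the transverse block, $\partial_{X^j}$ to $x_1\partial_{j+2}-x_{j+2}\partial_1$, the operator $X_j(2\mE+M-4)-R^2\partial_{X^j}$ to $2(x_2\partial_{j+2}-x_{j+2}\partial_2)$, and $2\mE+M-4$ to $2(x_2\partial_1-x_1\partial_2)$, with the representation property and preservation of harmonics coming from Lemma \ref{alphabeta} and the relation $\widetilde{\Delta}\beta(F)=0\iff\Delta F=0$ exactly as you argue. Your three derivative formulas on $\beta(F)$ (including $\partial_{x_2}\beta(F)=0$ and the Euler-field term in $\partial_{x_1}\beta(F)$) are precisely the computational content the paper leaves implicit, so nothing is missing.
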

\begin{proof}
This can be calculated directly from the equality $\cD_X=\alpha\circ\mD_X\circ\beta$ for $X\in\osp$. The first kind corresponds to $X=\left( \begin{array}{cc}0&0\\  \vspace{-3.5mm} \\0&A\\  \end{array} \right)\in\osp$ for $A\in\mathfrak{osp}(m-2|2n)$. The second and third kind correspond respectively to $x_1\partial_{j+2}-x_{j+2}\partial_1$ and $2(x_2\partial_{j+2}-x_{j+2}\partial_2)$. The last one corresponds to $2(x_2\partial_1-x_1\partial_2)$.
\end{proof}

\begin{remark}
\label{remarkrealso}
{\rm It is clear that the realization of $\mathfrak{so}(m)\hookrightarrow\mathfrak{osp}(m|2n)$ given in Theorem \ref{calccDX} does not correspond to the classical realization of $\mathfrak{so}(m)$ as differential operators on $\mR^{m-2}$ preserving the kernel of the Laplace operator. This is the essential difference between the two Joseph-type ideals and their corresponding representations in the current paper. Because for the metaplectic representation of $\mathfrak{spo}(2n|m)$ in Section \ref{sposec}, the restriction to $\mathfrak{sp}(2n)$ gives the ordinary metaplectic representation. This is closely related to Remarks \ref{Cartandiff} and \ref{junction1}.}
\end{remark}

\begin{corollary}
\label{cDcD}
The differential operators $\cD_X$ on $C^\infty(\mR^{m-2|2n})$ for $X\in\osp$ defined in equation 
\eqref{cDX} and calculated explicitly in Theorem \ref{calccDX}, satisfy the relation
\begin{eqnarray*}
\cD_X\cD_Y&\equiv & \cD_{X\topa Y}+\frac{1}{2}\cD_{[X,Y]}+\lambda\cD_{\langle X,Y\rangle}\quad \mod \Delta
\end{eqnarray*}
for $\lambda=-\frac{M-4}{4(M-1)}$, with $X,Y\in\mathfrak{osp}(m|2n)$.
\end{corollary}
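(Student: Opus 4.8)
The plan is to reduce the composition $\cD_X\cD_Y$ on $C^\infty(\mR^{m-2|2n})$ to the single ambient operator $\cD_{X\otimes Y}$, where Lemma \ref{DUDV} has already done all the representation-theoretic work, and then to transport the relation ``$\bmod\,\widetilde{\Delta}$'' through the isomorphism $\alpha,\beta$ of Lemma \ref{alphabeta} into the relation ``$\bmod\,\Delta$'' on $\mR^{m-2|2n}$.

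First I would establish the collapse $\cD_X\cD_Y=\cD_{X\otimes Y}$. By definition
\[\cD_X\cD_Y=(\alpha\circ\mD_X\circ\beta)\circ(\alpha\circ\mD_Y\circ\beta)=\alpha\circ\mD_X\circ(\beta\circ\alpha)\circ\mD_Y\circ\beta.\]
The operator $\mD_Y$ preserves $\cF_{2-M/2}$: since the $\osp$-action commutes with the $\mathfrak{sl}(2)$-generators, $\mD_Y$ commutes with $\widetilde{\mE}$ and hence preserves the homogeneous degree, and it commutes with $\widetilde{R}^2$ so it preserves the ideal $(\widetilde{R}^2)$ and descends to the quotient. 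Therefore $\mD_Y\circ\beta$ already maps into $\cF_{2-M/2}$, on which $\beta\circ\alpha$ is the identity by Lemma \ref{alphabeta}, so the middle factor drops out and $\cD_X\cD_Y=\alpha\circ\mD_X\mD_Y\circ\beta=\alpha\circ\mD_{X\otimes Y}\circ\beta=\cD_{X\otimes Y}$.

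Next I would invoke the second (stronger) statement of Lemma \ref{DUDV}, which gives, as operators on all of $\cF_{2-M/2}$,
\[\mD_{X\otimes Y}=\mD_{X\topa Y}+\frac{1}{2}\mD_{[X,Y]}+\lambda\mD_{\langle X,Y\rangle}+P\circ\widetilde{\Delta}\]
for $\lambda=-\frac{M-4}{4(M-1)}$ and a suitable operator $P$ (tracking the proof of Lemma \ref{DUDV}, all $\widetilde{R}^2$-terms vanish in the quotient and the remaining discrepancy carries $\widetilde{\Delta}$ on the right, with $P$ of multiplication type raising the degree back by $2$). Applying $\alpha\circ(-)\circ\beta$ and using the collapse of the previous step turns the first three terms into $\cD_{X\topa Y}+\frac{1}{2}\cD_{[X,Y]}+\lambda\cD_{\langle X,Y\rangle}$, so it only remains to show that the remainder $\alpha\circ P\circ\widetilde{\Delta}\circ\beta$ has the form $Q\circ\Delta$ for a differential operator $Q$ on $\mR^{m-2|2n}$, which is exactly the meaning of ``$\equiv 0\bmod\Delta$''.

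The decisive step, and the one I expect to be the main obstacle, is the intertwining of the two Laplacians under $\beta$. A direct computation from $\beta(F)(x_1,x_2,\mathbf{x})=x_1^{2-M/2}F(\mathbf{x}/x_1)$ and $\widetilde{\Delta}=2\partial_{x_1}\partial_{x_2}+\Delta$ shows that the $2\partial_{x_1}\partial_{x_2}$ part annihilates $\beta(F)$ (which is independent of $x_2$) and that $\widetilde{\Delta}\beta(F)=x_1^{-M/2}(\Delta F)(\mathbf{x}/x_1)=\beta'(\Delta F)$, where $\beta'$ is the degree $-M/2$ analogue of $\beta$ into $\cF_{-M/2}$. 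Hence $\widetilde{\Delta}\circ\beta=\beta'\circ\Delta$, so $\alpha\circ P\circ\widetilde{\Delta}\circ\beta=(\alpha\circ P\circ\beta')\circ\Delta=:Q\circ\Delta$. This upgrades the kernel-level remark $\widetilde{\Delta}\beta(F)=0\Leftrightarrow\Delta F=0$ (noted before Theorem \ref{calccDX}) to the operator identity needed here, and completes the proof. Care is required only in checking that $\beta'$ lands in $\cF_{-M/2}$ and that $Q$ is a genuine differential operator, both of which are routine once the homogeneity is tracked.
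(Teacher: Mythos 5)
Your proposal is correct and follows the same route as the paper, whose entire proof reads ``This is an immediate consequence of Lemma \ref{DUDV} and equation \eqref{cDX}'': you simply make explicit the details the paper leaves implicit, namely the collapse $\cD_X\cD_Y=\alpha\circ\mD_X\mD_Y\circ\beta$ via $\beta\circ\alpha=\mathrm{id}$ on $\cF_{2-M/2}$ (using that $\mD_Y$ commutes with $\widetilde{\mE}$ and $\widetilde{R}^2$) and the intertwining $\widetilde{\Delta}\circ\beta=\beta'\circ\Delta$ that converts ``$\bmod\,\widetilde{\Delta}$'' into ``$\bmod\,\Delta$''. These verifications are accurate and complete the argument exactly as intended.
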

\begin{proof}
This is an immediate consequence of Lemma \ref{DUDV} and equation \eqref{cDX}.
\end{proof}

For the next theorem we will make the substitution $m=p+2$ in order to connect more easily with earlier results on harmonic analysis on superspace, as in \cite{OSpHarm, MR2344451}.
\begin{theorem}
Consider the $\mathfrak{osp}(p+2|2n)$-representation $\cH=\bigoplus_{k=0}^\infty \cH_k$ of harmonic polynomials on $\mR^{p|2n}$ from Theorem \ref{calccDX}. The space is an irreducible $\mathfrak{osp}(p+2|2n)$-representation if $p-2n\not\in 2-2\mN$. If $p-2n=2-2q$, the representation $\cH$ is indecomposable but has exactly one (irreducible) subrepresentation, which decomposes as an $\mathfrak{osp}(p|2n)$-representation into irreducible pieces as
\begin{eqnarray*}
\bigoplus_{j=0}^q \cH_j \oplus \bigoplus_{j=q+1}^{2q}R^{2j-2q}\cH_{2q-j}.
\end{eqnarray*}
This subrepresentation is isomorphic to $K^{p+2|2n}_{q\epsilon_1}=K^{p+2|2n}_{(1+n-p/2)\epsilon_1}$.
\end{theorem}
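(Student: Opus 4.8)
The plan is to use the $\mN$-grading $\cH=\bigoplus_{k\ge0}\cH_k$ by polynomial degree, together with the explicit generators of the $\osp$-action in Theorem~\ref{calccDX}, where now $\osp=\mathfrak{osp}(p+2|2n)$ and $M=p+2-2n$. The degree is preserved by the subalgebra $\mathfrak{osp}(p|2n)$ and by the grading operator $2\mE+M-4$, which acts on $\cH_k$ by the scalar $2k+M-4=2k+(p-2n)-2$; it is lowered by $\partial_{X^j}$ and raised by $X_j(2\mE+M-4)-R^2\partial_{X^j}$. Since the scalars $2k+M-4$ are pairwise distinct, my first step is that every $\osp$-submodule $W$ is graded, $W=\bigoplus_k(W\cap\cH_k)$, with each $W\cap\cH_k$ an $\mathfrak{osp}(p|2n)$-submodule of $\cH_k$; this reduces everything to how the raising and lowering families connect consecutive graded pieces, read off against the $\mathfrak{osp}(p|2n)$-structure of $\cH_k$ from Theorem~\ref{sphHarm}. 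I will use Schur's lemma in the form: a nonzero $\mathfrak{osp}(p|2n)$-equivariant family landing in an irreducible constituent is surjective onto it, and the fact that a nonconstant harmonic has a nonzero derivative, so $\sum_j\partial_{X^j}$ is never identically zero in positive degree.

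In the generic case $p-2n\notin2-2\mN$ one has $p-2n\notin-2\mN$, so by Theorem~\ref{sphHarm} every $\cH_k$ is irreducible and the raising scalar $2k+M-4$ never vanishes. Taking a nonzero submodule $W$, some $W\cap\cH_k$ is all of the irreducible $\cH_k$. Lowering propagates this downward: $\sum_j\partial_{X^j}(\cH_k)$ is a nonzero $\mathfrak{osp}(p|2n)$-submodule of the irreducible $\cH_{k-1}$, hence equal to it. Raising propagates upward: composing a raising operator with a lowering operator gives an $\mathfrak{osp}(p|2n)$-endomorphism of the irreducible $\cH_k$, hence a scalar, and a short computation with the $\mathfrak{sl}(2)$-relations among $\Delta,R^2,\mE$ shows this scalar is a nonzero multiple of $2k+M-4$; thus the raising family is nonzero on $\cH_k$ and surjects onto $\cH_{k+1}$. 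Therefore $W=\cH$ and $\cH$ is irreducible.

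For the degenerate case $p-2n=2-2q$ (so $M-4=-2q$ and $q=1+n-p/2$) the crucial observation is that $2k+M-4$ vanishes exactly at $k=q$, where the raising operator collapses to $-R^2\partial_{X^j}$. More generally, using $[\Delta,R^2]=4\mE+2(p-2n)$ one gets for $Q\in\cH_b$ that $\Delta(R^{2a}Q)=4a(a+b-q)R^{2a-2}Q$, so $R^{2a}\cH_b$ is harmonic precisely when $a+b=q$; this is exactly the index pattern of the submodules $R^{2j-2q}\cH_{2q-j}\subset\cH_j$ for $q+1\le j\le2q$ in Theorem~\ref{sphHarm}. Write $S:=\bigoplus_{j=0}^q\cH_j\oplus\bigoplus_{j=q+1}^{2q}R^{2j-2q}\cH_{2q-j}$ and let $S_j$ be its degree-$j$ part. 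With the above identity the connecting maps are transparent: on $S_i=R^{2i-2q}\cH_{2q-i}$ the raising operator reduces, after the cancellation of the $X_j$-terms, to $-R^{2(i+1)-2q}\partial_{X^j}P$, which lies in $S_{i+1}$; and a one-line Laplacian check shows $\partial_{X^j}$ sends $S_i$ into $S_{i-1}$. Hence $S$ is a finite-dimensional $\osp$-submodule. Starting from $\cH_0$, raising climbs surjectively through the irreducible pieces $\cH_0,\dots,\cH_q$, is forced at degree $q$ into $R^2\cH_{q-1}=S_{q+1}$, continues surjectively through the $S_j$, and finally annihilates $S_{2q}=\mC R^{2q}$ (a direct check using $\partial_{X^j}R^{2q}=2qR^{2q-2}X_j$), so $S$ is generated by $\cH_0$.

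To finish I would show every nonzero submodule contains the constants: descending by $\partial_{X^j}$ from a nonzero graded piece never reaches zero before degree $0$, so it contains $\cH_0$ and hence the submodule $S$ it generates. Consequently $S$ lies in every nonzero submodule, forcing $S$ to be the unique irreducible submodule and $\cH$ to be indecomposable (two complementary summands would share $S$). Finally $R^{2q}\in\cH_{2q}$ is $\mathfrak{osp}(p|2n)$-invariant and annihilated by all raising operators, hence is a highest weight vector; its $\mathfrak{osp}(p|2n)$-weight is $0$ and its $2\mE+M-4$-eigenvalue is $2q$, so relative to the extra orthogonal direction $\epsilon_1$ of $\mathfrak{osp}(p+2|2n)$ it has weight $q\epsilon_1$, giving $S\cong K^{p+2|2n}_{q\epsilon_1}$. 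The main obstacle is the degenerate case: correctly localizing the degeneration at $k=q$ and verifying that raising and lowering land in and surject onto the prescribed $\mathfrak{osp}(p|2n)$-constituents; this rests entirely on the identity $\Delta(R^{2a}Q)=4a(a+b-q)R^{2a-2}Q$, which is what makes $S$ close up into a finite-dimensional module rather than leaking into the higher harmonics.
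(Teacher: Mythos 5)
Your overall route---grading every submodule by the distinct eigenvalues of $2\mE+M-4$, tracking the raising scalar $2k+M-4$ and its vanishing at $k=q$, showing every nonzero submodule contains the constants and hence the module $S$ they generate, and verifying that $S$ closes up via the identity $\Delta(R^{2a}Q)=4a(a+b-q)R^{2a-2}Q$---is essentially the paper's proof; the paper computes directly on the highest weight vectors $X_1^k$ instead of arguing equivariantly with the whole raising/lowering families, and invokes $\cH_k\cap R^2\cP=R^{2k-2q}\cH_{2q-k}$ where you use your Laplacian identity (your identification of $S\cong K^{p+2|2n}_{q\epsilon_1}$ via the highest weight vector $R^{2q}$ is a nice explicit touch the paper leaves implicit). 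One step, however, is wrong as written. The composite you describe, an endomorphism of $\cH_k$ obtained by raising and then lowering, has scalar $(2k+M-4)(k+M-2)-2k$, which is \emph{not} a multiple of $2k+M-4$: for $p-2n=-3$ (admissible in your generic case, being odd) and $k=5$ it equals $(10-5)(5-3)-10=0$ although $2k+M-4=5\neq 0$, so nonvanishing of this composite cannot certify that the raising family is nonzero on $\cH_5$. The composite whose scalar has the form you quote is the other order, lowering then raising, acting on $\cH_{k+1}$ with scalar $(k+1)(2k+M-4)$; that version does the job (combined with your observation that lowering is nonzero in positive degree, so that $E\circ F\neq 0$ forces $E\neq 0$ on $\cH_k$), but your step needs this correction. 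The paper's direct evaluation $X_1^k\mapsto(2k+M-4)X_1^{k+1}$ sidesteps the issue entirely.

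More substantively, you prove that $S$ is contained in every nonzero submodule, hence is the unique irreducible submodule, and that $\cH$ is indecomposable---but the theorem asserts that $S$ is the \emph{only} proper nonzero subrepresentation, and you never exclude submodules strictly between $S$ and $\cH$. This requires an extra cascade that the paper supplies: if $W\supsetneq S$, then by your own gradedness argument $W\cap\cH_k\supsetneq S_k$ for some $k>q$; since by Theorem \ref{sphHarm} the only proper $\mathfrak{osp}(p|2n)$-submodule of $\cH_k$ is $R^{2k-2q}\cH_{2q-k}=S_k$ for $q<k\le 2q$ (and $\cH_k$ is irreducible for $k>2q$), $W$ must contain all of $\cH_k$, in particular $X_1^k$ with $k>q$; and each such $X_1^k$ generates the whole of $\cH$, because lowering reaches every $X_1^j$ below it, the raising scalars $2j-2q$ are nonzero for all $j>q$, and each $\cH_j$ is a highest weight module generated by $X_1^j$. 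This contradicts $W\neq\cH$, giving uniqueness. Your toolkit contains all the ingredients for this argument, but it is absent from the proposal, so the ``exactly one subrepresentation'' part of the statement is not established as it stands.
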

\begin{proof}
In this proof we choose the metric $g$ such that the equalities $X^1=X_2$ and $X^2=X_1$ hold.

First we consider the case $p-2n\not\in-2\mN$. Each space $\cH_k$ is an irreducible $\mathfrak{osp}(p|2n)$-module with highest weight vector $X_1^k$, see Theorem \ref{sphHarm}. It is clear that $\partial_{X_1}$ from Theorem \ref{calccDX} maps $X^k_1$ to $kX_1^{k-1}$ while 
\begin{eqnarray*}
\left(X_1\left(2\mE+p-2n-2\right)-R^2\partial_{X^1}\right)X_1^k&=&(2k+p-2n-2)X_1^{k+1}.
\end{eqnarray*}
So it is clear that if $p-2n\not\in2-2\mN$, $\cH$ is irreducible. If $p-2n=2$, the scalars form a submodule, which does not have a complement representation, while the quotient $\cH/\mC$ is an irreducible $\mathfrak{osp}(p+2|2n)$-representation.

Now we consider the case $p-2n=2-2q$ with $q\in\mN_+$. Theorem \ref{sphHarm} implies that $\cH_k$ is an irreducible $\mathfrak{osp}(p|2n)$-representation if $k\not\in[q+1,2q]$ with highest weight vector $X_1^k$, while if $k\in[q+1,2q]$, $\cH_k$ is an indecomposable highest weight module with highest weight vector $X_1^k$ and with one submodule $R^{2k-2q}\cH_{2q-k}$. 

The action of the elements $\partial_{X_1}$ and $X_1 \left(2\mE-2q\right)-R^2\partial_{X^1}$ shows that each vector $X_1^k$ with $k>q$ generates the entire representation $\cH$, so $\cH$ is an indecomposable $\osp$-representation. It also shows that an $\mathfrak{osp}(p+2|2n)$-submodule cannot contain $X_1^k$ for $k>q$. Since all partial derivatives $\partial_{X_j}$ are elements of $\mathfrak{osp}(p+2|2n)$ each submodule $\cU$ must contain the scalars. The elements $\quad X_1 \left(2\mE-2q\right)-R^2\partial_{X^1}$ then imply that $X_1^k\in\cU$ for $k\le q$. Then we can take the action of the element $\quad X_2 \left(2\mE-2q\right)-R^2\partial_{X^2}$ on $X_1^q$, which shows that also $R^{2j-2q}X_1^{2q-j}$ is inside $\cU$ for $j=q+1,2q$.

Summarizing, this yields that any possible submodule must be of the form
\begin{eqnarray*}
\cU&=&\bigoplus_{j=0}^q \cH_j \oplus \bigoplus_{j=q+1}^{2q}R^{2j-2q}\cH_{2q-j},
\end{eqnarray*}
as an $\mathfrak{osp}(p|2n)$-representation. To prove that this is an $\mathfrak{osp}(p+2|2n)$-representation we only need to show that it is preserved by the action of the operators $\partial_{X^j}$ and $\left(X_l(2\mE-2q)-R^2\partial_{X^l}\right)$. This corresponds to noting that
\begin{eqnarray*}
\left(X_l(2\mE-2q)-R^2\partial_{X^l}\right)R^{2j-2-2q}\cH_{2q-j+1}&\subset& R^{2j-2q}\cH_{2q-j}
\end{eqnarray*}
for $j=q+1,\cdots, 2q$ and $\left(X_l(2\mE-2q)-R^2\partial_{X^l}R^{2j-2-2q}\right)R^{2q}=0$ holds. This follows easily since the right-hand side is always an element of $R^2\cP$ and $\cH_k\cap R^2\cP=R^{2k-2q}\cH_{2q-k}$ for $q+1\le k\le 2q$ and zero otherwise, which is an immediate consequence of Theorem \ref{sphHarm}. The corresponding claim for $\partial_{X^j}$ follows similarly.
\end{proof}
Note that the identification of the submodule with $K^{p+2|2n}_{(1+n-p/2)\epsilon_1}$ yields a branching rule for $\mathfrak{osp}(p|2n)\hookrightarrow\mathfrak{osp}(p+2|2n)$, which follows also from applying Theorem 10 in \cite{OSpHarm} twice:
\begin{eqnarray*}
K^{p+2|2n}_{(1+n-p/2)\epsilon_1}&\cong&K^{p+1|2n}_{(1+n-p/2)\epsilon_1}\,\oplus\, K^{p+1|2n}_{(n-p/2)\epsilon_1}\\
&\cong&\bigoplus_{l=0}^{1+n-p/2}K^{p|2n}_{l\epsilon_1}\,\oplus\,\bigoplus_{j=0}^{n-p/2} K^{p|2n}_{j\epsilon_1}.
\end{eqnarray*}

This representation of $\mathfrak{osp}(m|2n)$ (with $m=p+2$) is not unitarizable, contrary to the classical case. This follows from the fact that, due to the structure of the roots, a faithful unitarizable representation of $\mathfrak{osp}(m|2n)$ remains unitarizable as an $\mathfrak{osp}(m-2|2n)$-representation. The module $\cH$ can not be unitarizable for $\mathfrak{osp}(m-2|2n)$ since it decomposes into finite dimensional representations, which are never unitarizable for orthosymplectic Lie superalgebras different from $\mathfrak{osp}(2|2n)$, see \cite{MR0424886}. If $m-2n\in 4-2\mN$, the non-unitarizability follows immediately from the non-completely reducibility. This non-unitarizability is also related to the fact that the representation corresponds to a representation of the real Lie superalgebra $\mathfrak{osp}(p+1,1|2n;\mR)$, which has no unitary representations, see Theorem 6.3.1 in \cite{Salmasian}.

Now we can prove that the Joseph ideal from Subsection \ref{subsecosp} is related to the annihilator ideal of this representation on the kernel of the super Laplace operator.

\begin{theorem}
\label{idealrep}
If  for $\mg=\osp$, $M=m-2n\not\in\{1,2\}$ holds, the annihilator ideal in the universal enveloping algebra $\cU(\mathfrak{g})$ of the representation of $\mathfrak{osp}(m|2n)$ on harmonic polynomials on $\mR^{m-2|2n}$, or its irreducible quotient space contains the ideal $\mathfrak{J}^1$. In particular this implies that $\cU(\mg)/\mathfrak{J}^1$ is infinite dimensional. If $m-2n>2$, the annihilator ideal is identical to $\mathfrak{J}^1$.
\end{theorem}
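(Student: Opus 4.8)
The plan is to establish the theorem in three stages: first the containment $\mathfrak{J}^1\subseteq\mathrm{Ann}(\cH)$, then the infinite-dimensionality of $\cU(\mg)/\mathfrak{J}^1$, and finally the reverse inclusion when $M>2$ via the characterization in Theorem \ref{Garresult}.

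For the containment, I would read off the generators of $\mathfrak{J}^1$ as the symmetric relations $XY+(-1)^{|X||Y|}YX-2X\topa Y-2\lambda\langle X,Y\rangle$ at the critical value $\lambda=-\frac{M-4}{4(M-1)}$, as recorded just after Theorem \ref{quadJ1}. The symmetric part of the proof of Lemma \ref{DUDV} gives, for all $X,Y\in\mg$, the identity $\mD_X\mD_Y+(-1)^{|X||Y|}\mD_Y\mD_X\cong 2\mD_{X\topa Y}+2\lambda\mD_{\langle X,Y\rangle}$ when evaluated on $\widetilde{\cH}$, which under the isomorphism of Lemma \ref{alphabeta} is precisely the space $\cH$ of harmonic polynomials (equivalently, this is Corollary \ref{cDcD} with the mod-$\Delta$ correction killed on harmonics). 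Hence each generator of $\mathfrak{J}^1$ acts as the zero operator on $\cH$, and since $\mathrm{Ann}(\cH)$ is a two-sided ideal this forces $\mathfrak{J}^1\subseteq\mathrm{Ann}(\cH)$. The same relation descends to any subquotient, so the containment also holds for the irreducible quotient in the indecomposable case.

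For the infinite-dimensionality, I would use the Cartan element realized in Theorem \ref{calccDX}(3): the operator $2\mE+M-4$ equals $\cD_H$ for a suitable $H\in\mg$, and it acts on $\cH_k$ by the scalar $2k+M-4$. Since $\cH_k\neq 0$ for infinitely many $k$, this operator has infinitely many distinct eigenvalues, so its image in $\cU(\mg)/\mathrm{Ann}(\cH)\hookrightarrow\mathrm{End}(\cH)$ satisfies no polynomial relation and the quotient is infinite-dimensional. Because $\mathfrak{J}^1\subseteq\mathrm{Ann}(\cH)$ yields a surjection $\cU(\mg)/\mathfrak{J}^1\twoheadrightarrow\cU(\mg)/\mathrm{Ann}(\cH)$, the algebra $\cU(\mg)/\mathfrak{J}^1$ is infinite-dimensional as well; this simultaneously supplies the third (previously deferred) case of Theorem \ref{1cases}.

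For the equality when $M>2$, I would apply Theorem \ref{Garresult} to $\mathfrak{K}=\mathrm{Ann}(\cH)$. Infinite codimension is the previous step, so it remains to verify $\left(gr(\mathrm{Ann}(\cH))\cap\odot^2\mg\right)\oplus\mg\topa\mg=\odot^2\mg$. The leading symbols $X\odot Y-X\topa Y$ of the generators from the first step already place the full complement $K^{m|2n}_{2\epsilon_1}\oplus K^{m|2n}_{\epsilon_1+\epsilon_2+\epsilon_3+\epsilon_4}\oplus K^{m|2n}_0$ of $\mg\topa\mg$ inside $gr(\mathrm{Ann}(\cH))\cap\odot^2\mg$. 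The delicate point, and the main obstacle, is to show that the irreducible Cartan piece $\mg\topa\mg$ is \emph{not} contained in the graded annihilator: if it were, then $gr(\mathrm{Ann}(\cH))$ would contain all of $\odot^2\mg$, hence, being an ideal of $\odot\mg$, all of $\bigoplus_{k\geq 2}\odot^k\mg$, forcing $\cU(\mg)/\mathrm{Ann}(\cH)$ to be spanned by degrees $0$ and $1$ and thus finite-dimensional, contradicting the second step. Since $\odot^2\mg$ is completely reducible for $M>2$ and $\mg\topa\mg$ is irreducible, the degree-two graded annihilator must equal exactly the complement; the hypothesis of Theorem \ref{Garresult} then holds and we conclude $\mathrm{Ann}(\cH)=\mathfrak{J}^1$.
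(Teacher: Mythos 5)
Your proposal is correct and follows essentially the same route as the paper: the containment $\mathfrak{J}^1\subseteq\mathrm{Ann}(\cH)$ comes from Corollary \ref{cDcD} (the quadratic relation killed on harmonics), and the equality for $m-2n>2$ from Theorem \ref{Garresult}. You additionally make explicit two steps the paper leaves implicit --- the infinite-dimensionality of $\cU(\mg)/\mathrm{Ann}(\cH)$ via the infinitely many eigenvalues $2k+M-4$ of the Cartan element of Theorem \ref{calccDX}(3), and the verification that $gr(\mathrm{Ann}(\cH))\cap\odot^2\mg$ is exactly the complement of $\mg\topa\mg$ --- and both of these arguments are sound.
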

\begin{proof}
Corollary \ref{cDcD} implies that the ideal in the universal enveloping algebra of $\osp$, corresponding to the representation $\cH$, contains $\mathfrak{J}^1$. The fact that the annihilator ideal is not bigger than $\mathfrak{J}^1$ in case $m-2n>2$, then follows from Theorem \ref{Garresult}.
\end{proof}

\begin{remark}
{\rm In case equation \eqref{herformCartan} could be proven for $\mg=\osp$ and $V$ the adjoint representation for $m-2n<0$, the equality of the annihilator ideal and Joseph-like ideal would follow from Lemma \ref{Lemequiv} for those cases as well.}
\end{remark}

\section{The case of $\mathfrak{spo}(2n|m)$}
\label{sposec}

In this section we use the second notion of the Cartan product in the second tensor power of $\mg=\mathfrak{spo}(2n|m)$ to construct a second 1-parameter family of ideals in $\cU(\mg)$. Again, only for one value of the parameter the ideal has infinite codimension. In this case the ideal generalizes the Joseph ideal of $\mathfrak{sp}(2n)$. We also show that this ideal is the annihilator ideal of a generalization of the minimal representation of $\mathfrak{sp}(2n)$ to $\mathfrak{spo}(2n|m)$ studied in e.g. \cite{Tensor}.

\subsection{The Joseph-like ideal for $\mathfrak{spo}(2n|m)$}

\begin{lemma}
\label{dimhom2}
For $\mathfrak{g}=\mathfrak{spo}(2n|m)$, let $\Phi$ denote the composition
\begin{eqnarray*}
\mathfrak{g}\wedge \mathfrak{g}\otimes\mathfrak{g}\hookrightarrow \mathfrak{g}\otimes \mg\otimes \mg \rightarrow \mg\otimes \mg\topb\mg.
\end{eqnarray*}
Then,
\begin{eqnarray*}
\dim \Hom_\mg(\mg, \ker \Phi)&\ge&1.
\end{eqnarray*}
\end{lemma}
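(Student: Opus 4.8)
The plan is to imitate the explicit construction in the proof of Lemma \ref{dimhom1}, producing a nonzero $\mg$-module morphism $T\mapsto S$ from $\mg$ into $\mg\wedge\mg\otimes\mg$ whose image lies in $\ker\Phi$; a nonzero such morphism immediately gives $\dim\Hom_\mg(\mg,\ker\Phi)\ge 1$. The only structural change compared to Lemma \ref{dimhom1} is that the obstruction to be removed from the last two slots $cdef$ is now the second Cartan product $\mg\topb\mg\cong K^{m|2n}_{\epsilon_1+\epsilon_2+\epsilon_3+\epsilon_4}$ rather than $\mg\topa\mg$. Because $\mg\topb\mg$ is well defined for every value of $M=m-2n$ (as recorded at the end of Section \ref{sectenpow}), I expect the construction and the statement to hold without the restriction $M\notin\{1,2\}$ that was needed in Lemma \ref{dimhom1}, which matches the fact that no restriction on $M$ appears in the statement.

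Concretely, I would again start from $T\in\mg$ and assemble a tensor $U^{abcdef}$ out of $T$ and the metric $g$, using double-trace terms together with the embedding $\phi$ of Lemma \ref{embeddingCC} of $\mC^{m|2n}\otimes\mC^{m|2n}$ into $\mg\otimes\mg$, so that the $cdef$-block of $U$ lands in the trace-type summands $K^{m|2n}_{2\epsilon_1}\oplus K^{m|2n}_0$ and hence carries no $\topb$-component. One then super-antisymmetrizes the first two $\mg$-slots, $S^{abcdef}=U^{abcdef}-(-1)^{([a]+[b])([c]+[d])}U^{cdabef}$, which forces the $abcd$-block into $\mg\wedge\mg$ so that $S\in\mg\wedge\mg\otimes\mg$. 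It then remains to show that the projection of $S$ onto $\mg\otimes(\mg\topb\mg)$ vanishes. Since $\mg\topb\mg$ sits inside the super-symmetric part and is precisely the image of the projector $q$ of equation \eqref{FormCartan2} (Remark \ref{qsymm}), the vanishing reduces to checking that $q$ annihilates the $cd\leftrightarrow ef$ super-symmetric part of the $cdef$-block of $S$, i.e. that this part stays within $K^{m|2n}_{2\epsilon_1}\oplus K^{m|2n}_0$ together with $\mg\topa\mg$, which together make up $\ker q$.

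The hard part is precisely this last verification, and it is where the argument genuinely differs from Lemma \ref{dimhom1}. There, the swap term contributed, after super-symmetrization in $cd\leftrightarrow ef$, a nonzero piece lying in $K^{m|2n}_{\epsilon_1+\epsilon_2+\epsilon_3+\epsilon_4}=\mg\topb\mg$; that piece was harmless for a $\Phi$ built from $\topa$ but is exactly what must now be eliminated. I therefore expect to have to adjust the seed $U$ by correction terms (still built only from $T$ and $g$) so that the super-antisymmetrization routes the resulting traceless Cartan content into $\mg\topa\mg$, a summand of $\ker q$, instead of into $\mg\topb\mg$, and then to carry out the explicit contraction-and-projection bookkeeping, uniformly in $M$, confirming that $q$ of the super-symmetric $cdef$-part is zero. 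The equivariance of $T\mapsto S$ requires no separate argument: every operation used---multiplication by the invariant $g$, permutation of indices with the appropriate signs, the embedding $\phi$, and the two (anti)symmetrizations---is a morphism of $\mg$-modules, exactly as in Lemma \ref{dimhom1}.
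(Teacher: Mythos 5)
Your plan follows the paper's strategy exactly---an explicit special tensor $S=U-\mathrm{swap}(U)$ built from $T\in\mg$ and the metric, with the verification reduced, via Remark \ref{qsymm}, to showing that the projector $q$ annihilates the $cd\leftrightarrow ef$ super-symmetric part of the $cdef$-block---and your structural observations are accurate: the statement indeed holds with no restriction on $M$, the seed from Lemma \ref{dimhom1} cannot be reused because its swap term lands precisely in $K^{m|2n}_{\epsilon_1+\epsilon_2+\epsilon_3+\epsilon_4}=\mg\topb\mg$, and $\ker q$ is indeed $\mg\topa\mg\oplus K^{m|2n}_{2\epsilon_1}\oplus K^{m|2n}_0$ in the generic case. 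However, the proposal stops exactly where the lemma's content begins. The existence of a corrected seed $U$ such that the antisymmetrization routes everything away from $\mg\topb\mg$ is not automatic---a priori $\Hom_\mg(\mg,\ker\Phi)$ could vanish, and you offer neither the explicit tensor nor any abstract argument (e.g.\ a multiplicity count of $\Hom_\mg(\mg,\mg\wedge\mg\otimes\mg)$ against $\Hom_\mg(\mg,\mg\otimes(\mg\topb\mg))$) guaranteeing that suitable correction terms exist. Saying ``I expect to have to adjust the seed by correction terms \ldots and then carry out the bookkeeping'' defers the entire burden of proof to a computation that is never performed.

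For comparison, the paper resolves this by writing down a concrete ten-term tensor: a total-trace part $-4g^{de}g^{cf}T^{ab}+4(-1)^{[c][d]}g^{ce}g^{df}T^{ab}$ (note the doubled coefficient relative to Lemma \ref{dimhom1}) plus eight terms of the schematic form $g^{\cdot b}g^{\cdot\cdot}T^{a\cdot}$ and $g^{\cdot a}g^{\cdot\cdot}T^{b\cdot}$, arranged so that the $cdef$-part of $U$ lies in $K^{m|2n}_0\oplus K^{m|2n}_{2\epsilon_1}\oplus K^{m|2n}_{\epsilon_1+\epsilon_2}$ (so, slightly more generally than your $K^{m|2n}_{2\epsilon_1}\oplus K^{m|2n}_0$, it also admits an adjoint component, which is equally harmless for $\Phi$). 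The decisive check is then that the super-symmetrized $cdef$-part of the swap term has exactly the shape $\tfrac{2}{3}V^{cdef}+\tfrac{1}{3}(-1)^{[d][e]}V^{cedf}-\tfrac{1}{3}(-1)^{[c]([d]+[e])}V^{decf}$ for some $V\in\mg\odot\mg$, i.e.\ it lies in the image of $\mathrm{id}-q$, uniformly in $M$. Until you exhibit your corrected seed and carry out this verification (or replace it by an existence argument), the proposal is an outline of the paper's proof rather than a proof: the gap is precisely the construction that makes the lemma true.
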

\begin{proof}
First we construct, from $T\in\mg$, the tensor $U^{abcdef}$
\begin{eqnarray*}
&=&-4g^{de}g^{cf}T^{ab}+4(-1)^{[c][d]}g^{ce}g^{df}T^{ab}-g^{cb}g^{de}T^{af}+(-1)^{[c][d]}g^{db}g^{ce}T^{af}+(-1)^{[e][f]}g^{cb}g^{df}T^{ae}\\
&&-(-1)^{[c][d]+[e][f]}g^{db}g^{cf}T^{ae}+(-1)^{[a][b]}g^{ca}g^{de}T^{bf}-(-1)^{[a][b]+[c][d]}g^{da}g^{ce}T^{bf}\\
&&-(-1)^{[a][b]+[e][f]}g^{ca}g^{df}T^{be}+(-1)^{[a][b]+[c][d]+[e][f]}g^{da}g^{cf}T^{be}
\end{eqnarray*}
of which the $cdef$-part is inside $K_0^{m|2n}\oplus K^{m|2n}_{2\epsilon_1}\oplus K^{m|2n}_{\epsilon_1+\epsilon_2}$. Then we define the tensor 
\[S^{abcdef}=U^{abcdef}-(-1)^{([a]+[b])([c]+[d])}U^{cdabef},\]
the second term is given explicitly by $-(-1)^{([a]+[b])([c]+[d])}U^{cdabef}$
\begin{eqnarray*}
&=&4(-1)^{([a]+[b])([c]+[d])}g^{be}g^{af}T^{cd}-4(-1)^{([a]+[b])([c]+[d])+[a][b]}g^{ae}g^{bf}T^{cd}\\
&&+(-1)^{[b]([c]+[d])+[a][c]}g^{da}g^{be}T^{cf}-(-1)^{([a]+[b])[c]}g^{db}g^{ae}T^{cf}-(-1)^{[b]([c]+[d])+[a][c]+[e][f]}g^{da}g^{bf}T^{ce}\\
&&+(-1)^{[a][c]+[b][c]+[e][f]}g^{db}g^{af}T^{ce}-(-1)^{[b]([c]+[d])}g^{ca}g^{be}T^{df}+(-1)^{[b][d]}g^{cb}g^{ae}T^{df}\\
&&+(-1)^{[b]([c]+[d])+[e][f]}g^{ca}g^{bf}T^{de}-(-1)^{[a][d]+[e][f]}g^{cb}g^{af}T^{de}.
\end{eqnarray*}
It can then be checked that the $cdef$-part of the tensor, after being super symmetrized in $cd\leftrightarrow ef$, is of the form
\begin{eqnarray*}
\frac{2}{3}V^{cdef}+\frac{1}{3}(-1)^{[d][e]}V^{cedf}-\frac{1}{3}(-1)^{[c]([d]+[e])}V^{decf},
\end{eqnarray*}
for $V\in\mg\odot\mg$, which according to Remark \ref{qsymm} corresponds to the projection of $\mg\odot\mg$ onto everything except $\mg\topb\mg$. This shows that $S$ is of the required form.
\end{proof}

This tensor can now be used to prove the following theorem. Since the proof is, like the proof of Theorem \ref{quadJ1}, a generalization of methods in \cite{MR2369839} using the results obtained in the current paper in Section \ref{sectenpow}, we do not give it explicitly.

\begin{theorem}
\label{quadJ2}
Consider $\mathfrak{g}=\mathfrak{spo}(2n|m)$. If $\mu\not=\frac{1}{4}$, the two-sided ideal $\cJ^2_\mu$ in $\otimes\mathfrak{g}$ generated by
\begin{eqnarray*}
X\otimes Y-X\topb Y-\frac{1}{2}[X,Y]-\mu\langle X,Y \rangle
\end{eqnarray*}
for all $X,Y\in\mg$, contains $\mathfrak{g}\subset\otimes \mathfrak{g}$.
\end{theorem}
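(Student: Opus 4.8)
The plan is to imitate the proof of Theorem \ref{quadJ1} line by line, replacing the Cartan product $\topa$ by $\topb$ and the parameter $\lambda$ by $\mu$. I would start from the tensor $S\in\mg\wedge\mg\otimes\mg$ produced in Lemma \ref{dimhom2}, whose $cdef$-block lies in $K_0^{m|2n}\oplus K^{m|2n}_{2\epsilon_1}\oplus K^{m|2n}_{\epsilon_1+\epsilon_2}$ and therefore carries no $\mg\topb\mg$-component. The idea is to reduce $S$ inside $\otimes\mg/\cJ^2_\mu$ in two incompatible ways and compare the outcomes. Throughout I use that the defining relation of $\cJ^2_\mu$ splits into a skew part $X\otimes Y-(-1)^{|X||Y|}Y\otimes X\cong[X,Y]$ and a symmetric part $X\odot Y\cong X\topb Y+\mu\langle X,Y\rangle$, so that, modulo $\cJ^2_\mu$, any $W\in\mg\otimes\mg$ reduces to $\Pi_2(W)+\tfrac12\,(\text{bracket of }W)+\mu\,(\text{trace of }W)$, the $K_{2\epsilon_1}$-component being absorbed into the ideal.

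For the first reduction, acting on the $cdef$-slots: since that block has no $\topb$-piece, the relation gives
\[S^{abcdef}\cong \tfrac12\bigl(S^{abc}{}_d{}^{df}-(-1)^{[c][f]}S^{abf}{}_d{}^{dc}\bigr)+\mu\,S^{ab}{}_{cd}{}^{dc}\]
inside $\otimes\mg/\cJ^2_\mu$. A direct contraction of the ten-term expression for $S$ from Lemma \ref{dimhom2} then evaluates $S^{abc}{}_d{}^{df}$ as a combination of metric terms $g\cdot T$ and $S^{ab}{}_{cd}{}^{dc}$ as a multiple of $T^{ab}$. As in the orthosymplectic case, the metric combination is recognised as the image under the embedding $\mg\cong\mC^{m|2n}\wedge\mC^{m|2n}\hookrightarrow\mg\wedge\mg$ of Lemma \ref{embeddingCC}; modulo $\cJ^2_\mu$ such an embedded tensor $Y^{abcf}$ is equivalent to its contraction $Y^a{}_b{}^{bf}$. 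Substituting yields $S^{abcdef}\cong c_1(M,\mu)\,T^{ab}$ for an explicit coefficient $c_1$ that is affine in $\mu$.

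For the second reduction, acting on the $abcd$-slots: because $S\in\mg\wedge\mg\otimes\mg$, its $abcd$-block has neither a $\topb$-component nor a $K_0^{m|2n}$-component, both of which live in $\mg\odot\mg$ by Theorem \ref{decomposition}, so the relation applied on $abcd$ leaves only the bracket term. Contracting $S^a{}_b{}^{bdef}$ gives once more the image of the embedding $\mg\hookrightarrow\mg\wedge\mg$ of Lemma \ref{embeddingCC}; this forces $S_{ab}{}^{baef}=0$ and, after replacing the embedded tensor by its trace, $S^{abcdef}\cong c_2(M)\,T^{ab}$. Equating the two reductions gives $(c_1(M,\mu)-c_2(M))\,T^{ab}\cong0$ in $\otimes\mg/\cJ^2_\mu$, where $c_1-c_2$ is a nonzero affine function of $\mu$ whose $M$-dependent factors cancel so that it vanishes precisely at $\mu=\tfrac14$. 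Hence for $\mu\neq\tfrac14$ one concludes $T^{ab}\cong0$, that is $\mg\subset\cJ^2_\mu$.

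The main obstacle is not conceptual but computational: one must carry out the two partial contractions of the ten-term tensor $S$ while correctly tracking all the Koszul signs $(-1)^{[\cdots]}$, and then verify that each contraction genuinely lands in the image of the embedding of Lemma \ref{embeddingCC}, which is exactly what licenses the trace-replacement step modulo the ideal. A secondary, but crucial, point is to confirm that the $M$-dependence of $c_1-c_2$ cancels so that the critical value is the \emph{$M$-independent} number $\tfrac14$, in contrast to the $M$-dependent critical value of Theorem \ref{quadJ1}; this cancellation reflects the fact that $\topb$, unlike $\topa$, is insensitive to the trace correction by $g^{ab}$ and behaves like the symplectic Cartan product, as recorded in Remark \ref{Cartandiff}.
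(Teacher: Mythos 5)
Your proposal is correct and takes essentially the same route as the paper: the paper's own proof of Theorem \ref{quadJ2} consists precisely of the remark that one reduces the tensor $S$ of Lemma \ref{dimhom2} in two different ways modulo $\cJ^2_\mu$, mimicking the proof of Theorem \ref{quadJ1} (and Theorem 3.1 of \cite{MR2369839}), which is exactly your plan. Your supporting observations — that the $cdef$-block of $S$ has no $\mg\topb\mg$-part, that the $abcd$-block, being in $\mg\wedge\mg$, carries neither a $\topb$- nor a $K_0^{m|2n}$-component so only the bracket survives there, and that both contractions land in the embedded copy of $\mg$ from Lemma \ref{embeddingCC}, licensing the trace-replacement — are all sound, and the comparison of the two reductions indeed yields the $M$-independent critical value $\mu=\frac{1}{4}$.
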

\begin{proof}
This is proven similarly to Theorem $3.1$ in \cite{MR2369839} by reducing the tensor $S$ from Lemma \ref{dimhom2} in two different ways.
\end{proof}

Again we can consider the associated ideal in $\cU(\mg)$, $\mathfrak{J}^2_\mu$, which is a generalization of the Joseph ideal for $\mathfrak{sp}(2n)$ for the critical value.

\begin{theorem}
\label{cases2}
For $\mg=\mathfrak{spo}(2n|m)$, the ideal $\mathfrak{J}^2_\mu$
\begin{itemize}
\item is equal to $\cU(\mathfrak{g})$ if $\mu\not=1/4$ and $\mu\not=0$
\item is equal to $\cU_+(\mathfrak{g})=\mg\cU(\mg)$ if $\mu=0$
\item has infinite codimension if $\mu=1/4$.
\end{itemize}
\end{theorem}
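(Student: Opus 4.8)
The plan is to reproduce, in the $\mathfrak{spo}(2n|m)$ setting, exactly the two-step argument that established Theorem \ref{1cases}: the two finite-codimension assertions are purely ideal-theoretic consequences of the reduction already carried out in Theorem \ref{quadJ2}, whereas the infinite codimension at the critical value $\mu=1/4$ cannot be seen from that tensor computation and must instead be deduced from the existence of an infinite-dimensional module annihilated by the ideal. So the proof splits into a formal part (first two bullets) and a representation-theoretic part (third bullet).

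For $\mu\not=1/4$ I would start from Theorem \ref{quadJ2}, which gives $\mg\subset\cJ^2_\mu$. Since $\cJ^2_\mu$ is a two-sided ideal of $\otimes\mg$ containing the degree-one subspace $\mg$, it contains $\mg\cdot\otimes\mg=\oplus_{k>0}\otimes^k\mg$, i.e.\ the whole augmentation ideal. Hence $\cJ^2_\mu$ is either all of $\otimes\mg$ or precisely this augmentation ideal, the only remaining freedom being whether the scalars $\mC$ lie in $\cJ^2_\mu$. Reading off the degree-zero component of the generating relation, that scalar contribution is $-\mu\langle X,Y\rangle$, and since the bilinear form of Lemma \ref{Killing} is non-degenerate one can pick $X,Y\in\mg$ with $\langle X,Y\rangle\not=0$; subtracting the degree $\geq 1$ parts (already in the ideal) then places a nonzero scalar in $\cJ^2_\mu$ exactly when $\mu\not=0$. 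Passing to the image ideal in $\cU(\mg)$ yields $\mathfrak{J}^2_\mu=\cU(\mg)$ for $\mu\notin\{0,1/4\}$ and $\mathfrak{J}^2_\mu=\cU_+(\mg)$ for $\mu=0$. I would emphasize that, in contrast with the orthosymplectic case where the critical value can coincide with $0$ when $M=4$, here the critical value $1/4$ is never $0$, so the three cases separate cleanly and no restriction on $m$ and $n$ is needed.

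For the critical value $\mu=1/4$ I would invoke the connection with the super metaplectic (spinor) representation of $\mathfrak{spo}(2n|m)$ from \cite{Tensor, MR1132090}, to be established in Theorem \ref{JosephKostant} of Subsection \ref{secrepspo}. There this representation is shown to be annihilated by $\mathfrak{J}^2_{1/4}$; as it is infinite-dimensional, $\cU(\mg)/\mathfrak{J}^2_{1/4}$ surjects onto an infinite-dimensional algebra of operators, so $\mathfrak{J}^2_{1/4}$ has infinite codimension in $\cU(\mg)$.

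The step I expect to be the genuine obstacle is precisely this last one. The reduction in Theorem \ref{quadJ2} only shows that $\mg$ enters the ideal for the non-critical parameters and is silent about finiteness of codimension at $\mu=1/4$; ruling out that the ideal collapses to finite codimension there requires an explicit infinite-dimensional module on which the quadratic relation at $\mu=1/4$ holds identically, exactly as the Segal--Shale--Weil representation does classically for $\mathfrak{sp}(2n)$. Constructing and identifying its super analogue, and checking that $\mathfrak{J}^2_{1/4}$ indeed annihilates it, is therefore the crux, and I would defer that verification to the representation-theoretic results of Subsection \ref{secrepspo}.
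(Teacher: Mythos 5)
Your proposal is correct and follows essentially the same route as the paper, whose proof of Theorem \ref{cases2} consists precisely of deducing the first two bullets as in Theorem \ref{1cases} (via Theorem \ref{quadJ2} and the scalar-term argument) and deferring the infinite codimension at $\mu=1/4$ to the annihilation of the infinite-dimensional super metaplectic module $\mS_{m|2n}$ in Theorem \ref{JosephKostant}. Your added observation that the critical value $1/4$ never coincides with $0$, so no analogue of the $M\not=4$ restriction is needed, correctly accounts for the only structural difference from Theorem \ref{1cases}.
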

\begin{proof}
The first two cases follow similarly as in Theorem \ref{1cases}, the last case is stated in the subsequent Theorem \ref{JosephKostant}.
\end{proof}
For the critical value $\mu=1/4$ the ideals will be denoted by $\cJ^2$ and $\mathfrak{J}^2$.

\begin{remark}
{\rm Contrary to the first kind of Joseph-like ideal, the ideal $\mathfrak{J}^2$ has the property that $\mathfrak{J}^2\,\cap\,\cU(\mathfrak{sp}(2n))$ is exactly equal to the Joseph ideal for $\mathfrak{sp}(2n)$. This follows immediately from Remark \ref{Cartandiff} and the quadratic relation in Theorem \ref{quadJ2}. This is intimately related to the fact that the $\mathfrak{spo}(2n|m)$-representation, of which this ideal is the annihilator ideal, is the tensor product of the minimal $\mathfrak{sp}(2n)$-representations with $\mathfrak{so}(m)$-representations, see the subsequent Theorem \ref{JosephKostant}.}
\end{remark}

Finally we can again prove that the result of \cite{Garfinkle} extends to the second notion of Joseph ideal in the current paper.
\begin{theorem}
\label{Garresult2}
Consider a two-sided ideal $\mathfrak{K}$ in $\cU(\mg)$ for $\mg=\mathfrak{spo}(2n|m)$. If $\mathfrak{K}$ has infinite codimension and the associated graded ideal $gr(\mathfrak{K})$ in $S(\mg)=\odot\mg$ satisfies
\begin{eqnarray*}
\left(gr(\mathfrak{K})\,\cap\, \odot^2\mg\right)\,\oplus\, \mg\topb\mg&=&\odot^2\mg,
\end{eqnarray*}
then $\mathfrak{K}$ is equal to the Joseph ideal $\mathfrak{J}^2$.
\end{theorem}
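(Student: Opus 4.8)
The plan is to transcribe the proof of Theorem \ref{Garresult} essentially verbatim, replacing the $\osp$-data by the $\mathfrak{spo}(2n|m)$-data ($\topb$ in place of $\topa$, Theorem \ref{quadJ2} in place of Theorem \ref{quadJ1}, and the $\mathfrak{spo}(2n|m)$-form of Theorem \ref{Cartanosp} in place of its $\osp$-form). The point worth stressing is that the ingredients which forced the restriction $m-2n>2$ there all admit \emph{unconditional} $\mathfrak{spo}(2n|m)$-analogues, so the present statement needs no hypothesis on $M=m-2n$.

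First I would lift $\mathfrak{K}$ to the two-sided ideal $\cK$ in $\otimes\mg$ defined as the kernel of the composition $\otimes\mg\to\cU(\mg)\to\cU(\mg)/\mathfrak{K}$, and set $\cK_k$ to be the image in $\otimes^k\mg$ of $\cK\cap(\oplus_{j=0}^k\otimes^j\mg)$. Each $\cK_k$ is a $\mg$-submodule, and $\mg\wedge\mg\subset\cK_2$ since $2X\wedge Y-[X,Y]$ lies in the kernel of the first projection. The embedded copy of $gr(\mathfrak{K})\cap\odot^2\mg$ together with the hypothesis gives $\cK_2\oplus\mg\topb\mg=\mg\otimes\mg$, hence $\cK_2=I_2$ with $I_2=(\mg\topb\mg)^\perp$ as in \eqref{defBk}. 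Two-sidedness yields $\cK_k\supset\cK_{k-1}\otimes\mg+\mg\otimes\cK_{k-1}$, so $\cK_k\supset I_k$ for all $k$ by \eqref{altdefIk}, with equality for the Joseph ideal $\cJ^2$.

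Next I would pin down the quadratic relation. The hypothesis forces, for each $X,Y\in\mg$, an element $XY+(-1)^{|X||Y|}YX-2\,X\topb Y+Z(X,Y)+c(X,Y)$ of $\mathfrak{K}\cap\cU_2(\mg)$, where $Z$ and $c$ extend to $\mg$-module morphisms from $\mg\odot\mg$ to $\mg$ and to $\mC$. Inspecting the supersymmetric square: by Theorem \ref{decomposition} when $M\notin\{0,1,2\}$ and by Theorem \ref{cases012} when $M\in\{0,1,2\}$, the adjoint module $K^{m|2n}_{\epsilon_1+\epsilon_2}$ never occurs in $\mg\odot\mg$ (it sits in $\mg\wedge\mg$) while the trivial module occurs with multiplicity one; this forces $Z=0$ and $c(X,Y)=\mu\langle X,Y\rangle$. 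Theorem \ref{quadJ2} then shows that any $\mu\ne\frac14$ would render $\mathfrak{K}$ of finite codimension, so $\mu=\frac14$, whence $\cK\supset\cJ^2$ and $\cK\cap(\oplus_{j=0}^2\otimes^j\mg)=\cJ^2\cap(\oplus_{j=0}^2\otimes^j\mg)$.

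Finally I would upgrade agreement in low degree to $\cK=\cJ^2$. If $\cK$ were strictly larger there is a least $k$ for which $\cK\cap(\oplus_{j=0}^k\otimes^j\mg)$ properly contains $\cJ^2\cap(\oplus_{j=0}^k\otimes^j\mg)$; subtracting matching leading terms shows $\cK_k\supsetneq I_k$. Since Theorem \ref{Cartanosp} establishes \eqref{herformCartan} for the $\mathfrak{spo}(2n|m)$ root system with no condition on $M$, Lemma \ref{Lemequiv} in the equivalence $(1)\leftrightarrow(3)$ applies and gives $v_+^{\otimes k}\in\cK_k$, so $\cK_k\supset I_k+\cU(\mg)\cdot v_+^{\otimes k}=\otimes^k\mg$ and therefore $\cK_j=\otimes^j\mg$ for all $j\ge k$, contradicting infinite codimension. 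The main obstacle is exactly this last comparison in the exceptional cases $M\in\{0,1,2\}$, where $\mg\otimes\mg$ is not completely reducible and one cannot appeal to semisimplicity; the argument goes through only because Theorem \ref{Cartanosp} guarantees that $\circledcirc^k\mg=\cU(\mg)\cdot v_+^{\otimes k}$ is the full complement of $I_k$ for the $\mathfrak{spo}(2n|m)$ root system regardless of $M$, which is precisely what removes the $m-2n>2$ restriction present in Theorem \ref{Garresult}.
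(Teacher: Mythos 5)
Your proposal is correct and follows exactly the route the paper intends: the paper's own proof of Theorem \ref{Garresult2} is the single line ``obtained identically as Theorem \ref{Garresult}'', and your transcription with $\topb$, Theorem \ref{quadJ2} and the $\mathfrak{spo}(2n|m)$-case of Theorem \ref{Cartanosp} is precisely that argument. Your added care for $M\in\{0,1,2\}$ (using Theorem \ref{cases012} for the vanishing of the morphisms $Z$ and $c$ up to the Killing form, and the unconditional $\mathfrak{spo}$-case of Theorem \ref{Cartanosp} in the final step) correctly fills in what the paper leaves implicit about why no restriction on $m-2n$ is needed.
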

\begin{proof}
This result is obtained identically as Theorem \ref{Garresult}.
\end{proof}

\subsection{The corresponding representation of $\mathfrak{spo}(2n|m)$}
\label{secrepspo}

For $\mathfrak{spo}(2n|m)$ we consider the representations studied in \cite{Tensor}, which are classified as the only completely pointed ones, see Theorem 6 in \cite{Tensor}. This corresponds to a generalization of the metaplectic or Segal-Shale-Weil representation, which is the minimal representation of $\mathfrak{sp}(2n)$, even though the motivation to study these representations in \cite{Tensor} came from a generalization of the spinor representation of $\mathfrak{so}(m)$. 

The minimal representation of $\mathfrak{sp}(2n)$ on $n$ commuting variables is extended to a representation of $\mathfrak{spo}(2n|m)$ by adding anticommuting variables on which $\mathfrak{so}(m)$ acts. Contrary to the representation we considered for $\mathfrak{osp}(m|2n)$, this representation is unitarizable (see e.g. \cite{ChLW, Tensor}), as in the classical case. We prove that the second Joseph-like ideal is the annihilator ideal of this representation. 
\begin{definition}
\label{superGrass}
The algebra $\Lambda_{d|n}$ is freely generated by $\{\theta_1,\cdots,\theta_d,t_1,\cdots, t_n\}$ subject to the relations
\[\theta_{j}\theta_{k}=-\theta_k\theta_j\quad\mbox{for}\quad  1\le j,k\le d,\qquad t_it_l=t_lt_i\quad\mbox{for}\quad  1\le i,l\le n\]
and
\[\theta_jt_i=-t_i\theta_j\quad\mbox{for}\quad  1\le j\le d,\quad 1\le i\le n.\]
\end{definition}
This algebra is a superalgebra with unusual gradation. The commuting variables are considered as odd and the Grassmann variables are even. With this gradation the algebra is in fact a super anti-commutative algebra, $ab=-(-1)^{|a||b|}ba$ for $a,b$ two homogeneous elements of the superalgebra. Therefore this corresponds to a supersymmetric version of a Grassmann algebra.

We introduce the short-hand notation of weights $\omega_{d}=\frac{1}{2}(\epsilon_1+\epsilon_2+\cdots+\epsilon_d)$, $\omega_{d-1}=\frac{1}{2}(\epsilon_1+\epsilon_2+\cdots+\epsilon_{d-1}-\epsilon_d)$, $\nu_{n-1}=\delta_1+\delta_2+\cdots+\delta_{n-1}$ and $\nu_{n}=\delta_1+\delta_2+\cdots+\delta_{n}$.

For $\mathfrak{spo}(2n|2d+1)$, the space $\Lambda_{d|n}$ is an irreducible highest weight representation $L^{2d+1|2n}_{\omega_d-\frac{1}{2}\nu_n}\cong K^{2d+1|2n}_{\omega_d-\frac{1}{2}\nu_n}$. For $\mathfrak{spo}(2n|2d)$, the space $\Lambda_{d|n}$ decomposes into two irreducible highest weight representations $L^{2d|2n}_{\omega_d-\frac{1}{2}\nu_n}\cong K^{2d|2n}_{\omega_d-\frac{1}{2}\nu_n}$ and $L^{2d|2n}_{\omega_{d-1}-\frac{1}{2}\nu_n}\cong K^{2d|2n}_{\omega_d+\nu_{n-1}-\frac{3}{2}\nu_n}$. For both cases the weight of an element of $\Lambda_{d|n}$
\begin{eqnarray}
\label{weightmonomials}
\theta_1^{\gamma_1}\theta_2^{\gamma_2}\cdots\theta_d^{\gamma_d}t_1^{\beta_1}t_2^{\beta_2}\cdots t_n^{\beta_n}\,\,\mbox{ is }\,\,\omega_d-\frac{1}{2}\nu_n-\sum_{j=1}^d\gamma_{d-j+1}\epsilon_j-\sum_{i=1}^n\beta_{n-i+1}\delta_i.
\end{eqnarray}

The generators $\theta_i$ and $t_j$ can be combined into one notation $T_k$ for $k=1,\cdots,d+n$. The operators $\partial_{\theta_i}$ are defined by $\partial_{\theta_i}\theta_l=\delta_{il}-\theta_l\partial_{\theta_i}$, $\partial_{\theta_i}t_j=-t_j\partial_{\theta_i}$ and $\partial_{\theta_i}(1)=0$. The operators $\partial_{t_j}$ are defined by $\partial_{t_j}\theta_l=-\theta_l\partial_{t_j}$, $\partial_{t_j}t_k=\delta_{jk}+t_k\partial_{t_j}$ and $\partial_{t_j}(1)=0$. These operators $\partial_{T_k}$ generate an algebra isomorphic to $\Lambda_{d|n}$. The algebra generated by $T_k$ and $\partial_{T_k}$ is denoted by Diff$(\Lambda_{d|n})$, or Diff$(\mS_{m|2n})$.

The realization of $\mathfrak{spo}(2n|2d)$ is generated by the operators $T_iT_j$ and $\partial_{T_i}\partial_{T_j}$ with gradation inherited from the superalgebra $\Lambda_{d|n}$. The realization of\\ $\mathfrak{spo}(2n|2d+1)$ is generated by the operators $T_i$ and $\partial_{T_i}$, again with gradation inherited from the superalgebra $\Lambda_{d|n}$.

The realization inside Diff$(\Lambda_{d|n})$ of $\mathfrak{spo}(2n|m)$ for $m=2d+1$ or $m=2d$ described above and in \cite{Tensor} is denoted by $D_V$ for $V\in\mg=\mathfrak{spo}(2n|m)$, which extends to $\otimes\mg$.
\begin{theorem}
The operators $D_X$ for $X\in\otimes\mg$, where $\mg=\mathfrak{spo}(2n|m)$, introduced above satisfy the relation
\begin{eqnarray*}
D_UD_V&=&D_{U\topb V}+\frac{1}{2}D_{[U,V]}+\mu D_{\langle U,V\rangle}
\end{eqnarray*}
for $\mu=\frac{1}{4}$ and with $U,V\in\mg$.
\end{theorem}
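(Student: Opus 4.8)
The plan is to follow the scheme of Lemma~\ref{DUDV}, but now the map $V\mapsto D_V$ realizes $\mg=\mathfrak{spo}(2n|m)$ by elements of the associative super Weyl--Clifford algebra $W=\mathrm{Diff}(\Lambda_{d|n})$, which acts faithfully on the Fock space $\Lambda_{d|n}$; consequently the asserted relation is an \emph{exact} operator identity in $W$ and no passage to a harmonic subspace is required (this is the structural difference with the $\osp$ case). First I would split the relation into its super anti-symmetric and super symmetric parts. The anti-symmetric part
\[
D_UD_V-(-1)^{|U||V|}D_VD_U=D_{[U,V]}
\]
is exactly the statement that $D$ is a representation of $\mg$, established in \cite{Tensor}. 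It therefore remains to prove the symmetric identity $D_UD_V+(-1)^{|U||V|}D_VD_U=2D_{U\topb V}+2\mu D_{\langle U,V\rangle}$ with $\mu=\tfrac14$, whose left-hand side is $D_X$ for $X=U\odot V$ running through $\mg\odot\mg$.

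Next I would use that $X\mapsto D_X$ is $\mg$-equivariant, together with the decomposition $\mg\odot\mg\cong (\mg\topb\mg)\oplus K^{m|2n}_{2\epsilon_1+2\epsilon_2}\oplus K^{m|2n}_{2\epsilon_1}\oplus K^{m|2n}_0$ from Theorem~\ref{decomposition}, and analyse each summand by operator order. Filtering $W$ by order gives $\mg\subset F_2W$ and $\mathrm{gr}_kW\cong\odot^k\mC^{2n|m}$, so that $\mathrm{gr}_0W=K^{m|2n}_0$, $\mathrm{gr}_2W\cong\mg=K^{m|2n}_{\epsilon_1+\epsilon_2}$ (and $\mathrm{gr}_1W=K^{m|2n}_{\epsilon_1}$ in the odd case $m=2d+1$). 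The order-four symbol of $D_X$ is the image of $X$ under the super symmetrization $\mg\odot\mg\cong\odot^2(\odot^2\mC^{2n|m})\to\odot^4\mC^{2n|m}$; by the description of the second Cartan product as $L^{m|2n}_{4\delta_1}\subset\odot^4\mC^{2n|m}$ (Remark~\ref{qsymm}) this map projects isomorphically onto $\mg\topb\mg$ and annihilates the other three summands, the trivial piece $K^{m|2n}_0$ dropping out because the invariant form becomes a perfect square in the commutative symbol algebra. Hence $D_X$ is of pure order four and equals $D_{U\topb V}$ when $X\in\mg\topb\mg$ (the would-be lower-order corrections are morphisms $\mg\topb\mg\to\mg$ and $\mg\topb\mg\to\mC$, which vanish by Theorem~\ref{decomposition}), whereas $D_X\in F_2W$ when $X\in K^{m|2n}_{2\epsilon_1+2\epsilon_2}\oplus K^{m|2n}_{2\epsilon_1}\oplus K^{m|2n}_0$. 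Since neither $K^{m|2n}_{2\epsilon_1+2\epsilon_2}$ nor $K^{m|2n}_{2\epsilon_1}$ is isomorphic to any constituent of $\mathrm{gr}_{\le2}W$, Schur's lemma forces $D_X=0$ on those two pieces, while $D_X$ must be a scalar on $K^{m|2n}_0$.

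Finally I would determine $\mu$ by computing that scalar. For $X$ the total trace tensor \eqref{tracepart}, $D_X$ is the image of the quadratic Casimir, which acts on the highest weight vector $1\in\Lambda_{d|n}$ (of weight $\omega_d-\tfrac12\nu_n$) by a constant; a direct evaluation, entirely parallel to the trace computation in Lemma~\ref{DUDV}, should give $D_X=\tfrac{M(M-1)}8$, and comparison with $\sum_{ab}X_{ab}{}^{ba}=M(M-1)/2$ from \eqref{tracepphi} then yields $\mu=\tfrac14$, independently of $m$ and $n$ and consistent with the classical $\mathfrak{sp}(2n)$ value.

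The hard part will be twofold. First, the Schur step presupposes complete reducibility of $\mg\odot\mg$, which fails precisely for $M=m-2n\in\{0,1,2\}$ (Theorem~\ref{cases012}); for those values I would replace the isotypic argument by the explicit tensorial computation of Lemma~\ref{DUDV}, inserting the embedding $\phi$ of Lemma~\ref{embeddingCC} for the $K^{m|2n}_{2\epsilon_1}$-piece and the projector $q$ of \eqref{FormCartan2} for the $K^{m|2n}_{2\epsilon_1+2\epsilon_2}$-piece, and using the super (anti)symmetry $T_aT_c=(-1)^{[a][c]}T_cT_a$ of the oscillator operators against the index symmetry of $X^{abcd}$ to force the vanishing directly in $W$. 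Second, in the odd case $m=2d+1$ the presence of order-one generators means the filtration is not concentrated in even orders, so the symbol bookkeeping must be carried out with the full filtration $F_\bullet W$ rather than a grading; here I would check most carefully, after reducing to highest weight vectors by equivariance, that no spurious order-one or order-three contributions survive in the symmetric product. The Casimir evaluation itself is routine once the highest weight is known.
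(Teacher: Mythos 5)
Your overall skeleton (split off the antisymmetric part via the representation property, use equivariance of $U\otimes V\mapsto D_UD_V$ against the decomposition of $\mg\odot\mg$, kill the components $K^{m|2n}_{2\epsilon_1+2\epsilon_2}$ and $K^{m|2n}_{2\epsilon_1}$, and fix the scalar on $K^{m|2n}_0$) is the right one, and for $m=2d$ your symbol-plus-Schur mechanism is a legitimate alternative to the paper's argument. But there is a genuine gap in the odd case $m=2d+1$, and your proposed remedy does not repair it. With the unusual grading of $\Lambda_{d|n}$ ($t$'s odd, $\theta$'s even), the supercommutator of two linear oscillators is generically \emph{quadratic}: $[t_i,t_j]=t_it_j+t_jt_i=2t_it_j$ and $[t_i,\theta_j]=2t_i\theta_j$ — this is exactly how the linear generators $T_i,\partial_{T_i}$ generate the even part of $\mathfrak{spo}(2n|2d+1)$. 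Consequently the adjoint action of the linear elements of $\mg$ can \emph{raise} oscillator order (e.g. $[t_i,\partial_{t_j}^{3}]=2t_i\partial_{t_j}^{3}$ for $i\neq j$ is of order four), so the order filtration $F_\bullet W$ is not $\mg$-invariant, $\mathrm{gr}_kW$ carries no $\mg$-module structure, and the Schur step collapses; "carrying out the bookkeeping with the full filtration" cannot save an argument whose underlying filtration is not preserved. (Your identification $\mathrm{gr}_kW\cong\odot^k\mC^{2n|m}$ is also wrong for odd $m$: the symbol space is built on $2n+2d$ oscillators, so $\mathrm{gr}_1W$ is $(2n{+}2d)$-dimensional, one short of the natural module.) The paper's proof sidesteps all of this: by equivariance, the image of each of the six highest weight vectors of $\mg\otimes\mg$ (Lemma \ref{maxvectors}) is either zero or a highest weight vector in $\mathrm{Diff}(\Lambda_{d|n})$ expressible in at most four oscillators, and a finite weight-space check — e.g. $[t_{n-1}\partial_{t_n},\partial_{t_n}^3\partial_{t_{n-1}}]=-\partial_{t_n}^4$, and an explicit one-parameter-family computation for weight $\delta_1+\delta_2$ — rules out the weights $3\delta_1+\delta_2$, $2\delta_1+2\delta_2$ and $\delta_1+\delta_2$. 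This works uniformly in the parity of $m$ and for all $M=m-2n$, with no appeal to complete reducibility, so it also replaces the heavy case-by-case tensor computations you sketch for $M\in\{0,1,2\}$.

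On the determination of $\mu$: your plan (evaluate the image of the total trace tensor, i.e.\ the quadratic Casimir, on the highest weight vector $1\in\Lambda_{d|n}$) is a valid route, and the value $M(M-1)/8$ you anticipate is indeed the correct one; but you leave the computation unexecuted, and it is precisely the step where odd/even normalization subtleties lurk. The paper avoids the computation altogether with a short indirect argument: the relation holds for \emph{some} constant $\mu$ once the three components are killed, and if $\mu\neq\tfrac14$ then Theorem \ref{quadJ2} would put $\mg$ inside $\cJ^2_\mu$, forcing the manifestly nontrivial representation $\mS_{m|2n}$ to be annihilated by $\mg$ — a contradiction. You could adopt this step verbatim; combined with replacing your filtration argument by the highest-weight-vector search in $\mathrm{Diff}(\Lambda_{d|n})$, it closes the gap.
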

\begin{proof}
The mapping $\mg\otimes\mg\to$Diff$(\mS_{m|2n})$ given by $U\otimes V\to D_UD_V$ is a $\mg$-module morphism, where the $\mg$ action on Diff$(\mS_{m|2n})$ is given by $V\cdot D_X=[D_V,D_X]$ for $V\in\mg$ and $X\in\otimes^2\mg$. 

Therefore if one of the subrepresentation of $\mg\otimes\mg$, which are all generated by a highest weight vector, is not in the kernel of this mapping, there has to be a highest weight vector inside Diff$(\mS_{m|2n})$ of the corresponding weight, which is constructed out of at most four generators $T_j$ or $\partial_{T_k}$. The six highest weights are given in Theorem \ref{decomposition}. For $4\delta_1$, $2\delta_1$ and $0$, such a highest weight vector is given by respectively $\partial_{t_n}^4$, $\partial_{t_n}^2$ and $1$.

For the other three highest weights, we can prove that such a highest weight vector does not exist. For $3\delta_1+\delta_2$, the only allowed vector is $\partial_{t_n}^3\partial_{t_{n-1}}$, which is not a highest weight vector since $[t_{n-1}\partial_{t_n},\partial_{t_n}^3\partial_{t_{n-1}}]=-\partial_{t_n}^4$. For $2\delta_1+2\delta_2$ the only allowed one is $\partial_{t_n}^2\partial_{t_{n-1}}^2$ which is not a highest weight vector for the same reason. The allowed vectors with weight $\delta_1+\delta_2$ are given by
\begin{eqnarray*}
v&=&\left(a_0+\sum_{j=1}^na_jt_j\partial_{t_j}+\sum_{j=1}^{n+d}a_{j+n}\theta_j\partial_{\theta_j}\right)\partial_{t_{n-1}}\partial_{t_n},
\end{eqnarray*}
for arbitrary constants $a_s$ for $s=0,\cdots,n+d$. Again, we calculate
\begin{eqnarray*}
[t_{n-1}\partial_{t_n}, v]&=&-\left(a_0+\sum_{j=1}^na_jt_j\partial_{t_j}+\sum_{j=1}^{n+d}a_{j+n}\theta_j\partial_{\theta_j}\right)\partial_{t_n}^2+a_n t_{n-1}\partial_{t_{n-1}}\partial_{t_n}^2
\end{eqnarray*}
which can only be zero if $a_s=0$ or $s=0,\cdots,n+d$.

It then follows easily that
\begin{eqnarray*}
D_UD_V&=&D_{U\topb V}+\frac{1}{2}D_{[U,V]}+\mu D_{\langle U,V\rangle}
\end{eqnarray*}
must hold for some constant $\mu$. Now if $\mu\not=\frac{1}{4}$ would hold, Theorem \ref{quadJ2} would imply that the representation $\mS_{m|2n}$ would be trivial, which is not the case.
\end{proof}
We note that the terms appearing in the right hand side are non-zero which can be checked in a straightforward calculation, but also follows from the classical case of the symplectic spinors.

\begin{theorem}
\label{JosephKostant}
The annihilator ideal in the universal enveloping algebra of the $\mathfrak{spo}(2n|m)$ representation $\mS_{m|2n}$ is equal to the Joseph-like ideal $\mathfrak{J}^2$. If $m$ is even this statement also holds for the two non-isomorphic components of the representation $\mS_{m|2n}$. In particular this implies that $ \cU(\mg)/\mathfrak{J}^2$ has infinite dimension. 
\end{theorem}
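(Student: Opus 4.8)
The plan is to prove the equality $\mathrm{Ann}(\mS_{m|2n}) = \mathfrak{J}^2$ by two inclusions, identifying the annihilator ideal by means of the characterization in Theorem \ref{Garresult2}. First I would establish $\mathfrak{J}^2 \subseteq \mathrm{Ann}(\mS_{m|2n})$. The assignment $X \mapsto D_X$ is an algebra homomorphism $\cU(\mg) \to \mathrm{Diff}(\Lambda_{d|n})$, and the preceding theorem (with $\mu = \tfrac14$) says precisely that it sends each generator $X \otimes Y - X\topb Y - \tfrac12[X,Y] - \tfrac14\langle X,Y\rangle$ of $\mathfrak{J}^2$ to zero. Since $\mathfrak{J}^2$ is the two-sided ideal generated by these elements and $D$ is an algebra homomorphism, the whole of $\mathfrak{J}^2$ lies in $\ker D = \mathrm{Ann}(\mS_{m|2n})$.

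For the reverse inclusion I would verify the two hypotheses of Theorem \ref{Garresult2} for $\mathfrak{K} = \mathrm{Ann}(\mS_{m|2n})$. The quotient $\cU(\mg)/\mathrm{Ann}(\mS_{m|2n})$ is isomorphic to the image of $D$, which contains the linearly independent operators $(\partial_{t_n}^2)^k = \partial_{t_n}^{2k}$ for all $k$ (note $\partial_{t_n}^2 \in \mg$ in both the even and odd case), so $\mathrm{Ann}(\mS_{m|2n})$ has infinite codimension. For the degree-two condition, the operator identity of the preceding theorem shows that for all $X,Y \in \mg$ the element $X\cdot Y - X\topb Y - \tfrac12[X,Y] - \tfrac14\langle X,Y\rangle$ lies in $\mathrm{Ann}(\mS_{m|2n})$, and its leading symbol in $\odot^2\mg$ is $X\odot Y - X\topb Y$, the projection of $X\odot Y$ onto the complement of $\mg\topb\mg$ in $\mg\odot\mg$. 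As $X,Y$ range over $\mg$ these span that complement, so the complement of $\mg\topb\mg$ is contained in $gr(\mathrm{Ann}(\mS_{m|2n})) \cap \odot^2\mg$. Since the quotient of $\odot^2\mg$ by this complement is isomorphic to the irreducible module $\mg\topb\mg$, any $\mg$-submodule of $\odot^2\mg$ containing the complement is either equal to it or to all of $\odot^2\mg$. The latter is excluded because the highest weight vector of $\mg\topb\mg$ is sent by $D$ to the operator $\partial_{t_n}^4$, of strictly higher differential order than the image of any element of lower filtration degree, so no element of $\mathrm{Ann}(\mS_{m|2n})$ has it as leading symbol. Hence $gr(\mathrm{Ann}(\mS_{m|2n})) \cap \odot^2\mg$ equals the complement of $\mg\topb\mg$, the required decomposition $(gr(\mathrm{Ann}(\mS_{m|2n})) \cap \odot^2\mg) \oplus \mg\topb\mg = \odot^2\mg$ holds, and Theorem \ref{Garresult2} gives $\mathrm{Ann}(\mS_{m|2n}) = \mathfrak{J}^2$. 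The infinite codimension just established yields the final assertion that $\cU(\mg)/\mathfrak{J}^2$ is infinite dimensional.

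For even $m$, the space $\mS_{m|2n} = \Lambda_{d|n}$ splits into the two non-isomorphic irreducible summands $L^{2d|2n}_{\omega_d-\frac12\nu_n}$ and $L^{2d|2n}_{\omega_{d-1}-\frac12\nu_n}$, and I would run the same argument on each. The operator identity holds on all of $\Lambda_{d|n}$, hence on each summand, so $\mathfrak{J}^2$ is contained in both annihilators; each summand still contains all powers of $t_n$, so $\partial_{t_n}^4$ acts nontrivially on it and the restricted image of $D$ remains infinite-dimensional, whence the infinite-codimension and degree-two computations go through verbatim and Theorem \ref{Garresult2} again gives each annihilator equal to $\mathfrak{J}^2$. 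The main obstacle is precisely the degree-two symbol computation, and within it the claim that $\mg\topb\mg$ escapes $gr(\mathrm{Ann}(\mS_{m|2n}))$: this rests on exhibiting an element of $\mg\topb\mg$ that $D$ maps to a genuine, non-lower-order operator, supplied by the $\partial_{t_n}^4$ computation of the preceding theorem; everything else reduces to filtration bookkeeping together with the already-proven characterization.
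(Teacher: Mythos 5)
Your proposal is correct and takes essentially the same route as the paper, whose proof of Theorem \ref{JosephKostant} is declared identical to that of Theorem \ref{idealrep}: containment $\mathfrak{J}^2\subseteq\mathrm{Ann}(\mS_{m|2n})$ from the operator identity with $\mu=\tfrac14$, and equality from the characterization in Theorem \ref{Garresult2}; your explicit verification of the infinite-codimension and degree-two graded hypotheses (including ruling out $\mg\topb\mg\subset gr(\mathrm{Ann})$ via the genuinely fourth-order operator $\partial_{t_n}^4$) merely fills in what the paper leaves implicit. One harmless slip: for even $m$ each irreducible summand of $\Lambda_{d|n}$ contains only the even or only the odd powers of $t_n$, not all of them, but your argument survives since $\partial_{t_n}^{2k}$ and $\partial_{t_n}^4$ still act nontrivially and with linearly independent images on each summand.
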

\begin{proof}
The proof is identical to that of Theorem \ref{idealrep}.
\end{proof}

\section{Symmetry algebra of super Laplace operator}
\label{SymmLapl}

Apart from the interest in pure algebraic results for the theory of Lie superalgebras, the main
motivation for treating the present problems is harmonic analysis on Lie supergroups. One basic 
question here aims towards understanding the kernel of the super Laplace operator $\Delta$ on 
$\mR^{m-2|2n}$ as a module over $\mathfrak{osp}(m|2n)$, see Subsection \ref{secreposp}. In the core of the problem lies the structure of the algebra of symmetry operators of $\Delta$, preserving the representation space of harmonic functions. For example, in the classical case, the commuting 
pairs of second order symmetries correspond to separation of variables for the Laplace operator and lead to 
classical coordinate systems and special function theory for orthogonal groups.   

The symmetries of the Laplace operator on $\mR^{m-2}$ are studied in \cite{MR2180410}. It follows easily that the symbol of a symmetry of the Laplace operator corresponds to a conformal Killing tensor field. The symmetries of first order generate $\mathfrak{so}(m)$. The extension of this realization of $\mathfrak{so}(m)$ to the universal enveloping algebra yields higher order symmetries. Then it can be proved that this procedure gives a higher symmetry corresponding to each conformal Killing tensor field. The kernel of this realization of $\cU(\mathfrak{so}(m))$ is exactly the Joseph ideal. This implies that there is an isomorphism between the space of symmetries of the Laplace operator on $\mR^{m-2}$, the conformal Killing tensor fields on $\mR^{m-2}$ and the quotient $\cU(\mathfrak{so}(m))/\cJ$ with $\cJ$ the Joseph ideal. In this section we discuss the corresponding statements for the super Laplace operator on $\mR^{m-2|2n}$. 

As in the classical case, a symmetry of the super Laplace operator is a differential operator $D$ which satisfies $\Delta D=\delta \Delta$ for some other differential operator $\delta$. Trivial symmetries are the ones of the form $T\Delta$ for some differential operator $T$. When we consider the algebra of symmetries, it is understood to be quotiented with respect to these trivial symmetries, so we compose equivalence classes of symmetries.

Let us consider the ambient superspace $\mR^{m|2n}$ with orthosymplectic metric $g_{ab}$ of signature
$m-1,1|2n$. The projectivization of the space ${\fam2 C}_s$ associated to the super quadric $g^{ab}x_ax_b=0$ is
the supersphere $\mS^{m-2|2n}$, regarded as a superconformal compactification of $\mR^{m-2|2n}$. 

The association $\mD_V\to\cD_V$ in equation \eqref{cDX} can be rewritten by introducing $\mC^{m|2n}$-valued functions,
\begin{displaymath}
{\Phi}_a =
\left( \begin{array}{c}
1\\
-R^2/2\\
X_i
\end{array} \right)
\,\, ,\,\,
{\Psi}^{j}_a =
\left( \begin{array}{c}
0\\
-X^j\\
\delta_{i}^j
\end{array} \right),
\end{displaymath}
which should be interpreted as $\Phi_1=1$, $\Phi_2=-R^2/2$ and $\Phi_{i+2}=X_i$ for $i=1,\cdots,m-2+2n$ and likewise for $\Psi^{j}_{a}$. It can then be checked that for $V^{ab}\in\osp$,
\begin{eqnarray}
\label{mappingfortensors}
\cD_V&=&\sum_{a,b=1}^{m+2n}\Phi_aV^{ab}\Psi^j_{b}\partial_{X^j}+V_0
\end{eqnarray}
holds for a differential operator $V_0$ of order zero, with $\cD_V$ defined in equation \eqref{cDX}. It can be proved directly that this realization of $\osp$ yields all non-trivial symmetries of degree 1 of the super Laplace operator. Composing these symmetries leads to higher order symmetries. Corollary \ref{cDcD} implies that this yields an algebra of symmetries isomorphic to the quotient of $\cU(\mg)$ with respect to an ideal containing the Joseph ideal $\mathfrak{J}^1$. Identically to Theorem \ref{idealrep} it then follows that this algebra is given by $\cU(\mg)/\mathfrak{J}^1$ if $m-2n>2$ and this would hold also for $m-2n\le 0$ in case equation \eqref{herformCartan} could be proved for $V=\mg=\osp$ for those dimensions.

In order to address the question whether this yields all higher symmetries we introduce conformal Killing tensor fields on superspace. An immediate extension of the classical definition is given underneath.

\begin{definition}
\label{defconfKill}
If $m-2-2n\not\in-2\mN$, a superconformal Killing tensor field of valence $r$ on $\mR^{m-2|2n}$ is a symmetric trace-free 
tensor field 
\[\varphi^{jk\dots l}\in C^\infty (\mR^{m-2|2n},\odot^r_0 \mC^{m-2|2n}),\] satisfying
\begin{eqnarray}\label{sckillingeq}
\partial^{(i}\varphi^{jk\dots l)_0}=0,
\end{eqnarray}
where the subscript $0$ always denotes the trace free part of a given tensor field.
\end{definition}
As was discussed in Section \ref{preli}, in \cite{OSpHarm} it was proved that the $\mathfrak{osp}(m-2|2n)$-representations $\odot^r_0 \mC^{m-2|2n}$ are irreducible and equal to $\circledcirc^r\mC^{m-2|2n}$ if $m-2n-2\not\in-2\mN$.

As in the classical case we aim to identify conformal Killing tensor fields with symbols of symmetries of the Laplace operator. The reason why traceless tensors fields are considered is because metric terms in the symbol lead to a Laplace operator, thus to a trivial symmetry. However in superspace when $m-2n-2\in-2\mN$ holds, $\odot^r \mC^{m-2|2n}$ does no longer decompose into traceless tensors and tensors containing a metric part, see the end of Section \ref{preli}. In particular, for some values of $r$, traceless tensors can contain metric terms. Therefore the quotient of $\odot^r \mC^{m-2|2n}$ with respect to tensors containing a metric term is not irreducible, but still indecomposable. As discussed at the end of Section \ref{preli}, $\odot^r_0 \mC^{m-2|2n}$ is also not isomorphic to this quotient $\odot^r \mC^{m-2|2n}/\odot^{r-2} \mC^{m-2|2n}$ for these cases, so it is important to adjust Definition \ref{defconfKill}. The reason why only the traceless part of $\partial^{(i}\varphi^{jk\dots l)}$ in Definition \ref{defconfKill} is required to be zero, is again that the metric part leads to a Laplace operator. The proper definition of superconformal Killing tensor fields therefore is given in the following definition.
\begin{definition}
\label{propdefKill}
A superconformal Killing tensor field of valence $r$ on $\mR^{m-2|2n}$ is a symmetric
tensor field 
\[\varphi^{jk\dots l}\in C^\infty (\mR^{m-2|2n},\odot^r \mC^{m-2|2n}/\odot^{r-2} \mC^{m-2|2n}),\]  where $\odot^{r-2} \mC^{m-2|2n}$ is imbedded in $\odot^{r} \mC^{m-2|2n}$ by multiplying with the metric and symmetrizing, satisfying
\begin{eqnarray}\label{sckillingeq}
\partial^{(i}\varphi^{jk\dots l)}=g^{(ij}\lambda^{k\dots l)},
\end{eqnarray}
for some tensor field $\lambda$. Denote by ${\fam2 A}_r(\mR^{m-2|2n})$ the vector space of superconformal Killing tensor fields on 
$\mR^{m-2|2n}$.
\end{definition}
If $m-2n\not\in2-2\mN$ this is identical to Definition \ref{defconfKill}.

It follows from a straightforward calculation that the symbol of a symmetry of the super Laplace operator is a superconformal Killing tensor field. As in the classical case, the question of surjectivity of the mapping extended from equation \eqref{mappingfortensors} to $\otimes\mg$ or $\cU(\mg)$ with image in the space of symmetries of the Laplace operator, can be posed in a graded way. Therefore, we only look at the highest order term of the symmetry, which is identified with its symbol. Thus we obtain a mapping $\Phi:\odot^k\mg\to C^\infty (\mR^{m-2|2n},\odot^k \mC^{m-2|2n})$
\begin{eqnarray}
\label{prekillingmap}
V^{bq\, cr\, \dots \, ds}\mapsto  \varphi^{jk\dots l}:={\Phi}_b{\Phi}_c\dots {\Phi}_dV^{bq\, cr\, \dots \, ds}
{\Psi}^j_q{\Psi}^k_r\dots {\Psi}^l_{s}.
\end{eqnarray}

Let us first consider the case $m-2n>2$. From the considerations in Lemma \ref{DUDV} it follows that if we take the quotient with respect to $\odot^{k-2} \mC^{m-2|2n}$ (considering only traceless tensors) on the right-hand side, then everything except the Cartan product $\circledcirc^k\mg$ is inside the kernel of the mapping induced by $\Phi$. This is well-defined, since the Cartan product has a complement representation in that case, see Theorem \ref{Cartanosp}. The question whether the algebra of symmetries is $\cU(\mg)/\mathfrak{J}^1$ and whether for each conformal Killing tensor field there is a symmetry with such a symbol, is therefore reduced to the question whether the mapping
\begin{eqnarray}\label{killingmap}
& & \circledcirc^r\mathfrak{g}\to {\fam2 A}_r(\mR^{m-2|2n}),
\nonumber \\   
& & V^{bq\, cr\, \dots \, ds}\mapsto  \varphi^{jk\dots l}:={\Phi}_b{\Phi}_c\dots {\Phi}_dV^{bq\, cr\, \dots \, ds}
{\Psi}^j_q{\Psi}^k_r\dots {\Psi}^l_{s}
\end{eqnarray}
is surjective.

In the classical case, the BGG
resolution \cite{lepowsky} allows to conclude that the map \eqref{killingmap} is surjective. This follows from the fact that the differential operators in Definition \ref{propdefKill} are exactly the first differential operators in the BGG resolution corresponding to the representation $\circledcirc^r\mathfrak{so}(m)$. It is 
generally believed that BGG resolutions do not exist for all finite-dimensional $\mathfrak{osp}(m|2n)$-modules. 
The reason is that finite-dimensional $\mathfrak{osp}(m|2n)$-modules correspond to rather complicated modules 
on the Lie algebra 
side according to the super duality principle, which connects parabolic categories ${\fam2 O}$ 
for the orthosymplectic Lie 
superalgebras and classical Lie algebras of BCD types, see \cite{ChLW}. On the other hand, according to the same 
super duality principle, BGG 
resolutions exist for oscillator modules of $\mathfrak{osp}(m|2n)$, see the last paragraph of Section 1.4. in \cite{ChLW}.

We leave open the surjectivity question for the representation of $\mathfrak{osp}(m|2n)$ corresponding to
super conformal Killing tensor fields. Let us remark that the problem might be geometrically 
resolved by constructing a prolongation of the overdetermined system of the superconformal 
Killing tensor differential operator, \eqref{sckillingeq}. In the case the surjectivity
hypothesis is fulfilled, the isomorphism between symmetries of the Laplace operator, conformal Killing tensor fields and the quotient $\cU(\mg)/\mathfrak{J}^1$ follows.

Finally we focus on the case $m-2n\le 0$. Even though we did not define the Joseph ideal for $m-2n\in\{1,2\}$ we can include them in this part because of the graded version of the isomorphism in question. We start again from the map \eqref{prekillingmap}, which attaches the symbol of a symmetry to each element in $\odot^k\mg$, but now assume $m-2n\le 2$. After quotienting out trivial symmetries, the question is whether the mapping
\begin{eqnarray}
\label{actualKillmap}
& & \otimes^r\mathfrak{g}/I_r\to {\fam2 A}_r(\mR^{m-2|2n}),
\nonumber \\   
& & V^{bq\, cr\, \dots \, ds}\mapsto  \varphi^{jk\dots l}:={\Phi}_b{\Phi}_c\dots {\Phi}_dV^{bq\, cr\, \dots \, ds}
{\Psi}^j_q{\Psi}^k_r\dots {\Psi}^l_{s}
\end{eqnarray}
is surjective with $I_r$ as defined in section \ref{secCartan}. But contrary to the case $m-2n>2$, it is not yet proved that this mapping \eqref{actualKillmap} is injective, injectivity would follow from equation \eqref{herformCartan} if this could be proven.

We conclude this section with some observations about the possibility that this mapping could be an isomorphism even for $m-2n\le 2$.

It is expected that $\otimes^r\mg/I_r$ will not be irreducible and not isomorphic to $\beta_r=\cU(\mg)\cdot X_{\epsilon_1+\epsilon_2}^{\otimes r}$, see Theorem \ref{Cartanosp}.
 
First let us consider the cases $m-2n\in 2-2\mN$. Then the embedding of the space $\odot^{r-2}\mC^{m-2|2n}$ into $\odot^r\mC^{m-2|2n}$ does not always have a complement representation, which leads to the fact that $\odot^r\mC^{m-2|2n}/\odot^{r-2}\mC^{m-2|2n}$ is not irreducible (but still indecomposable). This non-complete decomposability will be inherited by the conformal Killing tensor fields on the right-hand side of equation \eqref{actualKillmap}. The same thing seems to happen on the left-hand side as well, as we will point out now. This can be seen for the case $m-2n=2$ from the proof of Lemma \ref{DUDV}. Take $X\in K^{m|2n}_{2\epsilon_1}\subset\otimes^2\mg$, then $\cD_X$ has a non-zero highest order term, which is the symbol of a symmetry. However, this highest order term is proportional to the Laplace operator, so cancels out after taking the quotient with respect to trivial symmetries. From the proof of Lemma \ref{subA} we find that $m-2n=2$ is exactly the case when $K_{2\epsilon_1}^{m|2n}$ does not have a complement representation and $K_{2\epsilon_1}^{m|2n}$ is inside the representation generated by the highest weight vector in $\mg\otimes\mg$, see Theorem \ref{cases012}. So the procedure of quotienting out trivial symmetries seems to lead to non-complete reducibility of the left-hand side of equation \eqref{actualKillmap} and the right-hand side for the same values of the dimensions and degrees.

In case $m-2n\in 1-2\mN$, quotienting out the metric terms on the right-hand side does not lead to reducible representations, since Definition \ref{defconfKill} can be used. Still the left-hand side is expected to be reducible, so in order to have an isomorphism \eqref{actualKillmap} and therefore non-complete reducibility on the right-hand side, another mechanism needs to be found (compared to the case $m-2n\in 2-2\mN$). For $M=1$, the clue is again given by the calculation in the proof of Lemma \ref{DUDV}. From that proof it is clear that $X\in K_0^{m|2n}$ is already mapped to zero in the mapping in equation \eqref{prekillingmap} (since $\cD_X$ has zero highest order term), before quotienting out metric terms as in \eqref{actualKillmap}. The case $M=1$ exactly corresponds to the case where $K_0^{m|2n}$ does not have a complement representation and is a subrepresentation of the representation generated by the highest weight vector in $\mg\otimes\mg$, see the proof of Theorem \ref{cases012}. So in this case, the non-complete reducibility on the right-hand side of equation \eqref{actualKillmap} (which is needed in order to be able to have an isomorphism) now can come from the definition of symmetries of the Laplace operator, i.e. equation \eqref{sckillingeq}.


\begin{thebibliography}{PTW02} 

\bibitem[BZ91]{MR1108044}
B. Binegar, R. Zierau, 
\newblock{Unitarization of a singular representation of SO(p,q),}
\newblock{Comm. Math. Phys. {\bf138} (1991), 245--258. }

\bibitem[BJo98]{MR1631302}
A. Braverman, A. Joseph,
\newblock{The minimal realization from deformation theory,}
\newblock{J. Algebra {\bf205} (1998), 13--36. }

\bibitem[CS09]{cs}
A. \v{C}ap, J. Slov\'ak,
\newblock{Parabolic Geometries I: Background and General Theory,}
\newblock{Mathematical Surveys and Monographs, vol. 154, ISBN-13: 978-0-8218-2681-2, 2009.}

\bibitem[CLW11]{ChLW}
S.-J. Cheng, N. Lam, W. Wang,
\newblock{Super duality and irreducible characters of ortho-symplectic Lie superalgebras},   
\newblock{Inventiones Mathematicae, {\bf183}, pp. 189-224, 2011}.



\bibitem[Co12a]{OSpHarm}
{K. Coulembier,}
\newblock{The orthosymplectic superalgebra in harmonic analysis,}
\newblock{accepted in J. Lie Theory, arXiv:1202.0668}

\bibitem[Co12b]{Tensor}
K. Coulembier,
\newblock{On a class of tensor product representations for the orthosymplectic superalgebra,}
\newblock{accepted in J. Pure Appl. Algebra, 10.1016/j.jpaa.2012.09.009}

\bibitem[DS07]{MR2344451}
H. De Bie, F. Sommen,
\newblock{Spherical harmonics and integration in superspace,}
\newblock{J. Phys. A {\bf40} (2007), 7193--7212. }

\bibitem[DM99]{MR1701597}
P. Deligne, J.W. Morgan,
\newblock{Notes on supersymmetry (following Joseph Bernstein).}
\newblock{Quantum fields and strings: a course for mathematicians, Amer. Math. Soc., Providence, RI, 1999.}

\bibitem[Ea04]{MR2130630}
M. Eastwood,
\newblock{The Cartan product,}
\newblock{Bull. Belg. Math. Soc. Simon Stevin {\bf11} (2004), 641--651. }

\bibitem[Ea05]{MR2180410}
M. Eastwood, 
\newblock{Higher symmetries of the Laplacian,}
\newblock{Ann. of Math. (2) {\bf161} (2005), 1645--1665.}


\bibitem[ESS07]{MR2369839}
M. Eastwood, P. Somberg, V. Sou\v{c}ek, 
\newblock{Special tensors in the deformation theory of quadratic algebras for the classical Lie algebras},
\newblock{J. Geom. Phys. {\bf57} (2007), 2539--2546.}

\bibitem[FG85]{fg}
C. Fefferman, R. Graham, 
\newblock{Conformal invariants}, 
\newblock{\'Elie Cartan et les Mathématiques d'Aujourd'hui, Asterisque (1985), 95--116}.

\bibitem[FSS00]{MR1773773}
{L. Frappat, A. Sciarrino, P. Sorba},
\newblock {\it Dictionary on {L}ie algebras and superalgebras},
\newblock Academic Press Inc., San Diego, CA, 2000.

\bibitem[GSa04]{MR2106228}
W.T. Gan, G. Savin,
Uniqueness of Joseph ideal.
\newblock{Math. Res. Lett. {\bf11} (2004), 589--597.}

\bibitem[Ga82]{Garfinkle}
D. Garfinkle,
\newblock{A new construction of the Joseph ideal,}
\newblock{Ph.D. Thesis, M.I.T. (1982)}

\bibitem[GSi12]{Higherpowers}
A.R. Gover, J. \v{S}ilhan, 
\newblock{Higher symmetries of the conformal powers of the Laplacian on conformally flat manifolds,}
\newblock{ J. Math. Phys. {\bf53} (2012), 032301, 26 pp. }       

\bibitem[HKM11]{Hilgert2}
J. Hilgert, T. Kobayashi, J. M\" ollers,
\newblock{Minimal representations via Bessel operators,}
\newblock{arXiv:1106.3621}

\bibitem[JG79]{MR0546778}
P.D. Jarvis, H.S. Green,
\newblock{Casimir invariants and characteristic identities for generators of the general linear, special linear and orthosymplectic graded Lie algebras,}
\newblock{J. Math. Phys. {\bf20} (1979), 2115--2122. }

\bibitem[Jo74]{MR0342049}
A. Joseph,
\newblock{Minimal realizations and spectrum generating algebras,}
\newblock{Comm. Math. Phys. {\bf36} (1974), 325--338. }
  
\bibitem[Ka78]{MR051963}
{V. Kac,}
\newblock{\it Representations of classical Lie superalgebras}, 
\newblock{Lecture Notes in Math.} {\bf676}, Springer, Berlin, 1978.

\bibitem[KaW94]{MR1327543}
V. Kac, M. Wakimoto,
\newblock{Integrable highest weight modules over affine superalgebras and number theory,}
\newblock{Lie theory and geometry, 415--456, Progr. Math., {\bf123}, BirkhŠuser Boston, Boston, MA, 1994. }

\bibitem[K\O03]{MR2020550}
T. Kobayashi, B. \O rsted, 
\newblock{Analysis on the minimal representation of $O(p,q)$. I. Realization via conformal geometry,}
\newblock{Adv. Math. {\bf180} (2003), 486--512. }

\bibitem[Ko74]{MR0400304}
B. Kostant,
\newblock{Symplectic spinors. Symposia Mathematica, Vol. XIV (Convegno di Geometria Simplettica e Fisica Matematica, INDAM, Rome, 1973), pp. 139--152. Academic Press, London, 1974. }

\bibitem[Le77]{lepowsky}
J. Lepowsky, 
\newblock{A generalization of the Bernstein-Gelfand-Gelfand resolution}, 
\newblock{J. Algebra {\bf49} (1977), 496--511.}

\bibitem[LSS88]{MR0953165}
T. Levasseur, S.P. Smith, J.T. Stafford,
\newblock{The minimal nilpotent orbit, the Joseph ideal, and differential operators,}
\newblock{J. Algebra {\bf116} (1988), 480--501.} 


\bibitem[NS11]{Salmasian}
K.-H. Neeb, H. Salmasian,
\newblock{Lie supergroups, unitary representations, and invariant cones,}
\newblock{upersymmetry in mathematics and physics, 195--239,
Lecture Notes in Math., 2027, Springer, Heidelberg, 2011. }


\bibitem[Mu99]{MR1616633}
I.M. Musson,
\newblock{Some Lie superalgebras associated to the Weyl algebras,}
\newblock{Proc. Amer. Math. Soc. {\bf127} (1999), 2821--2827. }

\bibitem[Mu06]{MR2264065}
I.M. Musson, 
\newblock{Lie superalgebras, Clifford algebras, induced modules and nilpotent orbits,}
\newblock{Adv. Math. {\bf207} (2006), 39--72. }

\bibitem[Ni91]{MR1132090}
K. Nishiyama,
Decomposing oscillator representations of $\mathfrak{osp}(2n|n;\mR)$ by a super dual pair $\mathfrak{osp}(2|1;\mR)\times\mathfrak{so}(n)$.
\newblock{Compositio Math. {\bf80} (1991), 137--149. }

\bibitem[SNR77]{MR0424886}
M. Scheunert, W. Nahm, V. Rittenberg,
Graded Lie algebras: Generalization of Hermitian representations.
\newblock{J. Math. Phys. {\bf18} (1977),146--154. }

\bibitem[Sh62]{MR0137504}
D. Shale,
Linear symmetries of free boson fields.
\newblock{Trans. Amer. Math. Soc. {\bf103} (1962), 149--167.}

\bibitem[So07]{som}
P. Somberg, 
\newblock{Deformations of quadratic algebras, the Joseph ideal for classical Lie algebras, and special tensors}, 
\newblock{Symmetries and overdetermined systems of partial differential equations, Springer, New York, 
The IMA volumes in Math. and its Appl., USA (2007) p. 527-537.}


\end{thebibliography}
\end{document}